\documentclass[10pt]{amsart}
\usepackage{amssymb}
\usepackage{amsmath}
\usepackage{amsfonts, color}
\DeclareMathOperator*{\slim}{s-lim}
\usepackage{graphicx}
\usepackage{geometry}
\usepackage{mathrsfs,amssymb, slashed, cite}

\usepackage{hyperref}

\usepackage{cleveref}
\renewcommand{\H}{\mathcal{H}}
\newcommand{\dd}{\mathop{}\!\mathrm{d}}

\newcommand{\La}{\mathcal{L}_a}
\newcommand{\la}{\lambda}
\newcommand{\R}{\mathbb{R}}
\newcommand{\C}{\mathbb{C}}

\let\Re=\undefined\DeclareMathOperator*{\Re}{Re}
\let\Im=\undefined\DeclareMathOperator*{\Im}{Im}

\newcommand{\supp}{\text{supp}}

\renewcommand{\L}{\mathcal{L}}

\newcommand{\Z}{\mathbb{Z}}

\newtheorem{definition}{Definition}

\newtheorem{lemma}{Lemma}[section]

\newtheorem{proposition}{Proposition}[section]
\newtheorem{remark}{Remark}[section]

\newtheorem{theorem}{Theorem}[section]
\numberwithin{equation}{section}

\begin{document}
	\author{Mingming Deng}
	\address{Mingming Deng
		\newline \indent School of Mathematics and Statistics, Zhengzhou University,
		Zhengzhou, Henan, 450001, China}
	\email{dengmingming@zzu.edu.cn}
	\author{Yilin Song}
	\address{Yilin Song
		\newline \indent Institute of Applied Physics and Computational Mathematics,
		Beijing 100088,\ P. R. China}
	\email{songyilin21@gscaep.ac.cn}
\author{Ruixiao Zhang}
\address{Ruixiao Zhang
\newline\indent Department of Mathematics, \hfill\newline Northeastern University,
	\hfill\newline Shenyang, 110819,  People's Republic of China.}
\email{zhangrx@mail.neu.edu.cn}

	\title[Bilinear estimates for Schr\"odinger equation]{Bilinear estimates for Schr\"odinger equation with repulsive inverse-square potential and its application}
	
	\begin{abstract} \noindent
		In this paper, we establish the bilinear Strichartz estimates for the following linear Schr\"odinger equation with repulsive inverse-square potential $i\partial_tu+\La u=0$ where $\La=-\Delta+a|x|^{-2}$ with $a\geq0$ for non-radial data. Our proof relies on the physical space method (interaction Morawetz estimate). Such type of method was first introduced in Planchon-Vega [Ann. Sci. \'Ec. Norm. Sup\'er. (4) {\bf 42} (2009), 261--290] in flat case,  and Planchon-Tzvetkov-Visciglia [Rev. Mat. Iberoam,  \textbf{39}(2023), no. 4, 1405-1436] for harmonic oscillator.  Our bilinear estimate  extends results obtained in Camps [Ann. Inst. H. Poincar\'e Anal. Non Lin\'eaire {\bf 39} (2022), 1--58] and Pusateri-Soffer [Mem. Amer. Math. Soc. {\bf 299} (2024),  v+107 pp]  to the  critical case. 
		
		As an application, we establish the global well-posedness for cubic NLS with inverse-square potential
		\begin{equation*}
			iu_{t} +\La u = -|u|^2u,\ \ (t,x)\in \mathbb{R}\times\mathbb{R}^{3}.
		\end{equation*}
		for radial initial data $u_0\in H_a^s$ with $s>\frac{11}{13}$ and $a>0$ where $H_a^s$ is the Sobolev space adapted to the Schr\"odinger operator with inverse-square potential. The proof relies on Bourgain's high-low frequency decomposition argument and our bilinear estimates. This is the first result on the low regularity well-posedness for NLS with scaling-critical potential. 
	\end{abstract}
	
	\subjclass[2020]{Primary 35Q55; Secondary 42B37}

	\keywords{Bilinear Strichartz estimate, inverse square potential, Hankel transform, low regularity, Bourgain space.}

	\maketitle\tableofcontents
	
	\section{Introduction}
	\subsection{Statement of main results}
	In this article, we study the bilinear Strichartz estimates for the Schr\"odinger equation with inverse-square potential
	\begin{equation}\label{LS}
		\begin{cases}
			iu_{t}+\La u=0,\\
			u(0,x)=u_0(x), x\in\R^3.
		\end{cases}	
	\end{equation}
	where $ u : \mathbb{R} \times \mathbb{R}^{3} \to \mathbb{C}, \mathcal{L}_{a} = -\Delta + \frac{a}{|x|^{2}}, a\geq0$.
	We consider the self-adjoint operator $\mathcal{L}_a$ as the Friedrichs extension associated with the 
	quadratic form $f\rightarrow \int_{\mathbb{R}^{d}}|\nabla f(x)|^{2}+\frac{a}{|x|^{2}}|f(x)|^{2}dx$ over
	$C_0^\infty(\R^d\setminus\{0\})$. 
	Hardy's inequality then ensures the following equivalence of Sobolev norms
	\begin{align*}
		\|u\|_{\dot H^1(\R^d)}\sim \|u\|_{\dot H_a^1(\R^d)}=\|\sqrt{\mathcal{L}_a} u\|_{L^2}.
	\end{align*}
	When the inverse square potential is attractive ($a<0$), one can additionally impose $a>-\left(\frac{d-2}{2}\right)^2$ to ensure the non-negativity of $\La$, and the equivalence of Sobolev norms also holds in this scenario.
	
	Our main result is the following global-in-time bilinear Strichartz estimate
	\begin{theorem}\label{thm1}
		For $a\geq0$, there holds
		\begin{align}\label{bilinear-thm}
			\big\|e^{it\La}P_N^af\,e^{it\La}P_M^ag\big\|_{L_{t,x}^2(\R\times\R^3)}\leq CMN^{-\frac12}\|f\|_{L^2(\R^3)}\|g\|_{L^2(\R^3)},
		\end{align}
		where $M,N\in2^\Z$ with $M\leq N$ and $P_N^a$ denotes the Littlewood--Paley projection associated to $\La$. The constant may depend on $a$ and the fixed cutoffs, but not on $M,N,f,g$. No radiality or angular regularity is assumed, and no uniformity as $a\downarrow0$ is asserted.
	\end{theorem}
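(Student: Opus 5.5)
We follow the physical-space approach of Planchon--Vega: derive the bilinear bound from an interaction Morawetz identity for the tensor product of the two linear flows. Set $u=e^{it\La}P_N^af$ and $v=e^{it\La}P_M^ag$. The first step is the standard Morawetz computation for $\La$. For a real weight $\phi$ and a solution $w$ of $iw_t+\La w=0$, define the momentum $\mathcal{M}_\phi(t)=\Im\int \nabla\phi\cdot\nabla w\,\bar w\,dx$. Then
\begin{align*}
\frac{d}{dt}\mathcal{M}_\phi=2\int \nabla\bar w\,D^2\phi\,\nabla w\,dx-\frac12\int \Delta^2\phi\,|w|^2dx+2a\int \frac{x\cdot\nabla\phi}{|x|^4}|w|^2dx .
\end{align*}
When $\phi$ is radially increasing, the potential term is nonnegative because $a\ge0$. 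This is the only place where repulsivity enters.

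The second step applies this to the six-dimensional function $W(t,x,y)=u(t,x)v(t,y)$, which solves the Schr\"odinger equation with operator $\La^{(x)}+\La^{(y)}$. We take $\phi(x,y)=|x-y|$, or rather a variant adapted to a direction, as in Planchon--Vega. For the tensor potential $a|x|^{-2}+a|y|^{-2}$, the potential contribution becomes
\begin{align*}
2a\int\Big(\frac{x\cdot\nabla_x\phi}{|x|^4}+\frac{y\cdot\nabla_y\phi}{|y|^4}\Big)|W|^2 .
\end{align*}
This term is not sign-definite for $\phi=|x-y|$, and controlling it is the main obstacle. I would first try the trick of Planchon--Vega: work in the coordinates $(x-y,x+y)$, with the weight depending only on the component of $x-y$ along a fixed direction after averaging, and keep the bilaplacian term, which in 3D gives $-\Delta^2|z|\sim\delta_0(z)$. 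This yields
\begin{align*}
\int_{\R}\int_{\R^3}|u|^2|v|^2\,dx\,dt\lesssim \sup_t|\mathcal{M}(t)|+|\text{potential error}| .
\end{align*}
The bilinear quantity $\|uv\|_{L^2}^2$ comes out after a Galilean-type reduction. Since Galilean invariance fails for $\La$, we instead use the frequency-localized version: we replace $v$ by the low-frequency function and exploit the gradient part $\int|\nabla_x u\, v-u\nabla_y v|^2$ restricted to the diagonal. The boundary term satisfies
\begin{align*}
\sup_t|\mathcal{M}(t)|\lesssim \|u\|_{L^2}\|\nabla u\|_{L^2}\|v\|_{L^2}^2+\|u\|^2_{L^2}\|v\|_{L^2}\|\nabla v\|_{L^2}\lesssim N\|f\|^2\|g\|^2 .
\end{align*}
Here we use the norm equivalence $\|\nabla P_N^a f\|\sim N\|f\|$ from Hardy's inequality and the Littlewood--Paley theory for $\La$. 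The diagonal term controls $\int|\nabla(u\bar v)|^2$-type quantities, roughly $N^2\|uv\|_{L^2}^2$ when $M\ll N$, minus a commutator of size $M^2\|uv\|^2$. Dividing gives $\|uv\|^2_{L^2}\lesssim N^{-1}\|f\|^2\|g\|^2\cdot M^2$ after Bernstein in the low factor. Equivalently, we run the argument on $u$ and $\nabla v$-rescaled pieces, taking care that the constant is $MN^{-1/2}$.

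For the potential error, I would choose $\phi$ so that its gradient is monotone in $|x|$ and $|y|$ separately. Concretely, $\phi(x,y)=\langle x-y\rangle$ is smoothed, and the mixed terms $\frac{x\cdot(x-y)}{|x|^4|x-y|}$ are estimated by Hardy-type bounds: $\int |x|^{-3}|u|^2dx\lesssim$ the local smoothing norm of $u$. The known Kato smoothing estimate for $e^{it\La}$ gives $\||x|^{-1}|\nabla|^{1/2}u\|_{L^2_{t,x}}\lesssim\|f\|$, and a frequency-localized version gives $\int\!\!\int|x|^{-3}|u|^2\lesssim N^{-1}\cdot N\|f\|^2$ up to logarithmic issues. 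Combined with $\|v\|_{L^\infty_tL^2}$, this shows the error is $O(N\|f\|^2\|g\|^2)$, which is of the same order as the boundary term. This local smoothing input near the origin is where $a>0$ and the dependence of the constant on $a$ enter. It is also the step I expect to be most delicate. If the crude bound fails, the fallback is to split $u$ into the region $|x|\lesssim N^{-1}$, handled by Hardy's inequality, and its complement, where the potential is a perturbation of size $N^{-2}$ relative to $\Delta$.
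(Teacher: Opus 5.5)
Your overall architecture matches the paper's: an interaction Morawetz identity with a direction-adapted convex weight, boundary terms of size $N\|f\|_2^2\|g\|_2^2$, and the non-sign-definite force term bounded via $|x|^{-3}+|y|^{-3}$. That force term is handled by the endpoint weighted smoothing $\||x|^{-3/2}e^{it\La}h\|_{L^2_{t,x}}\lesssim\|\La^{1/4}h\|_2$, valid for $a>0$; it gives $N\|f\|_2^2$, not $N^{-1}\cdot N\|f\|_2^2$. The gap is in the two conversion steps that turn the Morawetz output into the $L^2$ bilinear bound. You assert both, and both rest on flat-space structure that $\La$ lacks.

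\textbf{First conversion: from the slab to the diagonal.} Your ``Bernstein in the low factor'' has no analogue for $\La$, since there is no partial Fourier transform in the transverse variables. The paper does the following.
\begin{itemize}
\item It uses weights $\rho_M(x_j-y_j)$ at scale $M^{-1}$, with $\|\rho_M'\|_\infty\le 1$ and Hessian $M\mathbf{1}_{\{|x_j-y_j|<M^{-1}\}}$. Summing over $j$ gives $M\int\iint_{|x-y|<M^{-1}}|\bar v\nabla u+u\nabla\bar v|^2\lesssim N$.
\item It then applies a local pointwise bound $|\phi(x)|^2\lesssim\int_{|x-y|<M^{-1}}(M^{-1}|\La\phi|^2+M^3|\phi|^2)\,\dd y$, proved by Kato's inequality and subharmonicity (this is where $a\ge0$ is used). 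This is applied to $\phi=v_M$.
\item The $\La v_M$ term forces a second run of the Morawetz estimate for the solution $\La v_M$. Together these give $\|v_M\nabla u_N\|_{L^2_{t,x}}^2\lesssim M^2N$.
\end{itemize}

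\textbf{Second conversion: from the gradient bound to the $L^2$ bound.} This is the more serious gap. Your step ``$\int|\nabla(u\bar v)|^2\approx N^2\|uv\|^2$ minus a commutator of size $M^2\|uv\|^2$'' is a reverse Bernstein inequality for a product. In the flat case it comes from the Fourier support of $\hat u*\hat{\bar v}$. For $\La$, a product of spectrally localized functions has no spectral localization, so this claim has no justification, and it is exactly the missing idea. The paper splits $u_Nv_M=\sum_K P_K^a(u_Nv_M)$ into two ranges.
\begin{itemize}
\item For $K>N$, the bound $\|P_K^ah\|\lesssim K^{-1}\|P_K^a\La^{1/2}h\|$ is legitimate.
\item For $K\le N$, it writes $u_N=N^{-2}\La\widetilde u_N$ and uses the identity $v\La\widetilde u-\widetilde u\La v=\operatorname{div}(\widetilde u\nabla v-v\nabla\widetilde u)$, in which the inverse-square potential cancels. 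Combined with $\|P^a_{\le N}\operatorname{div}X\|_2\lesssim N\|X\|_2$, this reduces the low-output part to the gradient bounds plus a harmless term $N^{-4}\|\widetilde u_N\La v_M\|^2\lesssim M^4N^{-3}$.
\end{itemize}
Without some such device to convert $\|\nabla(u_Nv_M)\|^2\lesssim M^2N$ into $\|u_Nv_M\|^2\lesssim M^2N^{-1}$, your argument does not reach the stated bound.
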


	\begin{remark}
		The estimate holds for arbitrary initial data for every $a\geq0$. When $a=0$, the interaction potential term vanishes identically. We notice that \eqref{bilinear-thm} is sharp in the sense that our estimate matches the bilinear Strichartz estimate in the free case, i.e. $a=0$. Compared to the previous work of \cite{PTV21,BPS}, our bilinear estimate is global-in-time since we have the global-in-time interaction Morawetz estimate, see Zhang-Zheng \cite{ZZ} for more details.  
	\end{remark}

	As an application of \eqref{bilinear-thm}, we adapt the high-low frequency decomposition technique   to our setting and show the low regularity global well-posedness  for cubic NLS with repulsive inverse-square potential in $\R\times\R^3$. 
	\begin{equation}\label{NLS}\tag{$\text{NLS}_a$}
		\left\{
		\begin{aligned}
			& i u_{t} + \mathcal{L}_{a} u =  -|u|^{2} u, \\
			& u(0,x) = u_{0} (x),
		\end{aligned}
		\right. \qquad (t, x) \in \R \times \R^d.
	\end{equation}

	We prove the following global well-posedness for \eqref{NLS} below the energy space $H_a^1$.
	\begin{theorem}\label{thm: nonlinearmain}
		Let $a>0$, the Cauchy problem \eqref{NLS} is {globally}  well-posed for the radial initial data in $H_a^{s}(\R^3)$ with $s>\frac{11}{13}$ in the sense that
		\begin{align*}
			u(t)=e^{it\La}u_0+v(t),\,\,\|v(t)\|_{H_a^1}\lesssim(1+|t|)^{\frac{2(1-s)}{13s-11}+\varepsilon},
		\end{align*}
		where $\varepsilon>0$.	
	\end{theorem}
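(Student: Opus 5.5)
The plan is to run Bourgain's high--low frequency decomposition, using the bilinear estimate \eqref{bilinear-thm} as the key smoothing input. Fix a large cutoff $N$, to be chosen in terms of the target time $T$. Split the data as $u_0=\phi_0+\psi_0$ with $\phi_0=P^a_{\le N}u_0$ and $\psi_0=P^a_{>N}u_0$. The spectral calculus for $\La$ then gives
\[
\|\phi_0\|_{H^1_a}\lesssim N^{1-s}\|u_0\|_{H^s_a},\qquad \|\psi_0\|_{H^\sigma_a}\lesssim N^{\sigma-s}\|u_0\|_{H^s_a}\quad(0\le\sigma\le s).
\]
By Hardy's inequality and the equivalence of Sobolev norms, the energy satisfies $E(\phi_0)\lesssim N^{2(1-s)}$. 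On a short interval $[0,\delta]$ we evolve the low part $v$ by \eqref{NLS} in $H^1_a$. Cubic NLS in $\R^3$ is $\dot H^{1/2}$-critical, so the $H^1_a$ local theory, built from Strichartz estimates for $e^{it\La}$, gives existence on a time scale $\delta\sim N^{-4(1-s)}$ (up to $N^{0+}$ losses) with good control of $v$ there. We evolve the high part linearly as $e^{it\La}\psi_0$. The remainder $w=u-v-e^{it\La}\psi_0$ then solves a difference equation with zero data. Its forcing consists of the cubic terms containing at least one factor $e^{it\La}\psi_0$: namely $v\bar v\psi$, $v\bar\psi\psi$, $\psi\bar\psi\psi$ and their conjugates.

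The core step is the smoothing estimate
\[
\|w\|_{L^\infty([0,\delta];H^1_a)}\lesssim N^{-\beta}\quad\text{for some }\beta=\beta(s)>0,
\]
even though $\psi$ itself lies only in $H^s_a$. To prove it I would work in Bourgain spaces $X^{\sigma,b}_a$ adapted to $\La$. These are defined through the Hankel (distorted Fourier) transform, which diagonalizes $\La$ on radial functions. This is precisely where radiality enters, since it makes the spectral variable one-dimensional and the modulation weights $\langle\tau-\lambda^2\rangle^b$ meaningful. A standard transference argument upgrades \eqref{bilinear-thm} to
\[
\|u_Nu_M\|_{L^2_{t,x}}\lesssim M N^{-1/2}\|u_N\|_{X^{0,\frac12+}_a}\|u_M\|_{X^{0,\frac12+}_a}.
\]
I would then estimate the trilinear Duhamel terms in $X^{1,-\frac12+}_a$ by duality. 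First, decompose all factors dyadically with $P^a_K$. Then pair the highest-frequency factor with a low one through the bilinear bound, put the remaining two factors in $L^4_{t,x}$ via Strichartz, and sum over the dyadic blocks. Moving $\La^{1/2}$ onto products requires a fractional Leibniz rule for $\La$. I would route this through the equivalence $\|\La^{\sigma/2}f\|_{L^p}\sim\|(-\Delta)^{\sigma/2}f\|_{L^p}$, which holds in a range of $(\sigma,p)$ for $a>0$. Because $e^{it\La}\psi_0$ is purely linear and frequency-localized above $N$, each $\psi$-factor loses $N^{-s}$ in $L^2$, while the bilinear estimate gains $N^{-1/2}$. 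The worst configuration should be $v\bar v\psi$ with output at frequency about $N$. This estimate is the main obstacle: it decides $\beta(s)$, and hence the final threshold, and the interplay with the restricted Leibniz/Sobolev equivalence for $\La$ is delicate.

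With $w$ controlled, the iteration proceeds as follows. At time $\delta$ we restart with low part $v(\delta)+w(\delta)\in H^1_a$ and high part $e^{i\delta\La}\psi_0$, whose $H^s_a$ size has not changed. The energy increment per step is
\[
|E(v(\delta)+w(\delta))-E(v(\delta))|\lesssim N^{1-s}\|w(\delta)\|_{H^1_a}+\text{(higher order)}\lesssim N^{1-s-\beta}.
\]
This uses energy conservation for $v$ and the $H^1_a$ bound on $v$. The iteration may therefore continue for $\sim N^{2(1-s)}/N^{1-s-\beta}=N^{1-s+\beta}$ steps before the energy doubles, which reaches time $T\sim N^{1-s+\beta}\delta\sim N^{\beta-3(1-s)}$. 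Requiring that this exponent be positive, and optimizing the choice of $N$ as a function of $T$, should produce exactly the condition $s>\frac{11}{13}$. It should also give the bound $\|v(t)\|_{H^1_a}\lesssim N^{1-s}\sim(1+|t|)^{\frac{2(1-s)}{13s-11}+\varepsilon}$ for the nonlinear part of the decomposition $u=e^{it\La}u_0+v$. Finally, I would verify that the bookkeeping of the linear part $e^{it\La}\psi_0$ across steps reproduces the stated decomposition. Uniqueness and continuous dependence follow from the same $X^{s,b}_a$ estimates.
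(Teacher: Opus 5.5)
Your overall scheme matches the paper's. You split $u_0$ with $P^a_{\le N}$, evolve the low part in $H^1_a$ on short intervals, evolve the high part linearly, transfer \eqref{bilinear-thm} into $X^{s,b}_a$, bound the remainder $w$ in $H^1_a$, and iterate with energy increment $N^{1-s}\|w\|_{H^1_a}$.

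\textbf{The gap is the smoothing estimate.} You leave it open, and the route you sketch for it loses too much. In the worst term $\int_I\int_{\R^3} v\,\bar v\,\nabla\psi_K\,\bar G$, both $\psi_K$ and the dual test function $G$ sit at the output frequency $K\gtrsim N$. Suppose you pair only $v\,\nabla\psi_K$ through the bilinear estimate and put $v$ and $G$ in $L^4_{t,x}$. Then $G$ costs $K^{1/4}$, because in $\R^3$ the $L^4_{t,x}$ Strichartz estimate needs a quarter derivative. At best you get
\[
N^{1-s}K^{\frac{1}{2}-s}\cdot N^{\frac{1}{4}(1-s)}K^{\frac{1}{4}}\lesssim N^{2-\frac{9}{4}s},
\]
which does not even decay for $s\le\frac{8}{9}$. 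Extracting powers of $\delta$ from the $L^4$ factor still leaves the threshold far above $\frac{11}{13}$.

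The missing idea is to use the bilinear estimate twice, pairing the second low-frequency factor with $G$ as well. This is what the paper does when it splits the duality pairing as $\|\La^{\rho/2}(u_1G)\|_{L^2_{t,x}}\|\La^{(1-\rho)/2}(u_1u_2)\|_{L^2_{t,x}}$ and invokes the bilinear estimates II. You then get $M_1M_2K^{-s}\|v_{M_1}\|_{L^2}\|v_{M_2}\|_{L^2}$, which sums to $N^{2(1-s)+}N^{-s}$. Hence $\|w\|_{L^\infty H^1_a}\lesssim N^{2-3s+}$, i.e. $\beta=3s-2$, and the energy increment is $N^{3-4s+}$ per step.

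Note also that the forcing for $w$ contains terms involving $w$ itself, not only terms with a factor $e^{it\La}\psi_0$. The paper handles this by bounding $u_2=e^{it\La}\psi+w$ in $X^{0,\frac{1}{2}+}_a$ and $X^{s,\frac{1}{2}+}_a$.

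\textbf{The threshold is not derived.} Your claim that the optimization yields exactly $s>\frac{11}{13}$ does not follow from your own time step. With $\beta=3s-2$, your condition $\beta>3(1-s)$, which comes from $\delta\sim N^{-4(1-s)}$, gives $s>\frac{5}{6}$.

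The paper's threshold comes from its cruder step $|I|\sim N^{-\frac{9}{2}(1-s)}$. This is obtained from $|I|^{1/5}\|\phi\|_{\dot H^{9/10}_a}\lesssim|I|^{1/5}N^{\frac{9}{10}(1-s)}$. The iteration then requires $TN^{\frac{9}{2}(1-s)}N^{3-4s}<N^{2(1-s)}$, that is $TN^{\frac{11-13s}{2}+}<1$. This gives $N\sim T^{\frac{2}{13s-11}+}$ and the stated growth $N^{1-s}$.

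Your longer step would give a stronger statement that still implies the theorem. But you would then have to prove the $X^{1,\frac{1}{2}+}_a$ and $X^{0,\frac{1}{2}+}_a$ bounds for the low-frequency evolution on intervals of length $N^{-4(1-s)}$, since the trilinear estimate uses them.

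Finally, radiality is not what makes $X^{s,b}_a$ meaningful. The paper defines it for general data through the distorted Fourier transform, equivalently as $\|e^{-it\La}F\|_{H^b_tH^s_a}$.
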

	\begin{remark}
		For radial initial data, our result holds for $a>-\frac{1}{4}$. Moreover, using the radial Sobolev embedding and Morawetz estimates one can show the scattering for $s>\frac{5}{7}$. We refer to \cite{Bourgain} for the details.
	\end{remark}
	
	\subsection{Background and motivations}
	Let us briefly review the progress on the bilinear Strichartz estimates. For the free NLS, the following bilinear Strichartz estimate holds
	\begin{align}\label{bilinear-intro}
		\big\|e^{it\Delta}P_Nfe^{it\Delta}P_Mg\big\|_{L_{t,x}^2(\R\times\R^d)}\lesssim \frac{M^\frac{d-1}{2}}{N^\frac12}\|f\|_{L^2(\R^d)}\|g\|_{L^2(\R^d)}.
	\end{align}
	This was proved for $d=2$ in \cite{Bourgain-IMRN} and later was generalized to higher dimensions in $d\geq3$ in \cite{Bourgain} and \cite{Visan-Duke} by   using the Fourier transform and Plancherel theorem. Inspired by the interaction Morawetz estimate in \cite{CKSTT-CPAM}, Planchon-Vega \cite{PV} developed the bilinear virial argument to reprove the bilinear Strichartz estimate on $\R^d$ and prove the analogue of \eqref{bilinear-intro} on exterior domain. Their argument is different from \cite{Bourgain} since they do not rely on the convolution structure and Fourier transform. Later, Planchon \cite{P14} applied this argument to further  show the local-in-time bilinear Strichartz estimates on a compact domain $\Omega\subset \R^3$ with additional loss of derivatives
	\begin{align*}
		\big\|e^{it\Delta_\Omega}P_Nfe^{it\Delta_{\Omega}}P_Mg\big\|_{L_{t,x}^2(I\times\Omega)}\lesssim \frac{M}{N^\frac{1}{2}}\|f\|_{L^2(\Omega)}\|g\|_{L^2(\Omega)}
	\end{align*}  
	for $1\leq M\leq N$ and $|I|\leq c N^{-1}$ with $c>0$. 
	Meanwhile, using the bilinear oscillatory integral estimate and the WKB parametrix for $e^{it\Delta_g}$, Hani \cite{Hani} showed the similar estimate holds  for two-dimensional compact manifold without boundary on time interval $I$ with length smaller than $N^{-1}$,
	\begin{align*}
		\big\|e^{it\Delta_g}P_Nfe^{it\Delta_{g}}P_Mg\big\|_{L_{t,x}^2(I\times M)}\lesssim \frac{M}{N^\frac{1}{2}}\|f\|_{L^2(M)}\|g\|_{L^2(M)}.
	\end{align*}
	For the Schr\"odinger equation with trapping potential $H=-\Delta+V$. A typical example is that $V(x)=|x|^2$, Planchon-Tzvetkov-Visciglia \cite{PTV21} showed the sharp bilinear Strichartz estimates in $d=2$. Later, Burq-Poiret-Thomann \cite{BPT} extended it to higher-dimensional case with $\varepsilon$-loss of derivatives.

	For the Schr\"odinger operator with decaying potential, the first result was proved in Camps \cite{Camps}. They showed the bilinear Strichartz estimates for the short range class. The assumption on short range class ensures the existence of distorted Fourier transform and the $L^p$-boundedness of the associated wave operators, see \cite{Yajima}. To prove the bilinear estimate, they viewed the potential term $Vu$ as the inhomogeneous term. Using Bourgain's bilinear estimate as a black-box and the global-in-time local smoothing estimate and further assume that $V$ is smooth, they obtain the desired estimate.  The extra regularity of $V$ can ensure the pseudo-differential calculus.
	Later, Pusateri-Soffer \cite{Soffer-Pusateri} showed the bilinear Strichartz estimates under the assumption of $V$:
	\begin{align*}
		V(x)\in H^{s_1},\,\,\int_{\R^3}(1+|x|)^{s_2+10}|\nabla_x^\alpha V(x)|\,\dd x<\infty,\,0\leq|\alpha|\leq s_2+10
	\end{align*}
	where $s_1=2000$ and $s_2=200$. However, both \cite{Camps} and \cite{Soffer-Pusateri} fail to cover the inverse-square potential, which has the same scaling as the Laplacian and can not be viewed as a perturbation.
	\begin{remark}
		Compared to our work, the assumption of $V$ in \cite{Camps} makes it possible to show the equivalence of Sobolev norms in a wide range of $p$ and $s$. For $V(x)=a\langle x\rangle^{-2-\varepsilon}$, there is no  restriction on the parameter $a$. In our setting, the range of 
		$p$ for which the equivalence of Sobolev norms holds is too narrow to allow the flexible use of Strichartz estimates. 
	\end{remark}
	\subsection{Nonlinear Schr\"odinger equation with inverse-square potentials}
	In this part, we review the progresses on the NLS with inverse-square potentials $V(x)=a|x|^{-2}$ with $a>-\frac14$. 
	\begin{equation}\label{GNLS}
		\left\{
		\begin{aligned}
			& i u_{t} + \mathcal{L}_{a} u =  \mu|u|^{2} u, \\
			& u(0,x) = u_{0} (x),
		\end{aligned}
		\right. \qquad (t, x) \in \R \times \R^3,
	\end{equation}
	where $\mu=\pm1$.

	Formally, the solutions of \eqref{GNLS} conserve the mass and energy by
	\begin{align*}
		& M(u(t)) := \int_{\R^d} |u(t,x)|^2 \,{\rm d}x, \\
		& E_a(u(t)) := \int_{\R^d} \tfrac12|\nabla u(t,x)|^2 + \tfrac{a}{2|x|^2} |u(t,x)|^2 - \tfrac{\mu}{4} |u(t,x)|^{4} \,{\rm d}x.
	\end{align*}
	
	Note that, if $a=0$, then \eqref{GNLS} reduces to the free nonlinear Schr\"odinger equation. For convenience, we start by recalling
	the existing results in $\R^d$, which has been extensively studied in recent years.
	\begin{equation}\label{nls0}\tag{$\text{NLS}_0$}
		iu_{t} + \Delta u = \mu|u|^{2}u.
	\end{equation}
	The solution of the equation \eqref{nls0} is invariant under the rescaling
	\begin{equation}\label{scaling}
		u(t,x)\to u^{\lambda}(t,x) := \lambda u(\lambda^{2}t,\lambda x),
	\end{equation}
	which identifies $\dot H_x^{\frac12}(\R^3)$ as the scaling-critical space of initial data. However, compared to \eqref{nls0}, the equation \eqref{GNLS} with $a\neq 0$ is not space-translation invariant.

	Let us briefly recall the results on cubic NLS with or without potential \eqref{NLS} and \eqref{nls0}. For the free NLS \eqref{nls0} with $\mu=-1$, the global well-posedness and scattering in $\dot H^1$ was proved by  Ginibre-Velo \cite{GV} using the Lin-Strauss type Morawetz estimate. For the focusing case $\mu=1$, using the concentration-compactness argument developed in Kenig-Merle \cite{KM}, Duyckaerts-Holmer-Roudenko \cite{DHR} proved the scattering  in energy space where the mass-energy is below that of the ground state.

	Since there is no conservation law in low regularity space $0<s<1$, the lack of conservation laws at the inter-critical regularity makes it difficult to establish the global well-posedness in $H^s$ with $0<s<1$.  
	Bourgain \cite{Bourgain} first exploited the high-low frequency decomposition argument to show the global well-posedness for this model in $d=3$ for $s>\frac{11}{13} $. Later, Colliander-Keel-Staffilani-Takaoka-Tao \cite{CKSTT-CPAM} introduced the so-called $I$-method and the interaction Morawetz estimate to show the global well-posedness and scattering for $s>\frac45$. Later, Dodson \cite{Dodson} utilized a linear-nonlinear decomposition to improve the result to $s>\frac57$.  Inspired by the work of mass-critical problem in \cite{Dodson-JAMS}, Dodson \cite{Dodson-Camb} utilized the $I$-method as well as the long-time Strichartz estimate to show the global well-posedness and scattering in $H^s$ with  $s>\frac12:=s_c$ for radial initial data which was almost sharp up to the endpoint. Since the $I$-method is actually a subcritical method, we can only prove the result in a subcritical space. For the critical space, it is not valid anymore.    Very recently, Dodson \cite{Dodson-RMI} showed the scattering for $3D$ cubic NLS for $u_0\in\dot{H}^\frac12\cap \dot W^{\frac{7}{6},\frac{11}{7}}(\R^3)$ and  Shen-Wu \cite{SW} further improved the result to $u_0\in \dot{H}^\frac12\cap\dot{W}^{s,1}$ for $s>\frac{12}{13}$. 
	
	Now, we focus on the case that $a\neq0$. For the Schr\"odinger equation with inverse-square potential, the Strichartz estimate  was established in \cite{BPS}. For the defocusing case, Zhang-Zheng \cite{ZZ} showed the scattering in energy space via the new interaction Morawetz estimate. Later, Killip-Miao-Visan-Zhang-Zheng \cite{KMVZZ1} established several harmonic analysis tools associated with $\la$, including the Littlewood-Paley theory and the Mikhlin multiplier theorem. Using these tools, they \cite{KMVZZ} further showed the scattering for the  quintic case in $d=3$. In the same paper, they also discussed the extension to focusing case for radial data. For higher dimensions, we refer to \cite{YZ}. For \eqref{NLS} with energy sub-critical nonlinearities and $\mu=-1$, Killip-Visan-Murphy-Zheng \cite{KMVZ} and Lu-Miao-Murphy \cite{LMM} showed the scattering in energy space where the mass and energy of initial data are below that of ground state based on the concentration-compactness argument. Later, Zheng \cite{Zheng} gave a new proof of scattering using the Dodson-Murphy's argument \cite{DM-PAMS}.   For the mass-energy threshold case, we refer to \cite{MMZ-IUMJ,YZZ} for more details.

	\subsection{Outline of the proof and organization}For the bilinear Strichartz estimate, we adapt the approach in \cite{PTV21}. Our novel contribution is extending such local bilinear estimates in \cite{PTV21} to global one,
	which heavily relies on the Fourier localization, the equivalence of Sobolev norms Theorem \eqref{equivalence} and the interaction Morawetz estimate. Indeed, we first show the following two bilinear estimate holds:
	\begin{equation}\label{eq-H^1 estimate2-in}
		\int_0^T \int_{\R^3}\big| u_N(x)\nabla_x\bar{v}_M(x) \big|^2{\rm d}x{\rm d}t \leq CM^2N\|\Delta_N\varphi_1\|_{L^2(\R^3)}^2\|\Delta_M\varphi_2\|_{L^2(\R^3)}^2,
	\end{equation}
	\begin{equation}\label{eq-H^1 estimate1-in}
		\int_0^T \int_{\R^3}\big| \bar{v}_M(x)\nabla_xu_N(x) \big|^2{\rm d}x{\rm d}t \leq CM^2N\|\Delta_N\varphi_1\|_{L^2(\R^3)}^2\|\Delta_M\varphi_2\|_{L^2(\R^3)}^2.
	\end{equation}
	By the equivalence of Sobolev norms, it implies that 
	\begin{equation}\label{eq-A^1 estimate-2}
		\int_\R\|e^{it\mathcal{L}_a}\Delta_N\varphi_1 e^{it\mathcal{L}_a}\Delta_M\varphi_2\|_{\dot H_a^1(\mathbb{R}^3)}^2{\rm d}t \leq CM^2N\|\Delta_N\varphi_1\|_{L^2(\R^3)}^2\|\Delta_M\varphi_2\|_{L^2(\R^3)}^2.
	\end{equation}
	Then performing the frequency decomposition to $uv$ and the above estimate in $\dot H_a^1$ space, we can complete the proof. The last step is to derive the $L^2$ bilinear Strichartz estimate using the above bilinear estimate in $\dot H_a^1$ space. By the Littlewood-Paley decomposition, we can divide the bilinear term into low frequency and high frequency parts. The proof of high frequency part will follow the strategy in \cite{PTV21}, where the interaction Morawetz estimate and the localization lemma are used. {For the standard Laplacian, by the classical elliptic regularity theory, the localization lemma reads as}
	\begin{equation}\label{fml-point-bound}
		|\phi(x)|^2 \leq C\int_{|x-y|<\lambda^{-1}}(\lambda^{-1}\big| \Delta\phi \big|^2 + C\lambda^3 \big| \phi \big|^2){\rm d}y, ~\forall x\in\R^3.
	\end{equation}
	Thanks to Miao-Su-Zheng \cite{MSZ}, for functions $\phi\in C_0^\infty(\R^3\setminus\{0\})$ and $a>\frac{15}{4}$, one has 
	\begin{equation}\label{fml-L2-control}
		\int_{\R^3} |\Delta \phi|^2\mathrm{d}x \lesssim \int_{\R^3} |\L_a \phi|^2\mathrm{d}x.
	\end{equation}
	The comparison \eqref{fml-L2-control} is not needed in the proof below. Instead, Kato's inequality and the mean-value inequality give the analogue of \eqref{fml-point-bound} with $-\Delta$ replaced by $\La$ for every $a\geq0$. The localization and interaction Morawetz steps that follow this pointwise estimate are retained in full. 
	
	Next, we turn to the low frequency part. As before, spectral localization allows us to write $u_N=N^{-2}\La\widetilde u_N$. A direct product expansion would introduce $|x|^{-2}\widetilde u_Nv_M$ and $\nabla\widetilde u_N\cdot\nabla v_M$. We instead use
	\[
	 v_M\La\widetilde u_N-\widetilde u_N\La v_M
	 =\operatorname{div}(\widetilde u_N\nabla v_M-v_M\nabla\widetilde u_N),
	\]
	in which the potential cancels. The already established bilinear gradient estimates and an $L^2$ bound for the spectrally localized divergence then control the low frequency term for every $a\geq0$. The original high frequency argument is unchanged in substance. The revised proof treats arbitrary initial data directly and does not require a separate angular case.
	
	The paper is organized as follows. In Section 2, we present some notations, and recall some facts from harmonic analysis, such as heat kernel bounds, the equivalence of Sobolev spaces and some related estimates for Schr\"odinger group $e^{it\La}$.
	In Section 3, we introduce the interaction Morawetz quantity and compute the  virial identities.
	In Section 4, we prove Theorem \ref{thm1} by extending the elliptic estimate to our setting involving the decaying potential. In Section 5, we show the global well-posedness for \eqref{NLS} in low regularity space.
	
	\section{Preliminaries}\label{section preliminaries}
	In this section, we will introduce some notations and several fundamental lemmas needed in this paper.
	First, the notation $A\lesssim B$ means that ${A}\leqslant{CB}$ for some constants $ C > 0 $.
	Likewise, if ${A}\lesssim{B}\lesssim{A}$, we say that ${A}\thicksim{B}$. We will also use the notations $A \vee B :=\max\{A,B\}$.
	We use $L^{r}_{x}(\mathbb{R}^d)$ to denote the Lebesgue space of functions $f:\mathbb{R}^d\rightarrow{\mathbb{C}}$ with norm
	$$
	\|f\|_{L^{r}_x(\R^d)}:= \Big( \int_{\mathbb{R}^d}|f(x)|^{r}{\rm d}x \Big)^{\frac{1}{r}}
	$$
	finite, with the usual modifications when $r=\infty$. We also use the space-time Lebesgue spaces $L^{q}_{t}L^{r}_{x}$ which are equipped with the norm
	$$
	\|f\|_{L^{q}_{t}L^{r}_{x}(I\times\R^d)}:=\Big(\int_{I}\|f\|^{q}_{L^r_{x}(\R^d)}{\rm d}t\Big)^{\frac{1}{q}}
	$$
	for any space-time slab $I\times{\mathbb{R}^d}$. When $q=r$, we abbreviate $L^{q}_{t}L^{r}_{x}$ by $L^{q}_{t,x}$.
	
	The Fourier transform on $\R^d$ is defined by
	\begin{align*}
		\hat{f}(\xi):=(2\pi)^{-\frac{d}{2}}\int_{\R^d}e^{-ix\cdot\xi}f(x)\,{\rm d}x,
	\end{align*}
	giving rise to the fractional differentiation operator $|\nabla|^s$, defined by
	\begin{align*}
		|\nabla|^sf(x) := \mathcal{F}^{-1}_{\xi}\big( |\xi|^s\hat{f}(\xi) \big)(x).
	\end{align*}
	In this way, one can define the homogeneous Sobolev space by
	\begin{align*}
		\|f\|_{\dot{W}_x^{s,p}(\R^d)} := \big\| |\nabla|^sf \big\|_{L_x^p(\R^d)}.
	\end{align*}
	
	
	
	\subsection{Harmonic analysis adapted to $\mathcal{L}_a$}
	In this subsection, we introduce some harmonic analysis tools adapted to the operator $\mathcal{L}_a$.
	
	Firstly, for $1<r<\infty$, we write $\dot{H}_{a}^{s, r}(\mathbb{R}^{d})$ and $H_{a}^{s, r}(\mathbb{R}^{d})$ to denote the homogeneous and inhomogeneous Sobolev spaces associated with the operator $\mathcal{L}_{a}$ respectively,
	\[
	\begin{aligned}
		\|f\|_{\dot{H}_{a}^{s, r}(\mathbb{R}^{d})} = \big\| (\mathcal{L}_{a})^{\frac{s}{2}}f \big\|_{L^{r}(\mathbb{R}^{d})},
		\|f\|_{H_{a}^{s, r}(\mathbb{R}^{d})} = \big\| (1+\mathcal{L}_{a})^{\frac{s}{2}}f \big\|_{L^{r}(\mathbb{R}^{d})}.
	\end{aligned}
	\]
	When $r=2$, we simply write $\dot{H}_{a}^{s}(\mathbb{R}^{d})=\dot{H}_{a}^{s, r}(\mathbb{R}^{d})$ and $H_{a}^{s}(\mathbb{R}^{d})=H_{a}^{s, r}(\mathbb{R}^{d})$.
	
	Then by the sharp Hardy inequality, one has
	\begin{equation}\label{iso}
		\big\| \sqrt{\mathcal{L}_a}f \big\|_{L_x^2(\R^d)}^2 \sim \big\| \nabla f \big\|_{L_x^2(\R^d)}^2 \quad
		\textrm{for} ~ a > -\frac{(d-2)^2}4,
	\end{equation}
	thus, the operator $\mathcal{L}_a$ is positive precisely for $ a \ge -\frac{(d-2)^2}4 $. Hereafter, we denote
	\begin{equation}
		\sigma := \frac{d-2}2 - \sqrt{\frac{(d-2)^2}4 + a}.
		\label{sigma}
	\end{equation}
	
	Estimates on the heat kernel associated to the operator $\mathcal{L}_{a}$ were found in \cite{MS}.
	
	\begin{theorem}[Heat kernel bounds, \cite{MS}]\label{heat}
		Fix $ d \ge 3 $ and $ a \ge -\frac{(d-2)^2}4 $. For all $t>0$ and all $x, y\in\mathbb{R}^{d}\backslash\{0\}$, there exist positive constants $C_{1}, C_{2}$ and $c_{1}, c_{2}$ such that
		\[
		C_{1}\big( 1\vee\frac{\sqrt{t}}{|x|} \big)^{\sigma}\big( 1\vee\frac{\sqrt{t}}{|y|} \big)^{\sigma}t^{-\frac{d}{2}}e^{-\frac{|x-y|^{2}}{c_{1}t}} \le e^{-t\mathcal{L}_{a}}(x, y) \le  C_{2} \big( 1\vee\frac{\sqrt{t}}{|x|} \big)^{\sigma}\big( 1\vee\frac{\sqrt{t}}{|y|} \big)^{\sigma}t^{-\frac{d}{2}}e^{-\frac{|x-y|^{2}}{c_{2}t}}.
		\]
	\end{theorem}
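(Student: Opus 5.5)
The plan is to remove the singular potential by a ground-state (Doob) transform and then quote the Grigor'yan--Saloff-Coste characterization of two-sided Gaussian bounds for weighted Laplacians. For $\varphi(x)=|x|^{-\sigma}$ we have $\Delta r^{\gamma}=\gamma(\gamma+d-2)r^{\gamma-2}$. With $\gamma=-\sigma$ this gives $\gamma(\gamma+d-2)=a$ by the definition \eqref{sigma}, so $\mathcal{L}_a\varphi=0$ on $\R^d\setminus\{0\}$ and $\varphi>0$. Introduce the measure $d\mu=\varphi^2\,dx=|x|^{-2\sigma}dx$ and the unitary map $U:L^2(\mu)\to L^2(dx)$, $Uf=\varphi f$. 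A direct computation on $C_0^\infty(\R^d\setminus\{0\})$ gives
\begin{align*}
\int_{\R^d}|\nabla(\varphi f)|^2+\frac{a}{|x|^2}|\varphi f|^2\,dx=\int_{\R^d}|\nabla f|^2\,d\mu .
\end{align*}
This is the usual integration by parts using $\mathcal{L}_a\varphi=0$. It remains valid in the limit because $d-2\sigma=2+2\sqrt{(d-2)^2/4+a}>0$, so the boundary terms at the origin vanish.

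The first step is to check that the Friedrichs extension of $\mathcal{L}_a$ corresponds under $U$ to the Friedrichs (Neumann-type) extension of $L_\mu=-\varphi^{-2}\operatorname{div}(\varphi^2\nabla\cdot)$ on $L^2(\mu)$. This amounts to showing that $C_0^\infty(\R^d\setminus\{0\})$ is a form core for the weighted Dirichlet form $\int|\nabla f|^2d\mu$. That in turn is a capacity statement: the origin has zero $\mu$-capacity. I expect it to follow from a cutoff argument because $d-2\sigma\ge2$. Granting it, we get the identity
\begin{align*}
e^{-t\mathcal{L}_a}(x,y)=\varphi(x)\varphi(y)\,p_t^{\mu}(x,y),
\end{align*}
where $p_t^\mu$ is the heat kernel of $L_\mu$ with respect to $\mu$.

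The second step is to verify the hypotheses of the Grigor'yan--Saloff-Coste theorem for $(\R^d,|\cdot|,\mu)$: volume doubling and the scale-invariant $L^2$ Poincaré inequality on balls. The theorem then yields
\begin{align*}
p_t^\mu(x,y)\asymp \mu(B(x,\sqrt t))^{-1}e^{-|x-y|^2/(ct)},
\end{align*}
with different constants $c$ in the upper and lower bounds. Doubling follows from the elementary computation $\mu(B(x,r))\sim r^d\,(r\vee|x|)^{-2\sigma}$, valid since $-2\sigma>-d$.

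Poincaré is the step I expect to be the main obstacle. When $|{-2\sigma}|<d$ the weight is Muckenhoupt $A_2$, and the Fabes--Kenig--Serapioni theory applies. For large $a$, however, $-2\sigma\ge d$ and the weight leaves $A_2$. There I would argue directly. A ball far from the origin, with $|x|\ge 2r$, has a comparable weight and reduces to the Euclidean case. A ball containing or near the origin I would handle in polar coordinates: the radial part is a one-dimensional power-weight Poincaré inequality (Hardy type), and the angular part is Poincaré on the sphere. This is a chaining argument in the style of the known results that $|x|^\alpha dx$, $\alpha>-d$, satisfies the Poincaré inequality.

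Finally, substitute into the kernel identity. Using $\mu(B(x,\sqrt t))\sim t^{d/2}(\sqrt t\vee|x|)^{-2\sigma}$ gives
\begin{align*}
\frac{\varphi(x)}{\mu(B(x,\sqrt t))^{1/2}}\sim t^{-\frac d4}\Big(\frac{\sqrt t\vee|x|}{|x|}\Big)^{\sigma}=t^{-\frac d4}\Big(1\vee\frac{\sqrt t}{|x|}\Big)^{\sigma}.
\end{align*}
To symmetrize, replace $\mu(B(x,\sqrt t))$ by $\mu(B(x,\sqrt t))^{1/2}\mu(B(y,\sqrt t))^{1/2}$. The ratio of the two volumes is bounded by $(1+|x-y|/\sqrt t)^{C}$, and this factor is absorbed into the Gaussian by adjusting $c_1,c_2$. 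This gives both inequalities of Theorem \ref{heat}. By parabolic scaling, all constants are independent of $t$.
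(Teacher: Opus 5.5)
Your argument is correct, but it is not the route behind the statement. The paper gives no proof; it imports the bounds from Milman--Semenov \cite{MS}, whose method of desingularizing weights works differently. They conjugate the semigroup by time-dependent weights comparable to $(1\vee\sqrt t/|x|)^{\sigma}$ and prove weighted Nash-type inequalities for the conjugated operator. That approach is hands-on and needs neither the parabolic Harnack machinery nor any Poincar\'e inequality for power weights.

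You instead conjugate once by the exact zero-energy solution $\varphi=|x|^{-\sigma}$, obtaining the weighted Laplacian on $(\R^d,|x|^{-2\sigma}dx)$, and let the Grigor'yan--Saloff-Coste/Sturm equivalence do the work. This gives both signs of $\sigma$, the critical value $a=-(d-2)^2/4$, and upper and lower bounds at once. It also gives continuity of the kernel, so the bounds hold for every $x,y\neq0$ and not just almost everywhere. The price is that black box plus the two inputs you isolated, and both go through:

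\begin{enumerate}
\item \textbf{Core statement.} Since $d-2\sigma=2+2\sqrt{(d-2)^2/4+a}\geq2$, cutoffs $\eta(x/\epsilon)$ cost $O(\epsilon^{d-2\sigma-2})$ in energy. At the endpoint $d-2\sigma=2$, logarithmic cutoffs do the job.
\item \textbf{Poincar\'e inequality.} No chaining is needed, because the weak form (enlarged ball on the right) suffices for these theorems. If $|x|<2r$, then $B(x,r)\subset B(0,3r)$ with comparable $\mu$-measure, so only balls centred at $0$ matter. On those, your polar splitting works, including in the non-$A_2$ range $-2\sigma\geq d$. It combines Poincar\'e on $\mathbb S^{d-1}$ at each radius with the one-dimensional inequality for the weight $r^{d-2\sigma-1}$ on $(0,R)$. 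That inequality holds for every $d-2\sigma>0$ by Muckenhoupt's Hardy criterion. Alternatively, you can quote that $|x|^{\delta}dx$ with $\delta>-d$ is a $2$-admissible weight (Heinonen--Kilpel\"ainen--Martio).
\end{enumerate}

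Your final bookkeeping is also right: $\varphi(x)\mu(B(x,\sqrt t))^{-1/2}\sim t^{-d/4}(1\vee\sqrt t/|x|)^{\sigma}$, with the volume ratio absorbed into the Gaussian.
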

	
	As a consequence,  in \cite{KMVZZ1} obtained the  equivalence of Sobolev spaces. Later, Miao-Su-Zheng \cite{MSZ} proved the wider range of $s$ and $p$ which include $s=2$.
	
	\begin{theorem}[Equivalence of Sobolev spaces, \cite{KMVZZ1,MSZ}]\label{equivalence}
		Let $\nu_0 = \sqrt{\frac{1}{4}+a}$. Fix $ d\ge 3 $, $ a > -\frac{(d-2)^2}4 $, $-d<s<2+2\nu_0$ and $s\ne 0$.
		If $1<p<\infty$ satisfies
		\begin{equation*}
			\max\Big\{0,\frac{s+\sigma}{d}\Big\}<\frac{1}{p}<\min\Big\{1, \frac{d-\sigma}{d}\Big\}
		\end{equation*}
		then
		\begin{equation}\label{fml-pre-equiv-1}
			\big\| (-\Delta)^{\frac{s}{2}}f \big\|_{L^{p}_x(\R^d)} \le C(p,s) \big\| \mathcal{L}_{a}^{\frac{s}{2}}f \big\|_{L^{p}_x(\R^d)}\ \ \ \text{for all}\ f\in C_{c}^{\infty}(\mathbb{R}^{d}).
		\end{equation}
		Moreover, assume that
		\begin{equation*}
			\max\Big\{\frac{s}{d}, \frac{\sigma}{d}\Big\}<\frac{1}{p}<\min\Big\{1, \frac{d-\sigma}{d}\Big\},
		\end{equation*}
		which ensures already that $1<p<\infty$, then
		\begin{equation}\label{fml-pre-equiv-2}
			\big\| \mathcal{L}_{a}^{\frac{s}{2}}f \big\|_{L^{p}_x(\R^d)}\le C(p,s)\big\| (-\Delta)^{\frac{s}{2}}f \big\|_{L^{p}_x(\R^d)}\ \ \ \text{for all}\ f\in C_{c}^{\infty}(\mathbb{R}^{d}).
		\end{equation}
	\end{theorem}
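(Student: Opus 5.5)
The plan is to reduce both inequalities to $L^p$-boundedness of the operators $(-\Delta)^{\frac s2}\mathcal{L}_a^{-\frac s2}$ and $\mathcal{L}_a^{\frac s2}(-\Delta)^{-\frac s2}$, and to get this from pointwise kernel bounds built out of Theorem \ref{heat}. First consider $0<s<2$. There we use the subordination formula
\[
\mathcal{L}_a^{-\frac s2}=\frac{1}{\Gamma(\frac s2)}\int_0^\infty t^{\frac s2-1}e^{-t\mathcal{L}_a}\,\dd t .
\]
Inserting the upper bound of Theorem \ref{heat} shows that the kernel of $\mathcal{L}_a^{-\frac s2}$ is dominated by
\[
\big(1\vee\tfrac{|x-y|}{|x|}\big)^{\sigma}\big(1\vee\tfrac{|x-y|}{|y|}\big)^{\sigma}|x-y|^{s-d}.
\]
This is a Riesz potential carrying singular weights at the origin, and Schur-type/weighted Hardy estimates make it bounded exactly in the stated $p$-range. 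The conditions $\frac{s+\sigma}{d}<\frac1p<\frac{d-\sigma}{d}$ are precisely what makes the weights $|x|^{-\sigma}$ and $|y|^{-\sigma}$ locally integrable against $L^p$ and $L^{p'}$.

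To compare with $(-\Delta)^{\frac s2}$, write
\[
\mathcal{L}_a^{\frac s2}f-(-\Delta)^{\frac s2}f=c_s\int_0^\infty t^{-1-\frac s2}\big(e^{t\Delta}-e^{-t\mathcal{L}_a}\big)f\,\dd t .
\]
The heat kernel difference is expanded by Duhamel:
\[
e^{t\Delta}-e^{-t\mathcal{L}_a}=\int_0^t e^{(t-\tau)\Delta}\,\frac{a}{|x|^{2}}\,e^{-\tau\mathcal{L}_a}\,\dd\tau .
\]
Estimating this with the Gaussian bounds should yield a kernel controlled by $|x|^{-s}$ times weights, or by a sum of such terms. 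Combining with the Hardy inequality
\[
\||x|^{-s}g\|_{L^p}\lesssim\|(-\Delta)^{\frac s2}g\|_{L^p},\qquad \tfrac sd<\tfrac1p,
\]
together with its $\mathcal{L}_a$-analogue obtained from the previous step, gives \eqref{fml-pre-equiv-2}. The reverse inequality \eqref{fml-pre-equiv-1} is handled in two ways. One route is a continuity/bootstrap argument: from the difference bound, $\|(-\Delta)^{\frac s2}f\|_p\le\|\mathcal{L}_a^{\frac s2}f\|_p+\|{\rm diff}\|_p$, and the difference is dominated by the weighted Riesz potential applied to $\mathcal{L}_a^{\frac s2}f$. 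The other route is duality, exchanging the roles of $\sigma$ and $p'$. This is why the two $p$-ranges differ: the lower bound $\frac{s+\sigma}{d}$ in \eqref{fml-pre-equiv-1} versus $\max\{\frac sd,\frac\sigma d\}$ in \eqref{fml-pre-equiv-2}.

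For $2\le s<2+2\nu_0$ I would factor $\mathcal{L}_a^{\frac s2}=\mathcal{L}_a\,\mathcal{L}_a^{\frac{s-2}{2}}$, and at $s=2$ use the identity $\mathcal{L}_af=-\Delta f+a|x|^{-2}f$. One direction then follows from Hardy's inequality $\||x|^{-2}f\|_p\lesssim\|\Delta f\|_p$. The other direction needs elliptic regularity for $\mathcal{L}_a$. I would decompose into spherical harmonics, where $\mathcal{L}_a$ acts on the $k$-th harmonic as a Bessel operator of order $\nu_k=\sqrt{(\frac{d-2}2+k)^2+a}$. One checks that the solutions $r^{-\frac{d-2}2\pm\nu_k}$ do not obstruct $\Delta f\in L^p$; the constraint $s<2+2\nu_0$ appears exactly here, as the threshold for the lowest harmonic. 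Negative $s\in(-d,0)$ then follows by duality from the positive case.

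The main obstacle is this last high-regularity direction, $\|\Delta f\|_p\lesssim\|\mathcal{L}_af\|_p$ and its fractional interpolants up to $2+2\nu_0$. The heat kernel bounds alone give only size information and no derivative (Hölder) control of $e^{-t\mathcal{L}_a}(x,y)$ near the origin. My first attempt would be to derive gradient bounds for the heat kernel away from $0$ by a scaling and local parabolic regularity argument, and then to treat the region $|x|\lesssim\sqrt t$ by the explicit Bessel/Hankel representation on each spherical harmonic.
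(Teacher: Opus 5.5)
The paper does not prove this theorem. It quotes \cite{KMVZZ1} for $0<s<2$ and \cite{MSZ} for the enlarged range, so your plan has to stand on its own, and it has genuine gaps.

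\textbf{The case $0<s<2$.} Your architecture is the right heat-kernel route, in the spirit of \cite{KMVZZ1}:
\begin{enumerate}
\item an $\La$-adapted Hardy bound, where the $L^p$-bounded object is $|x|^{-s}\La^{-s/2}$ for $\frac{s+\sigma}{d}<\frac1p<\frac{d-\sigma}{d}$ (not the weighted Riesz potential itself);
\item the comparison $\|(\La^{s/2}-(-\Delta)^{s/2})f\|_{L^p}\lesssim\||x|^{-s}f\|_{L^p}$;
\item the two Hardy inequalities.
\end{enumerate}
Your ``bootstrap'' is just the triangle inequality. However, step 2 is only asserted, and it does not follow from simply inserting Gaussian bounds into your Duhamel formula. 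The $\tau$-integral is then logarithmically large when $|x|,|y|\ll\sqrt t$, and infinite at $x=y=0$. You must take the minimum with the trivial bound $e^{t\Delta}+e^{-t\La}$. For $a\ge0$ this gives a factor $\min\{1,t/r^2\}$ with $r=\min\{|x|,|y|\}$. After integrating against $t^{-1-s/2}$, the near-diagonal kernel is about $r^{-2}|x-y|^{2-s-d}$. Schur's test for this kernel times $|y|^{s}$ closes exactly because $s<2$, which is why this route stops at $s=2$.

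\textbf{The case $s\ge2$.} Here you have no argument, and the difficulty is misplaced. At $s=2$ no elliptic regularity is needed. The identity $-\Delta f=\La f-a|x|^{-2}f$, together with the same Schur/Hardy bound at $s=2$, gives $\|\Delta f\|_{L^p}\lesssim\|\La f\|_{L^p}$ on the stated range. The free Hardy inequality gives the converse.

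The real gap is $2<s<2+2\nu_0$. Factoring $\La^{s/2}=\La\,\La^{(s-2)/2}$ does not combine the cases $2$ and $s-2$, because $\La$ and $\Delta$ do not commute. What is left is a fractional Hardy--Rellich bound such as $\|(-\Delta)^{(s-2)/2}(|x|^{-2}f)\|_{L^p}\lesssim\|\La^{s/2}f\|_{L^p}$. This needs regularity, not just size, of the kernel of $\La^{-s/2}$ near the origin, and Theorem~\ref{heat} gives only size. Your fallback does not help: Bessel estimates proved harmonic by harmonic do not sum in $L^p$ for $p\neq2$ without uniformity in the degree and a resummation device, since orthogonality of that decomposition is an $L^2$ fact.

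The threshold is simply $s+\sigma<d-\sigma$, i.e.\ nonemptiness of the $p$-range, with $\nu_0=\sqrt{(d-2)^2/4+a}$; the paper's $\sqrt{1/4+a}$ is the case $d=3$. The extension in \cite{MSZ} uses a different tool: Sobolev bounds for the stationary wave operators intertwining $\La$ and $-\Delta$, built from Hankel transforms.

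\textbf{The case $s<0$.} ``Negative $s$ follows by duality'' is incorrect. Dualizing \eqref{fml-pre-equiv-2} at $-s>0$ gives \eqref{fml-pre-equiv-1} at $s$ only for $\max\{0,\frac{\sigma}{d}\}<\frac1p<\min\{\frac{d+s}{d},\frac{d-\sigma}{d}\}$. It also applies only for $s>-(2+2\nu_0)$, which exceeds $-d$ when $a<0$.

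The cap $\frac{d+s}{d}$ cannot be removed, so the statement as written is false for $s<0$. Take $a>0$ (so $\sigma<0$) and $0\le f\in C_c^\infty(\R^d\setminus\{0\})$ with $f\not\equiv0$.
\begin{enumerate}
\item For large $|x|$, $(-\Delta)^{s/2}f\gtrsim|x|^{-(d+s)}$.
\item Theorem~\ref{heat} gives $|\La^{s/2}f(x)|\lesssim|x|^{-(d+s)+\sigma}$ there.
\item Since $0\le e^{-t\La}(x,y)\le e^{t\Delta}(x,y)$, we have $|\La^{s/2}f|\le(-\Delta)^{s/2}|f|$, which is bounded.
\end{enumerate}
Hence \eqref{fml-pre-equiv-1} fails for $\frac{d+s}{d}\le\frac1p<\min\{1,\frac{d+s-\sigma}{d}\}$, although the stated range here is all $1<p<\infty$. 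The negative-$s$ ranges must be corrected, e.g.\ to the dualized ones, before any proof can succeed.
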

	As a consequence of equivalence of homogeneous Sobolev norms, we have the following results.

	\begin{lemma}[Fractional product rule]\label{product}
		Fix $ d \ge 3 $ and $ a \ge -\frac{(d-2)^2}4 $. Then for all $f, g\in C^{\infty}_{c}(\mathbb{R}^{d})$,
		\[
		\big\| \sqrt{\mathcal{L}_{a}}(fg) \big\|_{L^{p}_x(\mathbb{R}^{d})}\le C\Big( \big\| \sqrt{\mathcal{L}_{a}}f \big\|_{L^{p_{1}}_x(\mathbb{R}^{d})} \|g\|_{L^{p_{2}}_x(\mathbb{R}^{d})} + \|f\|_{L^{q_{1}}_x(\mathbb{R}^{d})}\big\| \sqrt{\mathcal{L}_{a}}g \big\|_{L^{q_{2}}_x(\mathbb{R}^{d})} \Big)
		\]
		for any exponents satisfying $ \max \Big\{ \frac{1+\sigma}d, \frac{1}d, \frac{\sigma}d \Big\} < \frac1p, \frac1{p_1}, \frac1{p_2}, \frac1{q_1}, \frac1{q_2} < \min \Big\{1, \frac{d-\sigma}{d} \Big\} $ and $\frac{1}{p}=\frac{1}{p_{1}}+\frac{1}{p_{2}}=\frac{1}{q_{1}}+\frac{1}{q_{2}}$.
	\end{lemma}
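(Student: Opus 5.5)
The plan is to derive the fractional product rule from the boundedness of the classical (Euclidean) fractional product rule combined with the equivalence of Sobolev norms in Theorem \ref{equivalence}, taken with $s=1$. Concretely, I would transfer $\sqrt{\La}$ to $|\nabla|$ on the left, apply the classical Kato--Ponce/Leibniz estimate for $|\nabla|(fg)$, and then transfer $|\nabla|$ back to $\sqrt{\La}$ on each factor on the right. Each of the three transfers uses one of the two inequalities \eqref{fml-pre-equiv-1}--\eqref{fml-pre-equiv-2}. The exponent hypotheses in the lemma are designed precisely so that both inequalities are available with $s=1$ at every exponent involved.

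\textbf{Step 1 (left side).} For $f,g\in C_c^\infty(\R^d)$ the product $fg\in C_c^\infty$. Apply \eqref{fml-pre-equiv-2} with $s=1$ and exponent $p$. This requires
\[
\max\{1/d,\sigma/d\}<1/p<\min\{1,(d-\sigma)/d\},
\]
which holds by assumption. It gives
\[
\|\sqrt{\La}(fg)\|_{L^p}\lesssim\||\nabla|(fg)\|_{L^p}.
\]

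\textbf{Step 2 (Euclidean product rule).} Since $1<p,p_i,q_i<\infty$, the classical fractional Leibniz rule gives
\[
\||\nabla|(fg)\|_{L^p}\lesssim\||\nabla|f\|_{L^{p_1}}\|g\|_{L^{p_2}}+\|f\|_{L^{q_1}}\||\nabla|g\|_{L^{q_2}}.
\]

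\textbf{Step 3 (right side).} Apply \eqref{fml-pre-equiv-1} with $s=1$ at the exponents $p_1$ and $q_2$. Here we need $\max\{0,(1+\sigma)/d\}<1/p_1<\min\{1,(d-\sigma)/d\}$, and the same for $q_2$. The lower bound $(1+\sigma)/d$ appears explicitly in the hypothesis, and $\nu_0$-type constraints ($-d<1<2+2\nu_0$) are automatic. This yields
\[
\||\nabla|f\|_{L^{p_1}}\lesssim\|\sqrt{\La}f\|_{L^{p_1}},\qquad \||\nabla|g\|_{L^{q_2}}\lesssim\|\sqrt{\La}g\|_{L^{q_2}}.
\]
Combining Steps 1--3 finishes the proof.

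The main point to check is bookkeeping: the hypothesis must simultaneously cover the range for \eqref{fml-pre-equiv-2} at $p$ and for \eqref{fml-pre-equiv-1} at $p_1,q_2$. Taking the maximum of the lower bounds $(1+\sigma)/d$, $1/d$, $\sigma/d$ does exactly that. The only potentially subtle issue is when $\sigma<0$ (the case $a>0$), where $\sigma/d$ is negative. There the constraint $1/p>0$ is still needed for finiteness, but it is implied by $1/p>1/d$.

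A minor technical point is that Theorem \ref{equivalence} is stated for $C_c^\infty(\R^d)$ rather than $C_c^\infty(\R^d\setminus\{0\})$. Since $f$, $g$ and $fg$ all lie in this class, no density argument is needed.
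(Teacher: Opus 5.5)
Your argument is correct and is exactly what the paper intends when it presents the lemma as a consequence of Theorem~\ref{equivalence}. You pass from $\sqrt{\mathcal{L}_a}$ to $|\nabla|$ at the exponent $p$ via \eqref{fml-pre-equiv-2}, apply the Euclidean Leibniz rule, and return at $p_1,q_2$ via \eqref{fml-pre-equiv-1}, with the exponent bookkeeping as you describe. The endpoint $a=-\frac{(d-2)^2}{4}$, which you did not address and which Theorem~\ref{equivalence} as quoted excludes, is harmless: there $\sigma=\frac{d-2}{2}$, so $\frac{1+\sigma}{d}=\frac12$ and the hypotheses would force $\frac1p=\frac1{p_1}+\frac1{p_2}>1$, i.e.\ the statement is vacuous.
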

	Next, we will introduce the Littlewood-Paley theory associated to the operator $\La$. Let $\varphi:[0, \infty)\to [0, 1]$ be a smooth positive function obeying
	\begin{align*}
		\varphi(\lambda)=
		\begin{cases}
			1,&\ \ \text{for}\ \ 0\le\lambda\le 1,\\
			0,&\ \ \text{for}\ \ \lambda\ge 2.
		\end{cases}
	\end{align*}
	For each dyadic number $N\in 2^{\mathbb{Z}}$, we define
	\[
	\varphi_{N}(\lambda):=\varphi(\lambda/N)\ \ \text{and}\ \ \phi_{N}(\lambda):=\varphi_{N}(\lambda)-\varphi_{N/2}(\lambda).
	\]
	It is clear that $\{\phi_{N}(\lambda)\}_{N\in 2^{\mathbb{Z}}}$ is a partition of unity for $(0, \infty)$. We define the Littlewood-Paley projections via
	\[
	\begin{aligned}
		&f_{\le N}:=P^{a}_{\le N}f:=\varphi_{N}\big( \sqrt{\mathcal{L}_{a}} \big), \\ &f_{N}:=P^{a}_{N}f:=\phi_{N}\big( \sqrt{\mathcal{L}_{a}} \big), \\
		\ \ \ &f_{>N}:=P^{a}_{>N}f:=\big( I-P^{a}_{\le N} \big)f.
	\end{aligned}
	\]
	We will also make use of Littlewood-Paley projections  via  heat kernel:
	\[
	\tilde{P}^{a}_{N}:=e^{-\mathcal{L}_{a}/N^{2}}-e^{-4\mathcal{L}_{a}/N^{2}}.
	\]
	
	%
	Next we recall the following Bernstein and square function estimates from \cite{KMVZZ1}.
	\begin{lemma}[Bernstein estimates, \cite{KMVZZ1}]\label{Bernstein}
		Fix $ d \ge 3 $.
		For $1<p\le q\le\infty$ when $a\ge 0$ or $r_{0}<p\le q<r'_{0}:=\frac{d}{\sigma}$ when $ -\frac{(d-2)^2}4 \le a < 0 $, the following hold:
		\begin{enumerate}
			\item The operators $P^{a}_{\le N}, P^{a}_{N}$ and $\tilde{P}^{a}_{N}$ are bounded on $L^{p}_x(\R^d)$.
			\item The operators $P^{a}_{\le N}, P^{a}_{N}$ and $\tilde{P}^{a}_{N}$ map $L^{p}_x(\R^d)$ to $L^{q}_x(\R^d)$ with norm $O(N^{\frac{d}{p}-\frac{d}{q}})$.
			\item For any $s\in\mathbb{R}$,
			\[
			N^{s}\|P^{a}_{N}f\|_{L^{p}_{x}(\R^d)}\sim \big\|(\mathcal{L}_{a})^{\frac{s}{2}} P^{a}_{N} f \big\|_{L^{p}_{x}(\R^d)}\ \ \text{and}\ \ N^{s}\|\tilde{P}^{a}_{N}f\|_{L^{p}_{x}(\R^d)}\sim \big\|(\mathcal{L}_{a})^{\frac{s}{2}} \tilde{P}^{a}_{N} f\big\|_{L^{p}_{x}(\R^d)}.
			\]
		\end{enumerate}
	\end{lemma}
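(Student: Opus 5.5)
The statement just above is Lemma~\ref{Bernstein} (Bernstein estimates), quoted from \cite{KMVZZ1}. The plan is to derive all three parts from the heat kernel bounds of Theorem~\ref{heat}, via a Gaussian bound for a suitable heat-type operator.

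\textbf{Step 1: the heat operators.} Fix $a\ge0$, so that $\sigma\le 0$. Then the factors $(1\vee \sqrt t/|x|)^{\sigma}$ in Theorem~\ref{heat} are at most $1$. The kernel of $e^{-t\La}$ is therefore dominated by a Gaussian $Ct^{-d/2}e^{-|x-y|^2/(c t)}$. By Young's inequality, $e^{-t\La}$ maps $L^p\to L^q$ with norm $O(t^{-\frac d2(\frac1p-\frac1q)})$ for all $1\le p\le q\le\infty$. Taking $t=N^{-2}$ gives parts (1) and (2) for $\tilde P^a_N$ at once.

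For $a<0$ we have $\sigma>0$, and the weights blow up near the origin. Here I would instead check boundedness of the majorant kernel $(1\vee\sqrt t/|x|)^{\sigma}(1\vee\sqrt t/|y|)^{\sigma}t^{-d/2}e^{-|x-y|^2/ct}$ between $L^p$ and $L^q$ directly, using Schur's test or Hölder near $x=0$. The weight $|x|^{-\sigma}$ lies in $L^q_{\rm loc}$ exactly when $q<d/\sigma$, and its dual condition on $y$ gives $p>r_0=(d/\sigma)'$. This is where the restriction $r_0<p\le q<r_0'$ comes from.

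\textbf{Step 2: spectral multipliers.} Write $\varphi_N(\sqrt{\La})=F(\La/N^2)$. Set $G(\lambda)=F(\lambda)e^{\lambda}$, which is smooth and compactly supported, so $F(\La/N^2)=G(\La/N^2)\,e^{-\La/N^2}$. It is not enough to bound $G(\La/N^2)$ on $L^2$ only, since we need $L^p$ bounds. Instead, Step~1 gives Gaussian (respectively weighted Gaussian) bounds for the heat kernel, and the standard Hebisch/Duong–Ouhabaz–Sikora spectral multiplier theorem then yields kernel bounds for $F(\La/N^2)$ with rapidly decaying off-diagonal tails at scale $N^{-1}$. The kernel is bounded by $C_K N^d(1+N|x-y|)^{-K}$, times the weights when $a<0$. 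Young's inequality then gives (1) and (2) for $P^a_{\le N}$ and $P^a_N$.

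\textbf{Step 3: part (3).} Write $\La^{s/2}P^a_N = N^s\,\psi(\La/N^2)\,\tilde\psi(\La/N^2)$, where $\psi(\lambda)=\lambda^{s/2}\chi(\lambda)$ and $\chi$ is a fattened cutoff equal to $1$ on the support of $\phi_1$. Since $\psi$ is smooth and compactly supported away from $0$, Step~2 shows that $\psi(\La/N^2)$ is bounded on $L^p$, which gives $\|\La^{s/2}P^a_Nf\|_p\lesssim N^s\|P^a_N f\|_p$. The reverse inequality uses $\lambda^{-s/2}\chi$ in the same way.

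For $\tilde P^a_N$ the symbol $e^{-\lambda}-e^{-4\lambda}$ is not compactly supported, and it vanishes only linearly at $0$. I would treat it with a holomorphic-functional-calculus/subordination argument: express $\La^{s/2}e^{-\La/N^2}$ through heat kernels at comparable times, which gives the upper bound. The lower bound for $\tilde P^a_N$ is the delicate point. It should hold only in the sense used in \cite{KMVZZ1}, i.e.\ by splitting $f$ into pieces with $\La\sim N^2$ and their tails.

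I expect the main obstacle to be the $a<0$ weighted kernel estimates, which fix the exact $p$-range. The multiplier theorem itself can be taken as a black box.
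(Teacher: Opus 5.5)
The paper gives no proof of this lemma; it imports it from \cite{KMVZZ1}. Your treatment of parts (1), (2), and the $P^a_N$ half of (3) follows the standard route and is fine at this level of detail. That route is: heat kernel bounds, kernel estimates for smooth compactly supported functions of $\La/N^2$, and the factorization $\La^{s/2}P^a_N=N^s\psi(\La/N^2)P^a_N$ with $\psi(\lambda)=\lambda^{s/2}\chi(\lambda)$.

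One caveat concerns $a<0$. The Hebisch/Duong--Ouhabaz--Sikora machinery needs genuine Gaussian bounds, and these fail on $(\R^d,dx)$ when $\sigma>0$. So it does not directly give your weighted kernel bounds. You can obtain them by conjugating with the ground state $|x|^{-\sigma}$; note that $\La|x|^{-\sigma}=0$ away from the origin. This turns $\La$ into a weighted Laplacian with Gaussian bounds on the doubling space $(\R^d,|x|^{-2\sigma}dx)$. Alternatively, use an $L^p$ multiplier theorem valid for $r_0<p<r_0'$ and factor through heat operators.

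\textbf{The gap.} The genuine gap is the last paragraph of Step 3, and it cannot be closed: the $\tilde P^a_N$ half of (3) is false for every $s\neq0$. The symbol $e^{-\lambda/N^2}-e^{-4\lambda/N^2}$ is strictly positive on $(0,\infty)$, so $\tilde P^a_N$ is not spectrally localized.

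\emph{Counterexample.} Take $f=P^a_Mh$ with $P^a_Mh\neq0$. Then $\tilde P^a_Nf=P^a_M(\tilde P^a_Nh)\neq0$. The $P^a_M$ version of (3), which you did prove, gives $\|\La^{s/2}\tilde P^a_Nf\|_p\sim M^s\|\tilde P^a_Nf\|_p$. This is incompatible with $\|\La^{s/2}\tilde P^a_Nf\|_p\sim N^s\|\tilde P^a_Nf\|_p$ as $M/N\to0$ or $M/N\to\infty$. For $p=2$, which always lies in the admissible range, this is immediate from the spectral theorem. Both one-sided inequalities fail.

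\emph{What your argument actually gives.} Your subordination argument yields the operator bound $\|\La^{s/2}\tilde P^a_Nf\|_p\lesssim N^s\|f\|_p$, valid for $s\ge-2$ where $\mu^{s/2}(e^{-\mu}-e^{-4\mu})$ is bounded. The right-hand side has $\|f\|_p$, not $\|\tilde P^a_Nf\|_p$. Splitting $f$ into a piece with $\La\sim N^2$ plus tails cannot rescue the lower bound, because the tails are precisely the counterexamples.

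You should have flagged this part as false rather than sketching a proof of it. The correct statement keeps (3) for $P^a_N$ only, possibly supplemented by the one-sided operator bound above for $\tilde P^a_N$. Nothing later in the paper uses the $\tilde P^a_N$ half.
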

	
	%
	%
	
	Burq, Planchon, Stalker, and Tahvildar-Zadeh in \cite{BPS} proved the Strichartz estimates for the propagator $e^{-it\mathcal{L}_a}$ for $ d \ge 2 $.
	
	\begin{theorem}[Strichartz estimates, \cite{BPS}]\label{strichartz}
		Fix $ d \ge 2 $ and $ a > -\frac{(d-2)^2}4 $. The solution $ u $ to
		\[
		iu_{t} = \mathcal{L}_{a}u + F
		\]
		on an interval $I\ni t_{0}$ satisfies
		\begin{equation*}
			\| u \|_{L^{q}_{t}L^{r}_{x}(I\times\mathbb{R}^{d})} \le C\big( \| u(t_{0}) \|_{L^{2}(\mathbb{R}^{d})} + \|F\|_{L^{\tilde{q}'}_{t}L^{\tilde{r}'}_{x}(I\times\mathbb{R}^{d})}),
		\end{equation*}
		whenever $\frac{2}{q}+\frac{d}{r} = \frac{2}{\tilde{q}} + \frac{d}{\tilde{r}} = \frac{d}{2}$, $2\le q, \tilde{q}\le\infty$, and $q\neq \tilde{q}$. Moreover, we call such pairs $(q,r)$ and $(\tilde{q}, \tilde{r})$ admissible pairs.
	\end{theorem}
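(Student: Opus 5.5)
The plan is to treat the potential perturbatively at the level of \emph{local smoothing}, not at the level of Strichartz norms. The free Strichartz estimates and the Kato--Yajima smoothing estimate for $e^{it\Delta}$ are taken as black boxes, and only a smoothing estimate for $e^{-it\La}$ itself is proved from scratch. Since $|x|^{-2}$ has the same scaling as $\Delta$, one cannot hope to absorb $a|x|^{-2}u$ with a small constant. Instead, the idea is to split $|x|^{-2}=|x|^{-1}\cdot|x|^{-1}$ and place one factor on each side of a Duhamel formula.

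\textbf{Step 1: smoothing estimate for $\La$.} First I would prove
\begin{equation*}
\big\||x|^{-1}e^{-it\La}f\big\|_{L^2_{t,x}(\R\times\R^d)}\lesssim \|f\|_{L^2(\R^d)} .
\end{equation*}
By Kato's theory this is equivalent to the uniform resolvent bound $\sup_{z\notin[0,\infty)}\||x|^{-1}(\La-z)^{-1}|x|^{-1}\|_{L^2\to L^2}<\infty$. To prove it, decompose $f$ into spherical harmonics. On the $k$-th harmonic, $\La$ becomes a Bessel operator of order
\begin{equation*}
\nu_k=\sqrt{\tfrac{(d-2)^2}{4}+k(k+d-2)+a},
\end{equation*}
which is diagonalized by the Hankel transform $\mathcal H_{\nu_k}$. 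Radially, the estimate reduces to bounding $\int_0^\infty\int_\R |r^{-1}\mathcal H_{\nu}(e^{-it\rho^2}\hat f)(r)|^2 r^{d-1}\,dt\,dr$ by $\|\hat f\|_{L^2(\rho^{d-1}d\rho)}$. After Plancherel in $t$, this becomes the Weber--Schafheitlin type bound $\int_0^\infty J_\nu(r\rho)^2 r^{-1}\,dr=(2\nu)^{-1}$. That bound is finite and uniform in $k$ as long as $\nu_k\ge\nu_0>0$. This holds precisely because $a>-\frac{(d-2)^2}{4}$, and I would use this fact exactly here.

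\textbf{Step 2: homogeneous estimate.} Write $u=e^{-it\La}f$ as a perturbation of the free flow:
\begin{equation*}
u(t)=e^{it\Delta}f-ia\int_0^t e^{i(t-s)\Delta}\big(|x|^{-2}u(s)\big)\,ds .
\end{equation*}
The free term is controlled by the free Strichartz estimate. For the Duhamel term, write $|x|^{-2}u=|x|^{-1}G$ with $G=|x|^{-1}u$; by Step 1, $\|G\|_{L^2_{t,x}}\lesssim\|f\|_2$. It then suffices to show the free ``smoothing-to-Strichartz'' inhomogeneous estimate
\begin{equation*}
\Big\|\int_{s<t}e^{i(t-s)\Delta}|x|^{-1}G(s)\,ds\Big\|_{L^q_tL^r_x}\lesssim\|G\|_{L^2_{t,x}} .
\end{equation*}
Without the constraint $s<t$, this factors as (free Strichartz)$\,\circ\,$(dual of Kato--Yajima), and both factors are known. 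For $q>2$, the Christ--Kiselev lemma then restores the time truncation. The endpoint $q=2$ (available for $d\ge3$) is handled by a direct $TT^*$ argument in the style of Rodnianski--Schlag. In that argument one exploits that the bilinear form is a Hilbert-space pairing in $L^2_{t,x}$ on the input side.

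\textbf{Step 3: inhomogeneous estimate.} Duality applied to Step 2 gives the dual homogeneous bound $\|\int e^{is\La}F(s)\,ds\|_{L^2}\lesssim\|F\|_{L^{\tilde q'}L^{\tilde r'}}$. Composing it with the homogeneous bound controls the untruncated operator $\int_\R e^{-i(t-s)\La}F(s)\,ds$. The Christ--Kiselev lemma then yields the retarded operator $\int_{t_0}^t$, which is valid whenever $q>\tilde q'$. This is where the statement's exclusion of the double-endpoint configuration comes from. The Duhamel formula on $I$ then gives the theorem.

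The main obstacle is Step 1, specifically making the Hankel-transform smoothing bound uniform over all spherical harmonics and all admissible $a$. Near the Hardy threshold $\nu_0\downarrow0$ the constant degenerates, and the large-$\nu_k$ asymptotics of $J_\nu$ must be handled with care. I would first try the exact identity $\int_0^\infty J_\nu^2(r)\,r^{-1}dr=1/(2\nu)$ together with orthogonality of the harmonics. If the identity is not directly usable, the fallback is a multiplier/Morawetz argument with weight $|x|$: its commutator with $\La$ produces $|x|^{-1}|\nabla_\theta u|^2$ and $(a+\tfrac{(d-1)(d-3)}{4})|x|^{-3}|u|^2$ terms, which are nonnegative in the relevant range.
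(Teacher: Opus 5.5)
\textbf{Verdict: there is a genuine gap in $d=2$, and a secondary one at the $d\ge3$ endpoint.}

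The paper does not prove this statement; it quotes it from \cite{BPS}. For $d\ge3$ your outline is the standard perturbative route: Hankel-transform smoothing for $\La$, then a Duhamel expansion off $e^{it\Delta}$, closed with Christ--Kiselev. Step 1 is sound. The Weber--Schafheitlin value $\int_0^\infty J_\nu(s)^2s^{-1}\,ds=(2\nu)^{-1}$ gives the exact identity $\||x|^{-1}e^{-it\La}f\|_{L^2_{t,x}}^2=c\sum_k\nu_k^{-1}\|P_kf\|_2^2$, where $P_k$ projects onto degree-$k$ harmonics, so you need no large-$\nu$ asymptotics.

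\textbf{The gap in $d=2$.} The statement includes $d=2$, and there Step 2 fails. The hypothesis forces $a>0$, so $\nu_0=\sqrt a>0$ and Step 1 holds for $\La$. But Step 2 needs the same smoothing bound for the reference operator $-\Delta$, whose radial order is $\mu_0=(d-2)/2=0$. Your own computation then gives $\||x|^{-1}e^{it\Delta}f\|_{L^2_{t,x}}^2=c\,\|f\|_2^2\int_0^\infty J_0(s)^2s^{-1}\,ds=\infty$ for every nonzero radial $f$. The divergence is at $s\to0$; it reflects $|x|^{-2}\notin L^1_{\mathrm{loc}}(\R^2)$. This has three consequences:
\begin{itemize}
\item the map $G\mapsto\int_\R e^{-is\Delta}|x|^{-1}G(s)\,ds$ is unbounded from $L^2_{t,x}$ to $L^2_x$;
\item the untruncated operator you want to factor is unbounded into every admissible $L^q_tL^r_x$, since its adjoint is infinite on suitable radial inputs;
\item the retarded operator already fails at $(q,r)=(\infty,2)$: take $G$ compactly supported in time and let $t\to\infty$.
\end{itemize}
So in $d=2$ neither your factorization nor Christ--Kiselev has anything to act on.

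\textbf{A repair for $d=2$.} Split $f$ and $F$ into their radial parts and their components in $L^2_{\ge1}$. The operators $\La$, $\Delta$ and multiplication by $|x|^{-2}$ all preserve harmonic degree, and the spherical average is bounded on every $L^p$.
\begin{itemize}
\item On $L^2_{\ge1}$ your argument runs verbatim, because there the free orders are $\mu_k=k\ge1$.
\item On radial functions, Weber's second exponential integral shows the kernel of $e^{-it\La}$ (with respect to $s\,ds$) is a unimodular factor times $(2t)^{-1}J_{\sqrt a}(rs/2t)$. Since $|J_\nu|\le1$ for $\nu\ge0$, the kernel is $O(|t|^{-1})$, and Keel--Tao gives the non-endpoint estimates directly.
\end{itemize}
In $d=2$ the pair $(q,r)=(2,\infty)$ is false already for $a=0$, so the printed range must be read with it excluded. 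Likewise ``$q\neq\tilde q$'' must be read as $(q,\tilde q)\neq(2,2)$, which you already did.

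Alternatively, split the weight asymmetrically as $|x|^{-1+2\epsilon}\cdot|x|^{-1-2\epsilon}$, using the general-weight smoothing estimate quoted in the paper. The cost is a mismatch between $\La^{\epsilon}$ and $(-\Delta)^{\epsilon}$ that you must then undo.

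\textbf{The $d\ge3$ endpoint.} Your endpoint sentence is not yet an argument, since Christ--Kiselev does not apply from $L^2_t$ to $L^2_t$. The usual device is to note $|x|^{-1}\in L^{d,\infty}$. O'Neil's inequality then gives $\||x|^{-1}G\|_{L^2_tL^{2d/(d+2),2}_x}\lesssim\|G\|_{L^2_{t,x}}$. Finally, the Lorentz-space form of the Keel--Tao double-endpoint inhomogeneous estimate maps this into $L^2_tL^{2d/(d-2),2}_x\subset L^2_tL^{2d/(d-2)}_x$.
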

	Next, we need a weighted $L^2$ estimate for operator $e^{it\La}$. Before presenting the result, we first introduce some notations borrowed from \cite{BPS}.
	Let $d\ge3$ and $\lambda(d)=\frac{d-2}{2}$. We define
	\begin{equation*}
		\mu_{b}(d,a) = \sqrt{(\lambda(d)+b)^2+a}.
	\end{equation*}
	Next, we denote $\Omega^s$ by 
	\begin{equation*}
		(\Omega^s f)(x) = |x|^s f(x).
	\end{equation*}
	\begin{theorem}[Local smoothing, \cite{BPS}]\label{thm-locals}
		Let  $b\ge 0$ and $0<\alpha<\frac{1}{4}+\frac{1}{2}\mu_{b}$. Then,  there exists $C>0$ such that
		\begin{equation*}
			\|\Omega^{-\frac{1}{2}-2\alpha} e^{it\La}u_0\|_{L^2_{t,x}(\R\times \R^d)} \le C \|\La^{\alpha-\frac{1}{4}} u_0\|_{L^2_x(\R^d)}
		\end{equation*}
		for initial data $u_0\in L_{>b}^2(\R^d)$, where $L_{>b}^2$ is the subspace of $L^2$ consisting of functions that are orthogonal to all spherical harmonics of degree
		less than $b$.
	\end{theorem}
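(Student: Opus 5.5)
We prove the estimate by reducing it, through the spherical harmonic decomposition and the Hankel transform, to Plancherel in time plus a uniform bound on a Bessel integral. Fix $b\ge0$, $0<\alpha<\frac14+\frac12\mu_b$ and $u_0\in L^2_{>b}(\R^d)$.

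\textbf{Step 1: reduction to one harmonic.} Expand
\[
u_0(r\omega)=\sum_{k\ge b}\sum_{m} c_{k,m}(r)Y_{k,m}(\omega),
\]
with the $Y_{k,m}$ an orthonormal basis of spherical harmonics of degree $k$. On the $(k,m)$ component, $\La$ acts as
\[
-\partial_r^2-\tfrac{d-1}{r}\partial_r+\tfrac{k(k+d-2)+a}{r^2}.
\]
After conjugating by $r^{\frac{d-1}{2}}$, this is a Bessel operator of order $\nu_k=\sqrt{(\lambda(d)+k)^2+a}$, and $\nu_k\ge\mu_b$ for all $k\ge b$. The weight $|x|^{-\frac12-2\alpha}$ is radial, so both sides of the estimate split orthogonally over $(k,m)$. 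It therefore suffices to prove the one-component estimate with a constant uniform in $\nu\ge\mu_b$.

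\textbf{Step 2: Hankel transform and Plancherel in time.} Use the Hankel transform $\mathcal H_\nu$, normalized to be unitary on $L^2(r^{d-1}\,dr)$, with kernel $(r\rho)^{-\frac{d-2}{2}}J_\nu(r\rho)$. It diagonalizes the radial operator as multiplication by $\rho^2$. For a component with transform $F(\rho)$,
\[
e^{it\La}u(r)=\int_0^\infty e^{it\rho^2}F(\rho)(r\rho)^{-\frac{d-2}{2}}J_\nu(r\rho)\,\rho^{d-1}\,d\rho .
\]
Fix $r$ and substitute $\tau=\rho^2$; this is a Fourier integral in $t$. Plancherel in $t$ then gives
\[
\int_\R|e^{it\La}u(r)|^2dt=\pi\int_0^\infty|F(\rho)|^2(r\rho)^{-(d-2)}J_\nu(r\rho)^2\rho^{2d-3}\,d\rho .
\]
Multiply by $r^{-1-4\alpha}r^{d-1}$, integrate in $r$, and use Fubini. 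Rescaling $s=r\rho$ produces the factor $\rho^{4\alpha-1}$, so the left side equals
\[
\pi\int_0^\infty|F(\rho)|^2\rho^{4\alpha-1}\rho^{d-1}\,d\rho\cdot I_\nu,\qquad I_\nu=\int_0^\infty J_\nu(s)^2s^{-4\alpha}\,ds .
\]
The first factor is exactly $\|\La^{\alpha-\frac14}u\|_{L^2}^2$.

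\textbf{Step 3: uniform bound on $I_\nu$.} This is the main obstacle. The integral $I_\nu$ converges under the given hypotheses:
\begin{itemize}
\item at $s=0$, $J_\nu(s)^2\sim s^{2\nu}$, so convergence needs $2\nu-4\alpha>-1$, which follows from $\alpha<\frac14+\frac12\mu_b\le\frac14+\frac{\nu}{2}$;
\item at $s=\infty$, $J_\nu(s)^2=O(s^{-1})$, so convergence needs $4\alpha>0$.
\end{itemize}
For uniformity in $\nu$ we use the Weber--Schafheitlin formula with $\lambda=4\alpha$:
\[
I_\nu=\frac{\Gamma(\lambda)\,\Gamma\!\left(\nu+\frac{1-\lambda}{2}\right)}{2^\lambda\,\Gamma\!\left(\frac{1+\lambda}{2}\right)^2\Gamma\!\left(\nu+\frac{1+\lambda}{2}\right)} .
\]
This is valid for $0<\lambda<2\nu+1$. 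As $\nu\to\infty$ the Gamma ratio behaves like $\nu^{-\lambda}$, so it tends to $0$; it is continuous on $[\mu_b,\infty)$. If $\lambda\ge1$, then $\Gamma(\lambda)<\infty$ and the factor $\Gamma\!\left(\nu+\frac{1-\lambda}{2}\right)$ stays finite, with argument at least $\mu_b+\frac{1-\lambda}{2}>0$. Hence $\sup_{\nu\ge\mu_b}I_\nu<\infty$.

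Summing over $(k,m)$ then yields the estimate. It is first justified for $F$ smooth and compactly supported in $(0,\infty)$, and then extended by density.
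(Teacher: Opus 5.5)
Your proof is correct. For each spherical harmonic, Plancherel in $t$ and the substitution $s=r\rho$ give the exact identity $\|\,|x|^{-\frac12-2\alpha}e^{it\La}u\|_{L^2_{t,x}}^2=\pi I_{\nu}\|\La^{\alpha-\frac14}u\|_{2}^2$. The Weber--Schafheitlin formula then yields $\sup_{\nu\ge\mu_b}I_\nu<\infty$ in the stated range. Your case split ``if $\lambda\ge1$'' is harmless but unnecessary, since for $\lambda<1$ the argument $\nu+\frac{1-\lambda}{2}$ is trivially positive.

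The paper gives no proof of its own and quotes the result from \cite{BPS}. Your argument is the natural Hankel-transform proof, and I believe it is essentially the one used there: a reduction to a uniform bound on $\int_0^\infty J_\nu(s)^2 s^{-4\alpha}\,ds$, whose convergence at $s=0$ is exactly what the hypothesis $\alpha<\frac14+\frac12\mu_b$ guarantees.
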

	\begin{remark}
		In our paper, we focus on $d=3$. Therefore, taking $b=1$ and thus we can take $\alpha=\frac{3}{4}$, i.e.
		\begin{equation}\label{fml-weighedL22}
			\||x|^{-2}e^{it\La}u_0\|_{L^2_{t,x}(\R\times\R^d)} \lesssim \|\La^{\frac{1}{2}}u_0\|_{L^2_x(\R^d)}
		\end{equation}
		for $u_0\in L_{>1}^2(\R^3)$.
	\end{remark}
	
	We also need the dispersive estimate for Schr\"odinger operator with repulsive inverse square potential $a>0$, which was proved in Fanelli-Felli-Fontelos-Primo \cite{FFFP}.
	\begin{lemma}Let $a\geq0$, then the following dispersive estimate holds
		\begin{align*}
			\big\|e^{-it\mathcal{L}_a}f\|_{L_x^\infty(\R^3)}\leqslant C|t|^{-\frac32}\|f\|_{L_x^1(\R^3)}.
		\end{align*}
	\end{lemma}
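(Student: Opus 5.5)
The plan is to prove the estimate directly from an explicit representation of the kernel of $e^{-it\La}$, using the spherical-harmonic decomposition and the Hankel transform. Since $\La$ commutes with rotations, on the span of $Y_{l,m}$ it acts as the Bessel-type operator $-\partial_r^2-\frac{2}{r}\partial_r+\frac{l(l+1)+a}{r^2}$. This has index $\nu_l=\sqrt{(l+\frac12)^2+a}$, and for $a\ge0$ we have $\nu_l\ge\frac12$. The Weber second exponential integral then gives, for $x=r\omega$ and $y=\rho\theta$,
\begin{align*}
e^{-it\La}(x,y)=\frac{e^{i\frac{r^2+\rho^2}{4t}}}{2it\,(r\rho)^{1/2}}\sum_{l\ge0}\frac{2l+1}{4\pi}\,e^{-i\frac{\pi}{2}\nu_l}J_{\nu_l}\Big(\frac{r\rho}{2t}\Big)P_l(\omega\cdot\theta).
\end{align*}
The sign conventions for $t$ only affect a conjugation. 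Writing $s=\frac{r\rho}{2|t|}$, the pointwise bound $|e^{-it\La}(x,y)|\lesssim|t|^{-3/2}$, and hence the lemma, reduces to the uniform bound
\begin{align*}
\sup_{s>0,\ \kappa\in[-1,1]}\ s^{-1/2}\Big|\sum_{l\ge0}(2l+1)\,e^{-i\frac{\pi}{2}\nu_l}J_{\nu_l}(s)P_l(\kappa)\Big|<\infty .
\end{align*}

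For $a=0$ we have $\nu_l=l+\frac12$, and the plane-wave expansion $e^{is\kappa}=\sum_l(2l+1)i^l j_l(s)P_l(\kappa)$, with $j_l(s)=\sqrt{\pi/2s}\,J_{l+1/2}(s)$, shows that the sum equals $\sqrt{2s/\pi}\,e^{-is\kappa}$ up to a unimodular constant. This is exactly the required bound. For $a>0$ I will treat the sum as a perturbation of this free identity, splitting into $s\lesssim1$ and $s\gtrsim1$.

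When $s\le1$, use $|J_\nu(s)|\le \frac{(s/2)^\nu}{\Gamma(\nu+1)}$ together with $|P_l|\le1$. Since $\nu_l\ge\nu_0=\sqrt{\frac14+a}>\frac12$ and $\Gamma(\nu_l+1)$ grows factorially, the series is bounded by $Cs^{\nu_0}\le Cs^{1/2}$.

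When $s\ge1$, I would use Schl\"afli's integral representation
\begin{align*}
J_\nu(s)=\frac1\pi\int_0^\pi\cos(\nu\tau-s\sin\tau)\,d\tau-\frac{\sin\nu\pi}{\pi}\int_0^\infty e^{-s\sinh u-\nu u}\,du .
\end{align*}
This splits the $l$-sum into an oscillatory piece and a tail piece, each of which can then be summed in $l$ against $P_l(\kappa)$. The key input is that $\nu_l-(l+\frac12)=\frac{a}{2l+1}+O(l^{-3})$ is a symbol of order $-1$ in $l$. Consequently the phases $e^{-i\frac\pi2\nu_l}e^{\pm i\nu_l\tau}$ differ from their free counterparts by factors $1+\beta_l$, where $\beta_l$ is a symbol of order $-1$. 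The free part resums, via the generating function of Legendre polynomials (equivalently the $a=0$ identity), into a kernel that the integral in $\tau$ controls by $s^{1/2}$. For the perturbation I plan to sum by parts in $l$ using the Christoffel--Darboux-type partial sums of $(2l+1)P_l(\kappa)$, so that each difference of $\beta_l$ gains a factor $l^{-2}$. The tail integral is harmless for $s\ge1$ because of the factor $e^{-s\sinh u}$.

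I expect this large-$s$ step to be the main obstacle. The resummation must be uniform in $\kappa$, including near $\kappa=\pm1$, where $\sum(2l+1)P_l$ concentrates, and uniform in the transition region $l\sim s$, where $J_{\nu_l}(s)$ is neither oscillatory nor exponentially small. At this point I would follow the strategy of Fanelli--Felli--Fontelos--Primo \cite{FFFP}. They write the kernel as a $\tau$-integral of $e^{is\cos(\cdot)}$-type terms plus a correction, and show that the correction is given by an absolutely integrable kernel. Finally, integrating the pointwise bound $|e^{-it\La}(x,y)|\le C|t|^{-3/2}$ against $f\in L^1$ yields the lemma.
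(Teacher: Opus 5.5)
Your proposal is consistent with the paper. The paper gives no argument of its own and simply attributes the estimate to \cite{FFFP}. Your Weber-integral kernel formula, the reduction to a uniform bound on $s^{-1/2}\sum_l(2l+1)e^{-i\frac{\pi}{2}\nu_l}J_{\nu_l}(s)P_l(\kappa)$, the $a=0$ plane-wave identity and the $s\le 1$ bound are all correct. For the large-$s$ step you rely on \cite{FFFP}, just as the paper does for the whole lemma.

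You can make that step self-contained, though, and it does not need the resummation or Christoffel--Darboux summation by parts you sketch. Since $0\le(2l+1)(\nu_l-l-\frac12)\le a$ and $|P_l(\kappa)|\le 1$, subtract the free series termwise and apply the mean value theorem in the order. This should reduce everything to $\sum_l\sup_{\nu\in[l+\frac12,\nu_l]}(|J_\nu(s)|+|\partial_\nu J_\nu(s)|)\lesssim s^{1/2}$, uniformly in $\kappa$. That bound ought to follow from Schl\"afli's integral and van der Corput, which give terms of order $s^{-1/2}$ for $\nu\le s/2$, $(s^2-\nu^2)^{-1/4}$ up to the turning point, $s^{-1/3}$ in the Airy zone, and a total of $O(\log s)$ past it.
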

	%
	\subsection{Distorted plane waves}
	In this part, we discuss the distorted plane wave for the operator $$\mathcal{L}_a=-\Delta+a|x|^{-2}.$$
	The associated eigenfunction is the distorted plane wave $\phi(x,\xi)$, which may be described as the solution to
	\begin{align*}
		\mathcal{L}_a\phi(x,\xi)=|\xi|^2\phi(x,\xi).
	\end{align*}
	In particular, when $a=0$, then the eigenfunction degenerates to the classical plane wave $\phi=ce^{-ix\cdot\xi}$. For $a\neq0$, the distorted eigenfunctions can be formulated by
	\begin{align*}
		\phi(x,\xi)&=(|x|\cdot |\xi|)^{-\frac{1}{2}}\sum_{k=1}^\infty i^{\beta_k}J_{\beta_k}(|x|\cdot |\xi|)\overline{\psi_k(\frac{x}{|x|})}\psi_k(\frac{\xi}{|\xi|})\\
		&=(|x|\cdot |\xi|)^{-\frac{1}{2}}\sum_{\ell=0}^\infty i^{\alpha_\ell}J_{\alpha_\ell}(|x|\cdot |\xi|)\sum_{m=1}^{m_\ell}\overline{Y_{\ell,m}(\frac{x}{|x|})}Y_{\ell,m}(\frac{\xi}{|\xi|}),
	\end{align*}
	where $\{Y_{\ell,m}\}_{m=1}^{m_\ell}$ is  the orthogonal basis of the space of spherical harmonics of degree $\ell$ on $\Bbb S^{d-1}$, $\alpha_\ell=\sqrt{\frac{1}{4}+\lambda_\ell+a}$ and $\lambda_\ell=(\ell+1)\ell$. Also, we denote by
	\begin{align*}
		\beta_k=\begin{cases}
			(a+\frac{1}{4})^\frac12,&k=1,\\
			(a+\lambda_\ell+\frac14)^\frac12,&\sum_{j=0}^{\ell-1}m_j<k<\sum_{j=0}^{\ell}m_j
		\end{cases}
	\end{align*}
	and
	\begin{align*}
		m_\ell=\frac{\ell!(2\ell+1)!}{\ell!},\quad \{\psi_k\}_{k=1}^\infty=\{Y_{\ell,m}\}_{m=1}^{m_\ell}.
	\end{align*}
	Notice that for free case $a=0$, the Jacobi-Anger expansion for plane waves combining with the property of spherical harmonics imply that
	\begin{align*}
		e^{-ix\cdot\xi}=(2\pi)^\frac32(|x|\cdot|\xi|)^{-\frac12}\sum_{\ell=0}^\infty i^\ell J_{\ell+\frac12}(|x|\cdot|\xi|)\sum_{m=1}^{m_\ell}\overline{Y_{\ell,m}(\frac{x}{|x|})}Y_{\ell,m}(\frac{\xi}{|\xi|})=(2\pi)^{\frac32}i^{-\frac12}\phi(x,\xi).
	\end{align*}
	Now, we can define the distorted Fourier transform of $f$ by
	\begin{align*}
		(\mathcal{F}_af)(\xi):=\int_{\R^3}f(x)\phi(x,\xi)dx.
	\end{align*}
	Similar to the case of $a=0$, we also have the Plancherel theorem
	\begin{lemma}
		There holds
		\begin{align}\label{Inverse}
			f(x)=\int_{\R^d}(\mathcal{F}_af)(\xi)\overline{\phi(x,\xi)}d\xi
		\end{align}
		and
		\begin{align}\label{Parseval}
			\int_{\R^3}f\cdot\overline{g}dx=\int_{\R^3}\mathcal F_af\overline{\mathcal F_ag}d\xi.
		\end{align}
	\end{lemma}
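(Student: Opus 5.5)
The plan is to diagonalize $\La$ with spherical harmonics and reduce both \eqref{Inverse} and \eqref{Parseval} to the classical Hankel inversion and Plancherel theorems, applied one angular sector at a time.

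\textbf{Step 1: reduction to one angular sector.} I first work on the dense class $\mathcal D$ of finite sums $f(x)=\sum_{\ell\le L}\sum_{m}g_{\ell,m}(|x|)Y_{\ell,m}(\frac{x}{|x|})$ with $g_{\ell,m}\in C_c^\infty(0,\infty)$. For such $f$, and for fixed $\xi$, the $x$-integral defining $\mathcal F_af$ is over a compact set away from the origin. The series for $\phi(x,\xi)$ converges there, uniformly enough to integrate term by term (using $|J_\alpha(s)|\le (s/2)^\alpha/\Gamma(\alpha+1)$ for large $\alpha$ on compact sets). Orthonormality of $\{Y_{\ell,m}\}$ on $\mathbb S^2$ then kills all but finitely many terms, and I obtain
\[
(\mathcal F_af)(\xi)=\sum_{\ell\le L}\sum_m i^{\alpha_\ell}\,|\xi|^{-\frac12}\big(\mathcal H_{\alpha_\ell}h_{\ell,m}\big)(|\xi|)\,Y_{\ell,m}\big(\tfrac{\xi}{|\xi|}\big),
\]
where $h_{\ell,m}(r)=r^{\frac12}g_{\ell,m}(r)$ and $\mathcal H_\nu h(\rho)=\int_0^\infty J_\nu(r\rho)h(r)\,r\,\dd r$ is the standard Hankel transform.

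\textbf{Step 2: Parseval.} The classical facts I use are that $\mathcal H_\nu$ is unitary on $L^2(r\,\dd r)$ and satisfies $\mathcal H_\nu^2=I$ for $\nu>-1$. Here $\alpha_\ell=\sqrt{\frac14+\lambda_\ell+a}\ge\frac12$ since $a\ge0$.

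The map $g\mapsto r^{\frac12}g$ is an isometry $L^2(r^2\dd r)\to L^2(r\,\dd r)$. Likewise $k\mapsto\rho^{-\frac12}k$ is an isometry $L^2(\rho\,\dd\rho)\to L^2(\rho^2\dd\rho)$, and $|i^{\alpha_\ell}|=1$. So in polar coordinates on the $\xi$ side, orthogonality of the $Y_{\ell,m}$ gives
\[
\|\mathcal F_af\|_{L^2}^2=\sum_{\ell,m}\|\mathcal H_{\alpha_\ell}h_{\ell,m}\|_{L^2(\rho\dd\rho)}^2=\sum_{\ell,m}\|g_{\ell,m}\|^2_{L^2(r^2\dd r)}=\|f\|_{L^2}^2.
\]
Polarization yields \eqref{Parseval} on $\mathcal D$. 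Since $\mathcal F_a$ is an isometry on a dense set, it extends uniquely to $L^2(\R^3)$, and \eqref{Parseval} holds for all $f,g\in L^2$.

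\textbf{Step 3: inversion.} For $f\in\mathcal D$, I plug the formula of Step 1 into $\int(\mathcal F_af)(\xi)\overline{\phi(x,\xi)}\,\dd\xi$. The angular integration in $\xi$ again selects the matching $(\ell,m)$ term of $\overline\phi$, which carries the factor $i^{-\alpha_\ell}$ and so cancels $i^{\alpha_\ell}$. The radial integral becomes $|x|^{-\frac12}\mathcal H_{\alpha_\ell}\mathcal H_{\alpha_\ell}h_{\ell,m}(|x|)=|x|^{-\frac12}h_{\ell,m}(|x|)=g_{\ell,m}(|x|)$, which is \eqref{Inverse} on $\mathcal D$. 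The identity then extends to $L^2$, with the integral understood as an $L^2$ limit over $|\xi|\le R$: the adjoint $\mathcal F_a^*$ is bounded, and Step 2 together with $\mathcal F_a^*\mathcal F_a=I$ on $\mathcal D$ gives $\mathcal F_a^*\mathcal F_a=I$ everywhere.

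\textbf{Main obstacle.} I expect the main difficulty to be justifying the term-by-term integrations. In Step 3, $\mathcal F_af$ is not compactly supported in $\xi$, and the Hankel integrals converge only conditionally. I would handle this in two ways:
\begin{itemize}
\item Interpret $\mathcal H_\nu^2=I$ in the $L^2$ sense, via a cutoff $|\xi|\le R$ and letting $R\to\infty$.
\item Note that for $f\in\mathcal D$ only finitely many harmonics appear, so no infinite sum over $\ell$ ever needs to be interchanged with an integral.
\end{itemize}
The only genuinely infinite series, the expansion of $\phi$ itself, is paired against a compactly supported $f$ away from $0$ in Step 1, where uniform convergence is available.
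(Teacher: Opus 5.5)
Your proposal is correct and is essentially the argument the paper relies on: the paper gives no proof (only ``similar to the case of $a=0$''), but the Hankel-transform facts it records in its spherical-harmonics subsection are exactly what your sector-by-sector reduction uses, namely $\mathcal H_\nu=\mathcal H_\nu^{-1}$, self-adjointness and $L^2$ isometry for the $L^2(r^2\,dr)$ kernel $(r\rho)^{-1/2}J_\nu(r\rho)$, which coincides with your $\rho^{-1/2}\mathcal H_\nu(r^{1/2}\,\cdot\,)$. Your normalization check is also right, since with the paper's $\phi$ (no extra constant, unimodular $i^{\alpha_\ell}$) $\mathcal F_a$ is an exact isometry; the one point worth writing out in Step 3 is that $\mathcal F_a^*F$ equals the $L^2$ limit of $\int_{|\xi|\le R}F(\xi)\overline{\phi(\cdot,\xi)}\,d\xi$ as $R\to\infty$, which follows by Fubini against $\mathcal D$ because $\phi$ is locally bounded.
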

	Using the distorted Fourier transform $\mathcal{F}_af$, one can define the Bourgain space $X_a^{s,b}$ as follows.
	\begin{definition}
		Let $s\geq0$, the Bourgain space is the completion of $\mathcal{S}(\R\times\R^3)$ equipped with the norm
		\begin{align*} \|F\|_{X_a^{s,b}(\R\times\R^3)}:=\big\|e^{-it\La}F\|_{H_t^b(\R,H_a^s(\R^3))}=\left(\int_{\R}\int_{\R^3}(1+|\tau-|\xi|^2|^2)^b(1+|\xi|^2)^s\big|\big[\mathcal{F}_t(\mathcal{F}_af)\big](\tau,\xi)\big|^2\,d\xi\,d\tau\right)^\frac12,
		\end{align*}
		where $\mathcal{F}_t$ denote the Fourier transform with respect to $t$.
		For $T > 0$, we can define the restricted Bourgain space equipped with the norm
		\begin{align*}
			\|u\|_{X_a^{s,b}([-T,T])}:=\big\{\|F\|_{X_a^{s,b}(\R\times\R^3)}|F|_{[-T,T]\times\R^3}=u\big\}.
		\end{align*}
		Throughout this paper, we abbreviate $X_{a}^{s,b}:=X_a^{s,b}(\R\times\R^3)$ and $X_a^{s,b}(I)=X_a^{s,b}(I\times\R^3)$ for short.
	\end{definition}
	\begin{remark}
		Let $W_{\pm}$ be the time-dependent wave operator defined via
		\begin{gather*}
			W_{\pm} = \slim_{t\to {\pm \infty}}e^{it \La}e^{it\Delta}, \\
			W_{\pm}^* = \slim_{t\to {\pm \infty}}e^{-it\Delta }e^{-it\La},
		\end{gather*}
		where $W_{\pm}^*$ is the dual operator of $W_{\pm}$ and where $\operatorname{s-lim}$ indicates the strong limit in $L^2(\R^d)$. By definition, we have
		\begin{align*}
			\|f\|_{X_a^{s,b}}&=\|e^{-it\La}f\|_{H_t^bH_a^s}=\|\langle\La\rangle^{\frac{s}{2}}e^{-it\La}f\|_{L_x^2}\|_{H_t^b}\\
			&=\| W_{\pm}\langle\nabla\rangle^sW_{\pm}^*f\|_{L_x^2}\|_{H_t^b}=\|W^*_{\pm}f\|_{X^{s,b}}.
		\end{align*}
	\end{remark}
	Next, we state some basic properties of Bourgain space.
	\begin{proposition}
		The following statements hold for $s\geq0$ and $b>\frac{1}{2}$.
		\begin{enumerate}
			\item We have the following Sobolev embedding:
			\begin{align*}
				X_{a}^{s,b}\hookrightarrow C(\R,H_a^s(\R^3)).
			\end{align*}
			\item  For $s^\prime \leq s$ and $b^\prime \leq b$ and compact time interval $I$, it holds
			\begin{align*}
				X_a^{s,b}(I) \hookrightarrow X_a^{s^\prime,b^\prime}(I).
			\end{align*}
		\end{enumerate}
	\end{proposition}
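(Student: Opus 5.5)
The plan is to reduce both statements to elementary facts about vector-valued Sobolev spaces in time, using the fact that $e^{it\La}$ is a unitary group on $H_a^s(\R^3)$ commuting with $\langle\La\rangle^{s/2}$. First I will record that the distorted Fourier transform diagonalizes $\La$. Integrating the eigenfunction equation $\La\phi(\cdot,\xi)=|\xi|^2\phi(\cdot,\xi)$ against $f$ and using self-adjointness gives $\mathcal F_a(\La f)(\xi)=|\xi|^2\mathcal F_af(\xi)$, hence $\mathcal F_a(h(\La)f)=h(|\xi|^2)\mathcal F_af$ for bounded Borel $h$. I would check this first for $f\in C_0^\infty(\R^3\setminus\{0\})$ and then extend by density. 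Combined with the Parseval identity \eqref{Parseval}, this shows that $\|\langle\La\rangle^{s/2}f\|_{L^2}=\|(1+|\xi|^2)^{s/2}\mathcal F_af\|_{L^2_\xi}$. It also shows that conjugation by $e^{-it\La}$ becomes multiplication by $e^{-it|\xi|^2}$, i.e.\ the shift $\tau\mapsto\tau-|\xi|^2$ after the time Fourier transform. These facts justify the second expression for $\|F\|_{X_a^{s,b}}$ in the Definition. I expect this identification, with the accompanying density and completion issues, to be the main technical point; everything else is soft.

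For (1), set $g(t)=e^{-it\La}F(t)$, so that $g\in H^b_t(\R;\mathcal H)$ with $\mathcal H=H_a^s(\R^3)$ a Hilbert space. Since $b>\frac12$, Cauchy--Schwarz in $\tau$ gives
\[
\sup_t\|g(t)\|_{\mathcal H}\le \int_\R\|\widehat g(\tau)\|_{\mathcal H}\,d\tau\le \Big(\int_\R\langle\tau\rangle^{-2b}d\tau\Big)^{1/2}\|g\|_{H^b_t\mathcal H}.
\]
Continuity of $g$ follows from the inversion formula and dominated convergence, since $\widehat g\in L^1_\tau\mathcal H$. Next write $F(t)=e^{it\La}g(t)$. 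Then
\[
\|F(t)-F(t_0)\|_{\mathcal H}\le\|g(t)-g(t_0)\|_{\mathcal H}+\|(e^{it\La}-e^{it_0\La})g(t_0)\|_{\mathcal H}.
\]
The first term tends to zero by continuity of $g$. The second tends to zero by strong continuity of the unitary group on $\mathcal H$ (Stone's theorem, using that $e^{it\La}$ commutes with $\langle\La\rangle^{s/2}$). Together with the uniform bound, this gives $\|F\|_{C(\R,H_a^s)}\lesssim\|F\|_{X_a^{s,b}}$ on Schwarz functions, and the embedding extends to the completion.

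For (2), the weights $(1+|\tau-|\xi|^2|^2)^b$ and $(1+|\xi|^2)^s$ are both at least $1$. Hence they are monotone in $b$ and in $s$, which gives $\|F\|_{X_a^{s',b'}}\le\|F\|_{X_a^{s,b}}$ on $\R\times\R^3$. Given $u\in X_a^{s,b}(I)$, any extension $F$ with $F|_{I}=u$ is also admissible in the infimum defining $\|u\|_{X_a^{s',b'}(I)}$. Taking the infimum over all such $F$ yields $\|u\|_{X_a^{s',b'}(I)}\le\|u\|_{X_a^{s,b}(I)}$, which is the claimed embedding.
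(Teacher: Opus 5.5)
The paper gives no proof of this proposition. It is listed as a standard property of Bourgain spaces, on the understanding (stated for the lemmas that follow) that the Euclidean arguments carry over once $\mathcal F_a$ replaces $\mathcal F$. Your argument is exactly that standard proof, and it is correct. In (1) you apply the embedding $H^b_t(\R;\mathcal H)\hookrightarrow C_b(\R;\mathcal H)$ for $b>\frac12$ to the profile $g=e^{-it\La}F$, then use unitarity and strong continuity of $e^{it\La}$ on $H_a^s$. In (2) you use monotonicity of the weights and take the infimum over extensions.

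First remark: the identification of the norm through $\mathcal F_a$, which you call the main technical point, is not needed anywhere. Part (1) uses only the expression $\|e^{-it\La}F\|_{H^b_tH^s_a}$. Part (2) follows from that same expression, since $\langle\tau\rangle^{2b'}\le\langle\tau\rangle^{2b}$, and $\|h\|_{H_a^{s'}}\le\|h\|_{H_a^s}$ for $s'\le s$ by the spectral theorem (because $\La\ge0$ gives $(1+\lambda)^{s'}\le(1+\lambda)^s$). You should bypass that step, because the justification you sketch does not work as stated. For $0\le a<\frac34$, the operator $-\Delta+a|x|^{-2}$ on $C_0^\infty(\R^3\setminus\{0\})$ is not essentially self-adjoint. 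So this space is not dense in $D(\La)$ in the graph norm, and checking $\mathcal F_a(\La f)=|\xi|^2\mathcal F_af$ there does not single out the Friedrichs extension among the self-adjoint extensions. That would need an extra form-domain argument, using that only the regular Bessel solution lies in $H^1$ near the origin.

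Second remark: to call the extended map from the abstract completion into $C(\R,H_a^s)$ an embedding, you should note that it is injective. This is immediate. If $F_n$ is Cauchy in $X_a^{s,b}$ and $F_n\to0$ uniformly in $H_a^s$, then the $H^b_tH^s_a$-limit of $e^{-it\La}F_n$ is also its local-in-time $L^2_tH^s_a$-limit, hence zero.
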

	Next we state the homogeneous and inhomogeneous estimates for solutions to the linear Schr\"odinger
	equations in Bourgain spaces, see Burq-G\'erard-Tzvetkov \cite{BGT-Invent} for the detailed proofs.
	\begin{lemma}[Homogeneous Strichartz estimate]
		Let $s, b > 0$ and $f \in H_a^s(\R^3)$,
		, then there exists a constant $C > 0$
		such that
		\begin{align*}
			\big\|\psi(t)e^{it\La}f\big\|_{X_a^{s,b}}\leq C\|f\|_{H_a^{s}(\R^3)},
		\end{align*}
		where $\psi(t)$ is a compactly supported function.
	\end{lemma}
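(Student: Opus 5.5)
We reduce the statement to a pure time-localization estimate for the free group $e^{it\La}$, which is unitary on every $H_a^s$. The starting point is the definition of the norm: since $e^{-it\La}\big(\psi(t)e^{it\La}f\big)=\psi(t)f$, we have
\begin{align*}
\big\|\psi(t)e^{it\La}f\big\|_{X_a^{s,b}}=\big\|\psi(t)f\big\|_{H_t^b(\R,H_a^s(\R^3))}.
\end{align*}
The function $t\mapsto\psi(t)f$ is a scalar function of time times a fixed vector of $H_a^s$. Hence its $H_t^bH_a^s$ norm factorizes exactly as $\|\psi\|_{H^b(\R)}\|f\|_{H_a^s}$. This gives the claim with $C=\|\psi\|_{H^b(\R)}$, which is finite as long as $\psi$ is smooth and compactly supported (or merely lies in $H^b$).

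To make this rigorous with the spectral definition, we use the distorted Fourier transform $\mathcal F_a$ and the Parseval identity \eqref{Parseval}. The space-time transform of $e^{-it\La}F$ for $F=\psi(t)e^{it\La}f$ equals $\hat\psi(\tau)\,(\mathcal F_af)(\xi)$. Equivalently, writing the weight in the Bourgain norm as the translate by $|\xi|^2$, the transform of $F$ itself is $\hat\psi(\tau-|\xi|^2)(\mathcal F_af)(\xi)$. The double integral then separates:
\begin{align*}
\int_{\R^3}(1+|\xi|^2)^s|\mathcal F_af(\xi)|^2\int_\R(1+|\tau|^2)^b|\hat\psi(\tau)|^2\,d\tau\,d\xi=\|\psi\|_{H^b}^2\|f\|_{H_a^s}^2.
\end{align*}
Here we used the change of variables $\tau\mapsto\tau+|\xi|^2$ for each fixed $\xi$ (Fubini/Tonelli applies since everything is nonnegative). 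We also used that $\mathcal F_a$ diagonalizes $\La$ with symbol $|\xi|^2$, so that $\|f\|_{H_a^s}^2=\int(1+|\xi|^2)^s|\mathcal F_af|^2d\xi$. This last fact follows from $\La\phi(\cdot,\xi)=|\xi|^2\phi(\cdot,\xi)$ together with the Plancherel lemma.

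The only step needing care is justifying $\mathcal F_a(e^{it\La}f)(\xi)=e^{it|\xi|^2}\mathcal F_af(\xi)$ (with the paper's sign convention) for $f$ in a dense class. We would establish it first for Schwartz data supported away from the origin, via the eigenfunction equation and functional calculus. We then extend by density, using unitarity in \eqref{Parseval}. Nothing about the potential enters beyond this spectral representation, so no restriction on $a\ge0$ or on radiality is needed. Alternatively, we can simply cite Burq--G\'erard--Tzvetkov, whose abstract argument holds for any self-adjoint nonnegative operator.
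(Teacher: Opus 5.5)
Your argument is correct and is the same one-line computation the paper defers to by citing Burq--G\'erard--Tzvetkov, transplanted to the spectral calculus of $\La$: since $e^{-it\La}(\psi e^{it\La}f)=\psi f$, the norm factorizes exactly as $\|\psi\|_{H^b(\R)}\|f\|_{H_a^s}$, so the distorted-Fourier verification is a consistency check rather than a needed step. You are also right to require $\psi\in H^b(\R)$ (e.g.\ $\psi\in C_c^\infty$), because compact support alone does not suffice: an interval indicator, for instance, fails for $b\geq\frac12$.
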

	\begin{lemma}[Inhomogeneous Strichartz estimate]
		Let $0 < b^\prime <
		\frac12$
		and $0 < b < 1 - b^\prime$. Then for all $f \in X_a^{s,-b^\prime}([-T,T])$, the inhomogeneous term $w(t, x) = \int_0^t
		e^{i(t-s)\La} f(s)ds$ satisfies the following estimate
		\begin{align*}
			\big\|\psi(t/T )w(t,x)\big\|_{X_a^{s,b}}\leq  CT^{1-b-b^\prime}\|f\|_{X_a^{s,-b^\prime}([-T,T])},
		\end{align*}
		where $\psi(t)$ is the same function as the above lemma.
	\end{lemma}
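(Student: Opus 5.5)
The plan is to conjugate away the Schr\"odinger flow and reduce the estimate to a purely one-dimensional statement in time. This reduction is possible because, by the definition of $X_a^{s,b}$ via the distorted Fourier transform $\mathcal F_a$ and the Plancherel identity \eqref{Parseval}, the space $H_a^s$ is a weighted $L^2_\xi$ space. First, pick an extension $\tilde f\in X_a^{s,-b'}$ of $f$ with $\|\tilde f\|_{X_a^{s,-b'}}\le 2\|f\|_{X_a^{s,-b'}([-T,T])}$. Since $w$ on $[-T,T]$ depends only on $f$ there, it suffices to prove the bound with $\tilde f$ in place of $f$. Set $F(t):=e^{-it\La}\tilde f(t)$. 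Then
\[
e^{-it\La}\psi(t/T)w(t)=\psi(t/T)\int_0^t F(s)\,ds ,
\]
and by definition $\|\psi(t/T)w\|_{X_a^{s,b}}=\big\|\psi(t/T)\int_0^tF\big\|_{H^b_tH^s_a}$ and $\|\tilde f\|_{X_a^{s,-b'}}=\|F\|_{H^{-b'}_tH^s_a}$. Applying $\mathcal F_a$ in $x$, the operator $F\mapsto \psi(t/T)\int_0^tF$ acts on each fiber $\xi$ separately, and the weight $(1+|\xi|^2)^s$ is independent of $t$. Hence, by Fubini, it is enough to prove the scalar estimate
\begin{equation*}
\Big\|\psi(t/T)\int_0^t g(s)\,ds\Big\|_{H^b(\R)}\le C\,T^{1-b-b'}\|g\|_{H^{-b'}(\R)},\qquad 0<T\le 1,
\end{equation*}
uniformly in $g$, and then integrate it in $\xi$ against $(1+|\xi|^2)^s$.

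For the scalar estimate I would follow the argument in Burq--G\'erard--Tzvetkov \cite{BGT-Invent} (going back to Ginibre). Writing $g$ through its time Fourier transform gives
\[
\int_0^t g=\int \hat g(\tau)\,\frac{e^{it\tau}-1}{i\tau}\,d\tau ,
\]
and I would split the $\tau$-integral into $|\tau|\le T^{-1}$ and $|\tau|>T^{-1}$.

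\emph{Low frequencies.} On $|\tau|\le T^{-1}$, expand $\frac{e^{it\tau}-1}{i\tau}=\sum_{k\ge1}\frac{i^{k-1}t^k\tau^{k-1}}{k!}$. Each term is $\frac{t^k\psi(t/T)}{k!}\int_{|\tau|\le 1/T}\tau^{k-1}\hat g$. We have $\|t^k\psi(t/T)\|_{H^b}\lesssim T^{k+\frac12-b}$ (with constants growing at most geometrically in $k$). By Cauchy--Schwarz,
\[
\Big|\int_{|\tau|\le1/T}\tau^{k-1}\hat g\Big|\lesssim T^{-(k-1)}\,T^{-\frac12+b'}\,\|g\|_{H^{-b'}} .
\]
Multiplying gives $T^{1-b-b'}$, and the factorials make the series summable.

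\emph{High frequencies.} On $|\tau|>T^{-1}$, split $\frac{e^{it\tau}-1}{i\tau}$ into two pieces. The piece $\psi(t/T)\,\mathcal F^{-1}\big[\mathbf 1_{|\tau|>1/T}\hat g/(i\tau)\big]$ is handled by the multiplier $\langle\tau\rangle^{b+b'}/|\tau|\lesssim T^{1-b-b'}$ on $|\tau|>1/T$, which uses $b+b'<1$. The constant piece is $\psi(t/T)\int_{|\tau|>1/T}\hat g/(i\tau)\,d\tau$. Here Cauchy--Schwarz against $|\tau|^{-1}\langle\tau\rangle^{b'}$ on $|\tau|>1/T$ gives a factor $T^{\frac12-b'}$, which requires $b'<\frac12$. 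This is combined with $\|\psi(\cdot/T)\|_{H^b}\lesssim T^{\frac12-b}$.

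The main obstacle is the step of the high-frequency piece where one must show that multiplication by $\psi(t/T)$ is bounded on $H^b$ with a constant that does not destroy the power of $T$. When $b>\frac12$ this constant can grow like $T^{\frac12-b}$, so the bookkeeping must be done in the right order. I would first handle this by interpolating between $H^0$ and $H^1$ for the multiplication operator. If that loses too much, I would fall back to the scaling $t\mapsto Tt$, which reduces to $T=1$ but requires tracking the inhomogeneous weights $\langle\tau\rangle$ under dilation. Once the scalar estimate holds, integrating over $\xi$ and taking the infimum over extensions yields the lemma.
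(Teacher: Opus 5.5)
Your route is the one the paper intends. The paper gives no argument beyond citing Burq--G\'erard--Tzvetkov with $\mathcal F$ replaced by $\mathcal F_a$, and that is exactly your fiberwise reduction followed by the Ginibre/BGT scalar lemma. The reduction, the constant high-frequency piece, and the low-frequency series are fine, with one slip. Cauchy--Schwarz gives $|\int_{|\tau|\le 1/T}\tau^{k-1}\hat g|\lesssim T^{-(k-1)}T^{-\frac12-b'}\|g\|_{H^{-b'}}$, not $T^{-\frac12+b'}$, since $\int_{|\tau|\le1/T}\langle\tau\rangle^{2b'}\,d\tau\sim T^{-1-2b'}$. With this exponent the product is indeed $T^{1-b-b'}$ as you state. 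Your restriction $0<T\le 1$ is also necessary, not just convenient: for $b+b'>\frac12$, a fixed $g$ with $\int g\neq0$ makes the left side grow like $T^{1/2}$ as $T\to\infty$.

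The step you flag is left open, and your first proposed fix fails as stated. Interpolating the full multiplication operator $h\mapsto\psi(\cdot/T)h$ between $L^2$ (norm $O(1)$) and $H^1$ (norm $O(T^{-1})$) yields only $O(T^{-b})$ on $H^b$. In fact no bound for the full operator can beat $T^{\frac12-b}$ when $b>\frac12$. The missing observation is that $h=\mathcal F^{-1}[\mathbf 1_{|\tau|>1/T}\hat g/(i\tau)]$ has Fourier support in $|\tau|>1/T$. Using $\langle\tau\rangle^b\lesssim\langle\tau-\sigma\rangle^b+\langle\sigma\rangle^b$ and Young's inequality,
\[
\|\psi(\cdot/T)h\|_{H^b}\lesssim\|\hat\psi\|_{L^1}\|h\|_{H^b}+T^{-b}\|\langle\sigma\rangle^b\hat\psi\|_{L^1}\|h\|_{L^2},
\]
and $T^{-b}\|h\|_{L^2}\le\|h\|_{H^b}$ on that support. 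Equivalently, you can interpolate on the subspace of such $h$, where the $H^1$ bound is uniform in $T$.

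Your scaling fallback also works without loss. Set $\tilde g(s)=Tg(Ts)$; then $\psi(t/T)\int_0^t g=\big(\psi\int_0^{\cdot}\tilde g\big)(t/T)$. For $T\le1$ one has $\langle\sigma/T\rangle\le T^{-1}\langle\sigma\rangle$ and $\langle T\tau\rangle\ge T\langle\tau\rangle$. These give $\|k(\cdot/T)\|_{H^b}\le T^{\frac12-b}\|k\|_{H^b}$ and $\|\tilde g\|_{H^{-b'}}\le T^{\frac12-b'}\|g\|_{H^{-b'}}$. So the $T=1$ case, where multiplication by the fixed $\psi$ is harmless, already yields the factor $T^{1-b-b'}$. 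With either fix your argument is complete.
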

	Notice that when replacing the Fourier transform by distorted Fourier transform $\mathcal{F}_a$, the proof will follows immediately with minor modification. Therefore, we omit the details for interesting readers.
	
	\subsection{Spherical harmonics decomposition}
	For any $u\in L^2(\R^3)$, we may write
	\begin{equation*}
		u(x) = \sum_{k=0}^\infty \sum_{\ell =1}^{d(k)} Y_{k,\ell}(r) Y_{k,\ell}(\theta),
	\end{equation*} 
	where $k\in \mathbb{N}\cup \{0\}$, $d(k) = 2k+1$ and $\{Y_{k,\ell}\}_{\ell=0}^{d(k)}$ denote the orthogonal basis of the space of spherical harmonics of degree $k$ on $\mathbb{S}^2$. 
	
	We define a Hankel transform for $u(x)$:
	\begin{equation*}
		(\H_{\nu}u)(\xi) = \int_0^\infty (r\rho)^{-\frac{1}{2}} J_{\nu}(r\rho) f(r\omega)r^2 \dd r,
	\end{equation*}
	where $J_k$ denotes the Bessel function of order $k$:
	\begin{equation*}
		J_k(r) = \frac{r^k}{2^k\Gamma(k+1/2)\Gamma(1/2)}\int_{-1}^1 e^{isr} (1-s^2)^{(2k-1)/2} \dd s,
	\end{equation*}
	with $k>-1/2$, $r>0$ and $\Gamma$ denotes the standard Gamma function.
	
	Let
	\begin{equation*}
		\mu(k) = k+\frac{1}{2},
	\end{equation*}
	one verifies that
	\begin{equation*}
		\H_{\mu(k)} [a_{k,\ell}(r) Y_{k,\ell}(\theta)](\xi) = Y_{k,\ell}(\omega) [\H_{\mu(k)} a_{k,\ell}](\rho).
	\end{equation*}
	Denote that
	\begin{equation*}
		\nu(k) = \sqrt{\mu(k)^2 + a},\,\, a>-\frac{1}{4},
	\end{equation*}
	and
	\begin{equation*}
		A_{\nu(k)} := -\partial^2_r -\frac{2}{r}\partial_{r} + [\nu(k)^2-\frac{1}{4}] r^{-2}.
	\end{equation*}
	
	Then, the Hankel transform has the following properties:
	\begin{enumerate}
		\item $\H_{\nu} = \H_{\nu}^{-1}$,\\
		\item $\H_{\nu}$ is self-adjoint: $\H_{\nu} = \H_{\nu}^*$,\\
		\item $\H_{\nu}$ is $L^2$ isometry, that is $\|\H_{\nu} f\|_{L^2} = \|f\|_{L^2}$,\\
		\item $\H_{\nu(k)}(A_{\nu(k)}f)(\xi) = |\xi|^2 (\H_{\nu(k)}f)(\xi)$ if $f\in L^2$.
	\end{enumerate}

	At the end of this section, we discuss the asymptotic behavior of Bessel functions.
	\begin{lemma}\label{lem-symp-Bessel}
		Let $J_\nu(r)$ denotes the Bessel function of order $\nu$ with $\nu>-\frac{1}{2}$. Then, the following statements hold
		\begin{enumerate}
			\item $|J_\nu(r)| \lesssim z^{\nu}$ if $0<z\ll \sqrt{\nu + 1}$,\\
			\item $|J_\nu(r)| \lesssim z^{-\frac{1}{2}}$ if $z\gg |\nu^2-\frac{1}{4}|$.
		\end{enumerate}
	\end{lemma}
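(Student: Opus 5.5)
We prove both bounds from the Poisson-type integral representation of $J_\nu$ recalled earlier in this section:
\[
J_\nu(r)=\frac{r^\nu}{2^\nu\Gamma(\nu+\frac12)\Gamma(\frac12)}\int_{-1}^1 e^{isr}(1-s^2)^{\nu-\frac12}\,\dd s,\qquad \nu>-\tfrac12 .
\]
Throughout, implicit constants may depend on $\nu$, which is all that the later applications require. Here $z$ in the statement denotes the argument $r$.

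\emph{Step 1 (small argument).} We bound the oscillatory factor trivially by $1$. The remaining integral is a Beta integral:
\[
\int_{-1}^1(1-s^2)^{\nu-\frac12}\,\dd s=\frac{\Gamma(\frac12)\Gamma(\nu+\frac12)}{\Gamma(\nu+1)}.
\]
This gives $|J_\nu(r)|\le r^\nu/(2^\nu\Gamma(\nu+1))$ for every $r>0$, and in particular statement (1). The integral converges exactly because $\nu-\frac12>-1$.

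\emph{Step 2 (large argument, localization).} For (2) we must gain $r^{-\nu-\frac12}$ from the oscillatory integral $I(r)=\int_{-1}^1 e^{isr}(1-s^2)^{\nu-\frac12}\dd s$. Choose a smooth partition of unity $1=\chi_-(s)+\chi_0(s)+\chi_+(s)$ on $[-1,1]$, with $\chi_\pm$ supported within distance $\frac12$ of $\pm1$ and $\chi_0$ compactly supported in $(-1,1)$. On the support of $\chi_0$ the weight is smooth, so repeated integration by parts in $s$ gives $O_N(r^{-N})$ for that piece.

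\emph{Step 3 (endpoint pieces).} Near $s=1$ substitute $s=1-t$. The piece becomes $e^{ir}\int_0^{1/2}e^{-itr}t^{\alpha}\,b(t)\,\dd t$, where $\alpha=\nu-\frac12>-1$ and $b(t)=(2-t)^{\alpha}\chi_+(1-t)$ is smooth and compactly supported in $[0,\frac12)$. We claim this integral is $O(r^{-\alpha-1})$. Split at $t=1/r$ with a smooth cutoff.
\begin{itemize}
\item On $t\lesssim 1/r$, the trivial bound gives $\int_0^{2/r}t^\alpha\,\dd t\sim r^{-\alpha-1}$, which is finite since $\alpha>-1$.
\item On $t\gtrsim 1/r$, integrate by parts once using $e^{-itr}=\frac{i}{r}\partial_t e^{-itr}$. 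The derivative falling on $t^\alpha b(t)$ (and on the cutoff) produces $r^{-1}\int_{1/r}^{1}t^{\alpha-1}\,\dd t\lesssim r^{-\alpha-1}$ when $\alpha<0$. If $\alpha\ge 0$ one integrates by parts $\lceil\alpha\rceil+1$ times until the power of $t$ becomes negative; the boundary terms at $t\sim 1/r$ are of the same size.
\end{itemize}
The endpoint $s=-1$ is symmetric. Hence $|I(r)|\lesssim r^{-\nu-\frac12}$, and multiplying by the prefactor $r^\nu$ yields $|J_\nu(r)|\lesssim r^{-1/2}$ for $r\gtrsim 1$. Since the constants depend on $\nu$, this covers $r\gg|\nu^2-\frac14|$ for each fixed $\nu$.

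\emph{Main obstacle.} The delicate point is making the threshold genuinely of the form $r\gg|\nu^2-\frac14|$ with a constant uniform in $\nu$, if that uniformity is wanted (for instance when summing over spherical harmonics with $\nu=\nu(k)$). The integration-by-parts argument above loses factors growing with $\nu$. For that version, the first thing I would try is Hankel's asymptotic expansion with Watson's explicit remainder bound:
\[
J_\nu(r)=\sqrt{\tfrac{2}{\pi r}}\Big(\cos\big(r-\tfrac{\nu\pi}{2}-\tfrac{\pi}{4}\big)+R_\nu(r)\Big),\qquad |R_\nu(r)|\lesssim\frac{|\nu^2-\frac14|}{r}.
\]
Watson's remainder estimate holds once $r\gtrsim|\nu^2-\frac14|$, and it gives (2) with an absolute constant.
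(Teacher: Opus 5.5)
The paper gives no proof of this lemma. It is recalled as a standard fact about Bessel functions and is never invoked later, so there is no argument of the authors to compare with, and your proposal has to stand on its own.

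\textbf{Small argument.} Step 1 is complete, and it is stronger than you claim. Since $2^{-\nu}/\Gamma(\nu+1)$ is bounded uniformly for $\nu>-\frac12$, it gives (1) for every $r>0$ with an absolute constant. The restriction $r\ll\sqrt{\nu+1}$ only matters for the matching lower bound, which the lemma does not assert.

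\textbf{Large argument, fixed $\nu$.} Steps 2--3 are correct for fixed $\nu$, with one wording slip. After $k$ integrations by parts the error is of size $r^{-k}\int_{1/r}^1 t^{\alpha-k}\,\dd t$. This is $O(r^{-\alpha-1})$ only once $\alpha-k<-1$, or when $\alpha$ is an integer and the relevant derivative of $t^\alpha$ vanishes. It is not enough for the power to be merely negative. Your count $\lceil\alpha\rceil+1$ is nevertheless the right one.

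\textbf{What Steps 2--3 actually prove.} With $\nu$-dependent constants, (2) holds for every $r>0$: for $r\le 1$, combine Step 1 with $r^\nu\le r^{-1/2}$. So the hypothesis $r\gg|\nu^2-\frac14|$ is never used. That threshold only has content if the constant is uniform in $\nu$. No argument valid for all $r\ge1$ can be uniform, because $J_\nu(\nu)\approx c\,\nu^{-1/3}\gg\nu^{-1/2}$.

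\textbf{The uniform version.} For this reading your proof reduces to the cited remainder bound, and that citation needs fixing. Watson's theorem (remainder smaller than the first neglected term) requires keeping roughly $\nu/2$ terms ($2p\ge\nu-\frac12$). So it does not give a one-term bound uniformly in $\nu$. The uniform one-term bound is Olver's error bound for Hankel's expansion, which carries a factor $\exp(C|\nu^2-\frac14|/r)$; that factor is exactly where the threshold comes from.

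You can also prove it directly, without citation, from Hankel's integral (valid for $\nu>-\frac12$, $r>0$):
\[
H^{(1)}_\nu(r)=\sqrt{\frac{2}{\pi r}}\,\frac{e^{i\omega}}{\Gamma(\nu+\frac12)}\int_0^\infty e^{-u}u^{\nu-\frac12}\Big(1+\frac{iu}{2r}\Big)^{\nu-\frac12}\,\dd u,\qquad \omega=r-\frac{\nu\pi}{2}-\frac{\pi}{4},
\]
together with $J_\nu=\Re H^{(1)}_\nu$. Set $\mu=\nu-\frac12$ and $w=\frac{iu}{2r}$. The identity $(1+w)^\mu-1=\mu w\int_0^1(1+sw)^{\mu-1}\,\dd s$ and the bounds $1\le|1+sw|\le 1+\frac{u}{2r}$ give $|(1+w)^\mu-1|\le\frac{|\mu|u}{2r}e^{\theta u}$, where $\theta=\frac{(\mu-1)_+}{2r}$. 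In your notation $J_\nu(r)=\sqrt{2/(\pi r)}\,(\cos\omega+R_\nu(r))$, this yields
\[
|R_\nu(r)|\le\frac{|\mu|}{2r\,\Gamma(\mu+1)}\int_0^\infty e^{-(1-\theta)u}u^{\mu+1}\,\dd u=\frac{|\nu^2-\frac14|}{2r}\,(1-\theta)^{-(\mu+2)}.
\]
Note that $\theta\le\theta(\mu+2)\le\frac{|\nu^2-\frac14|}{2r}$. Hence for $r\ge 2|\nu^2-\frac14|$ the last factor is at most $e^{1/2}$. Then $|R_\nu(r)|\le\frac12$ and $|J_\nu(r)|\le 2\sqrt{2/(\pi r)}$, with an absolute constant, in exactly the regime stated in (2).
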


		\section{The proof of bilinear Strichartz estimates}
		In this section, we prove a sharp bilinear Strichartz estimate, i.e. Theorem \ref{thm1}. Our strategy is to utilize the physical space method developed in \cite{P14}. The main ingredient is the interaction Morawetz estimate. Let $u$ satisfies
		\begin{equation}\label{linear eq}
			i u_{t} - \Delta u + \frac{a}{|x|^2}u = 0,
		\end{equation}
		then, we first introduce the interaction Morawetz estimate for $u$.
		In the proof of the frequency-localized estimates below, we assume
		$a\geq0$. Constants may depend on $a$ and the fixed cutoffs, but
		not on the frequencies or the time interval. We retain the original
		range $1\leq M\leq N$ during the estimates and remove the lower
		frequency restriction by scaling at the end.

		\begin{proposition}\label{prop-morawetz}
			Let $u$ and $v$ satisfy \eqref{linear eq}, and let
			$\rho:\R^3\to\R$ be a real-valued $C^4$ function with bounded
			derivatives of orders one through four. Define
			\[
			 I_\rho(t)=\int_{\R^3}\int_{\R^3}
			 |u(t,x)|^2\rho(x-y)|v(t,y)|^2\,\dd x\dd y.
			\]
			Initially assume that the solutions have sufficient regularity and
			decay for the following identities. Then
			\begin{align}
			 \frac{d}{dt}I_\rho(t)
			 ={}&2\Im\iint\nabla\rho(x-y)\cdot
			       \nabla\bar u(x)u(x)|v(y)|^2\,\dd x\dd y\nonumber\\
			 &-2\Im\iint\nabla\rho(x-y)\cdot
			       \nabla\bar v(y)v(y)|u(x)|^2\,\dd x\dd y,
			 \label{eq-interaction-first-full}
			\end{align}
			and
			\begin{align}
			 \frac{d^2}{dt^2}I_\rho(t)
			 ={}&4\iint H_\rho(x-y)(\nabla\bar u(x),\nabla u(x))
			                    |v(y)|^2\,\dd x\dd y\nonumber\\
			 &+4\iint H_\rho(x-y)(\nabla\bar v(y),\nabla v(y))
			                    |u(x)|^2\,\dd x\dd y\nonumber\\
			 &-\iint\Delta\rho(x-y)\Delta_x(|u(x)|^2)
			                    |v(y)|^2\,\dd x\dd y\nonumber\\
			 &-\iint\Delta\rho(x-y)\Delta_y(|v(y)|^2)
			                    |u(x)|^2\,\dd x\dd y\nonumber\\
			 &-8\iint H_\rho(x-y)
			    \big(\Im(\nabla\bar u(x)u(x)),
			         \Im(\nabla\bar v(y)v(y))\big)\,\dd x\dd y\nonumber\\
			 &+4a\iint\left(\frac{x}{|x|^4}-\frac{y}{|y|^4}\right)
			       \cdot\nabla\rho(x-y)|u(x)|^2|v(y)|^2\,\dd x\dd y.
			 \label{eq-interaction-second-full}
			\end{align}
			Here and below, the time variable is suppressed inside spatial
			integrals, all unrestricted double integrals are over
			$\R^3\times\R^3$, and
			\[
			 H_\rho(z)(f,g):=\sum_{j,k=1}^3
			       \partial_j\partial_k\rho(z)f_jg_k,
			 \qquad z\in\R^3,\quad f,g\in\C^3.
			\]
		\end{proposition}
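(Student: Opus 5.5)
The plan is to derive both identities from the local conservation laws for \eqref{linear eq}, and then differentiate $I_\rho$ using the product rule in the two independent variables $x$ and $y$. Writing \eqref{linear eq} as $u_t=-i\big(\Delta u-a|x|^{-2}u\big)$, I would first record the two local identities for a single solution.

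\emph{Mass density.} Since the potential is real, it drops out:
\[
\partial_t|u|^2=2\Im(\bar u\Delta u)=-2\,\partial_j\Im(\nabla_j\bar u\,u).
\]

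\emph{Momentum density.} With $V=a|x|^{-2}$,
\[
\partial_t\Im(\nabla_k\bar u\,u)=2\,\partial_j\Re(\partial_j\bar u\,\partial_k u)-\tfrac12\partial_k\Delta|u|^2+|u|^2\partial_kV
\]
up to the overall sign fixed by the convention, where $\partial_kV=-2a\,x_k|x|^{-4}$. This is the standard computation for $\partial_t\big(\bar u\nabla u-u\nabla\bar u\big)$, with the potential contributing only the force term $|u|^2\nabla V$. I would check the signs of the mass identity once against the free case $a=0$, since the statement's signs must reduce to the classical Planchon--Vega formulas there.

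\emph{First derivative.} Differentiate $I_\rho$ under the integral. The time derivative falls either on $|u(x)|^2$ or on $|v(y)|^2$, and each is replaced by the divergence form from the mass identity. Integrating by parts in $x$ (resp. $y$) moves the divergence onto $\rho(x-y)$. Because $\nabla_y\rho(x-y)=-\nabla\rho(x-y)$, the two pieces carry opposite signs, which gives \eqref{eq-interaction-first-full}.

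\emph{Second derivative.} Differentiate \eqref{eq-interaction-first-full} once more. There are three groups of terms.
\begin{enumerate}
\item The time derivative hits the momentum density of $u$ (resp. $v$). Inserting the momentum identity and integrating by parts in $x$ once more yields:
\begin{itemize}
\item from $2\partial_j\Re(\partial_j\bar u\,\partial_ku)$, the Hessian term $4\iint H_\rho(x-y)(\nabla\bar u,\nabla u)|v|^2$;
\item from $-\tfrac12\partial_k\Delta|u|^2$, the term $-\iint\Delta\rho\,\Delta_x|u|^2|v|^2$;
\item from the force term, $4a\iint \frac{x}{|x|^4}\cdot\nabla\rho(x-y)|u|^2|v|^2$. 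The $v$-version produces $-\frac{y}{|y|^4}$, again because of the sign flip of $\nabla_y$.
\end{itemize}
\item The time derivative hits the other factor's mass density $|v(y)|^2$ (resp. $|u(x)|^2$). Using the mass identity in $y$ and integrating by parts produces the mixed term $-8\iint H_\rho(x-y)\big(\Im(\nabla\bar u\,u),\Im(\nabla\bar v\,v)\big)$. Both cross contributions add, which accounts for the coefficient $8$.
\item Collecting everything gives \eqref{eq-interaction-second-full}.
\end{enumerate}
The hypothesis that $\rho$ has bounded derivatives of orders one to four guarantees that every integration by parts is legitimate with no boundary terms at infinity, given the assumed decay.

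\emph{Main obstacle.} I expect the main difficulty to be the singular potential at the origin, not the algebra. The force term $x|x|^{-4}|u|^2$ is only borderline integrable, and integrating by parts near $x=0$ can create boundary contributions. My first attempt would be a regularization. Replace $V$ by $V_\varepsilon=a(|x|^2+\varepsilon^2)^{-1}$, for which all computations are classical for Schwartz data. Derive the identities for the approximate flows, and pass to the limit $\varepsilon\to0$ using strong resolvent convergence of the operators together with Hardy-type bounds on $|x|^{-3}|u|^2$ for data in a dense class. Such a class is, for instance, $P^a_N$-localized data in the domain of $\La^2$, where the Bessel asymptotics of Lemma \ref{lem-symp-Bessel} give $|u|\lesssim|x|^{\nu(0)-1/2}$ near $0$ with $\nu(0)\geq\tfrac12$.

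Alternatively, since the statement explicitly assumes enough regularity and decay, the computation can be carried out formally for such solutions, and the density argument is deferred to the point where the identity is applied.
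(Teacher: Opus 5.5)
Your proposal is correct and follows the paper's proof essentially step by step. Like the paper, you use the mass and momentum identities and the product rule in the two independent variables, integrate by parts with $\nabla_y\rho(x-y)=-\nabla\rho(x-y)$, and the two cross contributions combine to $-8\iint H_\rho(\Im(\nabla\bar u\,u),\Im(\nabla\bar v\,v))$.

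Fix the sign you left open. For $iu_t-\Delta u+a|x|^{-2}u=0$ one has $\partial_t\Im(\partial_k\bar u\,u)=\frac12\partial_k\Delta|u|^2-2\sum_j\partial_j\Re(\partial_j\bar u\,\partial_k u)-|u|^2\partial_kV$. This is the negative of what you displayed, and it is the sign that actually produces the stated terms.

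For the justification near the origin, the paper cuts out $\{|x|,|y|<\varepsilon\}$ for finite spherical-harmonic sums, with boundary terms $O(\varepsilon^{2\beta_\ell})$, instead of regularizing $V$. Under the stated regularity hypothesis this difference is secondary.
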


		\begin{proof}
			We retain the single-solution calculation and then compute the
			mixed term separately. Write
			$\Im(\nabla\bar u\,u)$ for the current corresponding to the sign
			convention in \eqref{linear eq}. Directly from the equation,
			\[
			 \partial_t|u|^2=-2\operatorname{div}
			                       \Im(\nabla\bar u\,u).
			\]
			For the classical Morawetz functional
			$I_\rho^u(t):=\int_{\R^3}\rho(x)|u(t,x)|^2\,\dd x$,
			integration by parts therefore gives
			\begin{equation}\label{virial-1}
			 \frac{d}{dt}I_\rho^u(t)
			 =2\Im\int_{\R^3}\nabla\rho(x)\cdot
			                      \nabla\bar u(x)u(x)\,\dd x.
			\end{equation}
			To differentiate once more, use the momentum identity
			\begin{align*}
			 \partial_t\Im(\partial_j\bar u\,u)
			 ={}&\frac12\partial_j\Delta(|u|^2)
			 -2\sum_{k=1}^3\partial_k
			                  \Re(\partial_j\bar u\,\partial_k u)
			 -|u|^2\partial_j\left(\frac{a}{|x|^2}\right).
			\end{align*}
			Multiplying by $2\partial_j\rho$, summing over $j$, and integrating
			by parts yields
			\begin{align}
			 \frac{d^2}{dt^2}I_\rho^u(t)
			 ={}&4\int_{\R^3}H_\rho(x)(\nabla\bar u(x),\nabla u(x))\,\dd x
			 -\int_{\R^3}\Delta\rho(x)\Delta(|u(x)|^2)\,\dd x\nonumber\\
			 &+4a\int_{\R^3}\frac{x}{|x|^4}\cdot\nabla\rho(x)
			                         |u(x)|^2\,\dd x.
			 \label{virial-2}
			\end{align}
			In particular, the coefficient of the potential term is $4a$,
			since $-2\nabla(a|x|^{-2})=4a x|x|^{-4}$.

			Applying \eqref{virial-1} to the $x$ and $y$ variables separately,
			and using $\nabla_y\rho(x-y)=-\nabla\rho(x-y)$, gives
			\begin{align*}
			 \frac{d}{dt}I_\rho(t)
			 ={}&2\Im\iint\nabla\rho(x-y)\cdot
			              \nabla\bar u(x)u(x)|v(y)|^2\,\dd x\dd y\\
			 &-2\Im\iint\nabla\rho(x-y)\cdot
			              \nabla\bar v(y)v(y)|u(x)|^2\,\dd x\dd y.
			\end{align*}
			This proves \eqref{eq-interaction-first-full}.
			Next, the product rule gives the three terms
			\begin{align*}
			 \partial_t^2\big(|u(x)|^2\rho(x-y)|v(y)|^2\big)
			 ={}&\partial_t^2(|u(x)|^2)\rho(x-y)|v(y)|^2\\
			 &+|u(x)|^2\rho(x-y)\partial_t^2(|v(y)|^2)\\
			 &+2\rho(x-y)\partial_t(|u(x)|^2)\partial_t(|v(y)|^2).
			\end{align*}
			By \eqref{virial-2}, the spatial integral of the first two terms is
			\begin{align*}
			 &4\iint H_\rho(x-y)(\nabla\bar u(x),\nabla u(x))
			                                     |v(y)|^2\,\dd x\dd y\\
			 &\quad+4\iint H_\rho(x-y)(\nabla\bar v(y),\nabla v(y))
			                                     |u(x)|^2\,\dd x\dd y\\
			 &\quad-\iint\Delta\rho(x-y)\Delta_x(|u(x)|^2)
			                                     |v(y)|^2\,\dd x\dd y\\
			 &\quad-\iint\Delta\rho(x-y)\Delta_y(|v(y)|^2)
			                                     |u(x)|^2\,\dd x\dd y\\
			 &\quad+4a\iint\left(\frac{x}{|x|^4}-\frac{y}{|y|^4}\right)
			    \cdot\nabla\rho(x-y)|u(x)|^2|v(y)|^2\,\dd x\dd y.
			\end{align*}
			For the mixed term, the continuity identities and two integrations
			by parts give
			\begin{align*}
			 &2\iint\rho(x-y)\partial_t(|u(x)|^2)
			                           \partial_t(|v(y)|^2)\,\dd x\dd y\\
			 &=8\iint\rho(x-y)
			       \operatorname{div}_x\Im(\nabla\bar u(x)u(x))
			       \operatorname{div}_y\Im(\nabla\bar v(y)v(y))\,\dd x\dd y\\
			 &=-8\sum_{j,k=1}^3\iint\partial_j\partial_k\rho(x-y)
			        \Im(\partial_j\bar u(x)u(x))
			        \Im(\partial_k\bar v(y)v(y))\,\dd x\dd y\\
			 &=-8\iint H_\rho(x-y)
			       \big(\Im(\nabla\bar u(x)u(x)),
			            \Im(\nabla\bar v(y)v(y))\big)\,\dd x\dd y.
			\end{align*}
			Combining these terms proves \eqref{eq-interaction-second-full}.

			For completeness, for $a>0$ the calculations can first be made for
			finite spherical-harmonic sums with smooth radial Hankel transforms
			compactly supported away from zero frequency. In degree $\ell$ the
			Friedrichs behavior at the origin is $r^{\beta_\ell}$, where
			\[
			 \beta_\ell=-\frac12+\sqrt{\left(\ell+\frac12\right)^2+a}>0.
			\]
			Performing the action calculation on $|x|,|y|>\varepsilon$, for a
			fixed smooth weight, gives inner boundary terms that vanish as
			$\varepsilon\downarrow0$; the largest possible terms are of order
			$\varepsilon^{2\beta_\ell}$. The force integrals are justified by
			the weighted smoothing estimate used below. The chosen data decay
			rapidly at infinity on compact time intervals. At $a=0$ one instead
			uses Schwartz solutions on all of $\R^3$, with no inner boundary.
			The resulting inequalities extend to $H_a^1$ data by density and
			lower semicontinuity. Thus the estimates below do not require
			finite moments or an angular restriction on the data.
		\end{proof}
		
		Now, we can give the proof   of  our bilinear Strichartz estimates. The first one is in classical version and the second one is the bilinear estimate involving the operator $\La^\frac{s}{2}$ with $s\in[0,1]$.  The proof relies on the interaction Morawetz estimate and the frequency decomposition technique.
		\begin{lemma}\label{le-improved bilinear estimates}
			Let $a\geq0$ and let $1 \leq M \leq N$ be dyadic integers, then there exists $C$ such that
			\begin{equation}\label{eq-A^1 estimate}
				\int_\R\|e^{it\mathcal{L}_a}P_N^a\varphi_1 e^{it\mathcal{L}_a}P_M^a\varphi_2\|_{\dot H_a^1(\mathbb{R}^3)}^2{\rm d}t \leq CM^2N\|P_N^a\varphi_1\|_{L^2(\R^3)}^2\|P_M^a\varphi_2\|_{L^2(\R^3)}^2.
			\end{equation}
		\end{lemma}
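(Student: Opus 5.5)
Write $u_N(t)=e^{it\La}P_N^a\varphi_1$ and $v_M(t)=e^{it\La}P_M^a\varphi_2$. The plan is to reduce \eqref{eq-A^1 estimate} to the two gradient bounds \eqref{eq-H^1 estimate2-in} and \eqref{eq-H^1 estimate1-in}, stated over $t\in\R$.

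\textbf{Reduction to gradients.} Since $u_Nv_M$ lies in $\dot H^1$ for a.e. $t$, Theorem \ref{equivalence} with $s=1$, $p=2$ applies; this is just the Hardy equivalence \eqref{iso}, valid for every $a\ge0$. It gives
\[
\|u_Nv_M\|_{\dot H_a^1}\lesssim\|\nabla(u_Nv_M)\|_{L^2}\le\|v_M\nabla u_N\|_{L^2}+\|u_N\nabla v_M\|_{L^2}.
\]
So it suffices to prove
\[
\int_\R\!\!\int|u_N\nabla v_M|^2+|v_M\nabla u_N|^2\,\dd x\dd t\lesssim M^2N\|P_N^a\varphi_1\|_2^2\|P_M^a\varphi_2\|_2^2.
\]

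\textbf{The term $u_N\nabla v_M$.} This term only needs $\nabla v_M$ at low frequency $M$. I would run the interaction Morawetz identity of Proposition \ref{prop-morawetz} with $u=u_N$ and $v=w$, where $w$ is a component $\partial_jv_M$, handled either through the pair $(u_N,v_M)$ with the gradient weight or directly. The weight is $\rho(z)=\langle z\rangle$, or more generally a convex function with bounded derivatives whose Laplacian-of-Laplacian term gives $-\Delta\Delta\rho\gtrsim\delta$-like concentration.

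The standard route is as follows. Integrating $\frac{d^2}{dt^2}I_\rho$ over $[0,T]$ and bounding $|\frac d{dt}I_\rho|$ via \eqref{eq-interaction-first-full} by $\|\nabla\rho\|_\infty\|u\|_2\|\nabla u\|_2\|v\|_2^2+(\text{sym})$ controls the positive terms. The first three Hessian terms are nonnegative after the usual Cauchy--Schwarz pairing of the current term with the Hessian terms. The term $-\iint\Delta\rho\,\Delta|u|^2|v|^2$ integrates to $\iint(-\Delta\Delta\rho)(x-y)|u(x)|^2|v(y)|^2$, which for $\rho(z)=|z|$ in $\R^3$ gives $8\pi\int|u|^2|v|^2$.

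The potential term is the key new point:
\[
4a\iint\Big(\frac{x}{|x|^4}-\frac{y}{|y|^4}\Big)\cdot\frac{x-y}{|x-y|}|u|^2|v|^2\ge0
\]
for $a\ge0$, because $z\mapsto z/|z|^4=-\frac12\nabla|z|^{-2}$ is the gradient of the convex radial function $-\tfrac12|z|^{-2}$ and hence a monotone map. It can therefore be dropped.

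With the mollified weight $\rho_\lambda(z)=\lambda^{-1}\langle\lambda z\rangle$ at scale $\lambda\sim N$, this yields a localized bound $\int_\R\iint_{|x-y|<N^{-1}}N^{2}|u(x)|^2|v(y)|^2\lesssim N\|u\|_2\|\nabla u\|_2\|v\|_2^2+\dots$. Here $\|\nabla u_N\|_2\sim N\|u_N\|_2$ follows from Lemma \ref{Bernstein} and \eqref{iso}.

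To pass from the $N^{-1}$-ball average to the pointwise product, I would use the localization inequality \eqref{fml-point-bound} with $\La$ in place of $-\Delta$. I would prove that version by Kato's inequality, $\Delta|\phi|\ge-\Re(\bar\phi\,\mathrm{sgn}\,\La\phi)$ when $a\ge0$, applied to the subharmonic-type mean value inequality. Since $\La u_N$ is again frequency $N$ with norm $\sim N^2\|u_N\|_2$, the result is $\int_\R\int|u_N|^2|\nabla v_M|^2\lesssim N^{-1}\cdot\|\nabla v_M\|$-scale factors, giving $M^2N^{-1}\cdot N^{2}$ after bookkeeping. This is exactly $M^2N$ times the $L^2$ norms squared. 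Replacing $v_M$ by $\nabla v_M$ (which costs $M$ in $L^2$, via \eqref{iso}) produces the factor $M^2$. Since $\nabla v_M$ does not solve \eqref{linear eq}, I would instead apply the Morawetz estimate to the pair $(u_N,v_M)$ and insert $\nabla v_M$ through $\Im(\nabla\bar v\,v)$-type terms, or use Bernstein $\|\nabla v_M\|_{L^\infty}$-free arguments.

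\textbf{The term $v_M\nabla u_N$.} Here I would use the full Hessian term $4\iint H_\rho(\nabla\bar u,\nabla u)|v|^2$, which is exactly of the form $|v(y)|^2|\nabla u(x)|^2$ localized to $|x-y|\lesssim N^{-1}$ with weight $\sim N$. Then I would convert the localization to a pointwise product with the same Kato/mean-value lemma, this time applied to $v_M$, which is smooth at scale $M^{-1}\ge N^{-1}$, so the ball average costs nothing beyond $M$-factors.

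\textbf{Main obstacle.} The main obstacle is the global-in-time control with the potential present. One needs the sign of the potential term, which holds for $a\ge0$, and the justification near the origin, which is handled by the density remark after Proposition \ref{prop-morawetz}. The other delicate part is the elliptic localization lemma for $\La$. I expect the factor bookkeeping there, giving $M^2N$ rather than something like $N^3$, to be where the argument must be done carefully.
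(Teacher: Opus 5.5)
Your reduction to the two gradient bounds matches the paper, but the Morawetz part has two gaps.

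\textbf{The potential term has no sign.} The function $-\tfrac12|z|^{-2}$ is not convex: its radial second derivative is $-3|z|^{-4}<0$. Hence $z\mapsto z/|z|^4$ is not monotone. For example, at $x=e_1$, $y=2e_1$ one has $\big(\frac{x}{|x|^4}-\frac{y}{|y|^4}\big)\cdot\frac{x-y}{|x-y|}=-\frac78$. So data with $|u|^2$ concentrated near $e_1$ and $|v|^2$ near $2e_1$ make the term negative at that time, and it cannot be dropped. The missing ingredient is a separate estimate for this term. The paper bounds it by $4a\|\nabla\rho\|_\infty(|x|^{-3}+|y|^{-3})|u(x)|^2|v(y)|^2$ and uses mass conservation in the other variable. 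It then applies the weighted smoothing estimate of Theorem~\ref{thm-locals} with $b=0$, $\alpha=\frac12$ (admissible exactly for $a>0$): $\int_\R\int|u|^2|x|^{-3}\,\dd x\,\dd t\lesssim_a\|\La^{1/4}u(0)\|_2^2\le\|u(0)\|_2\|u(0)\|_{H^1_a}$. This is the same size as the bound on $|I_\rho'|$.

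\textbf{Localizing at scale $N^{-1}$ cannot give $M^2N$} for the hard term $v_M\nabla u_N$; this is exactly the $N^3$ you were worried about. Take a weight with $\|\nabla\rho\|_\infty\lesssim1$ and Hessian $\gtrsim r^{-1}$ on $\{|x-y|<r\}$, which is the best possible with bounded gradient. The Morawetz bound then controls the ball integral by $rN$ times the $L^2$ norms. Lemma~\ref{lemma-ine} with $\lambda=r^{-1}$ converts this into a pointwise product at cost $r^{-3}$, however smooth $v_M$ is. Its two pieces give $r^{-2}N$ and, after rerunning for the solution $\La v_M$, $r^{2}M^4N$. With $r=N^{-1}$ the first piece is $N^3$; the two pieces balance only at $r=M^{-1}$, giving $M^2N$. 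This is why the paper uses $\tilde\rho(z)=\sum_j\rho_M(z_j)$ with $\rho_M''=M\mathbf 1_{\{|z|<M^{-1}\}}$ and $|\rho_M'|\le1$, and applies the lemma with $\lambda=M$ to $v_M$.

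Also, you cannot keep $4\iint H_\rho(\nabla\bar u,\nabla u)|v|^2$ as a positive output, because you already spent it absorbing the unsigned current cross term. The paper instead assembles the Hessian, current and bi-Laplacian terms into $H_\rho(X,\bar X)$ with $X=\bar v(y)\nabla u(x)+u(x)\nabla\bar v(y)$, and peels off $|u_N(x)\nabla\bar v_M(y)|^2$.

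That leftover, like your ``easy'' term $u_N\nabla v_M$, needs no Morawetz. Your Morawetz route for it does not close, since $\partial_jv_M$ is not a solution and $|\La^{1/2}v_M|$ does not control $|\nabla v_M|$ pointwise. Instead, H\"older, Bernstein and endpoint Strichartz give $\|u_N\|_{L^2_tL^\infty_x}\|\nabla v_M\|_{L^\infty_tL^2_x}\lesssim N^{1/2}M\|P_N^a\varphi_1\|_2\|P_M^a\varphi_2\|_2$.
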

		\begin{proof}
			Denote by $u_N:=e^{it\La}P_N^a\varphi_1$ and
			$v_M:=e^{it\La}P_M^a\varphi_2$ for simplicity. Both solve
			\eqref{linear eq}. We retain $\bar v_M$ in the gradient estimates
			and in the positive Hessian expression; complex conjugation does
			not change the absolute values of the gradient products.
			
			By the equivalence of Sobolev norms Theorem \ref{equivalence}, to prove \eqref{eq-A^1 estimate},  it suffices to prove the following two inequalities:
			\begin{equation}\label{eq-H^1 estimate2}
				\int_0^T \int_{\R^3}\big| u_N(x)\nabla_x\bar{v}_M(x) \big|^2{\rm d}x{\rm d}t \leq CM^2N\|P_N^a\varphi_1\|_{L^2(\R^3)}^2\|P_M^a\varphi_2\|_{L^2(\R^3)}^2,
			\end{equation}
			\begin{equation}\label{eq-H^1 estimate1}
				\int_0^T \int_{\R^3}\big| \bar{v}_M(x)\nabla_xu_N(x) \big|^2{\rm d}x{\rm d}t \leq CM^2N\|P_N^a\varphi_1\|_{L^2(\R^3)}^2\|P_M^a\varphi_2\|_{L^2(\R^3)}^2,
			\end{equation}
			For \eqref{eq-H^1 estimate2}, by Cauchy-Schwarz and Bernstein's  inequality, we get
			\begin{align*}
				\big\|u_N\nabla_x\bar{v}_M\big\|_{L_{t,x}^2}\lesssim& \|u_N\|_{L_t^2 L_x^\infty} \|\nabla_x\bar{v}_M\|_{L_t^\infty L_x^2}\\
				\lesssim& N^\frac12\|u_N\|_{L_t^2L_x^6} M \|\bar{v}_M\|_{L_t^\infty L_x^2}\\
				\lesssim&MN^\frac12\|P_N^a\varphi_1\|_{L^2(\R^3)}\|P_M^a\varphi_2\|_{L^2(\R^3)},
			\end{align*}
			which implies  \eqref{eq-H^1 estimate2}.

			
			Next, we only need to prove \eqref{eq-H^1 estimate1}. Before proving this inequality, we introduce a important lemma. This lemma is an analogue of Lemma 4.1 in \cite{PTV21}; its proof below uses only the nonnegativity of the potential.
			\begin{lemma}\label{lemma-ine}
				Let $a\geq0$. For any $\phi\in D(\La)$ and any $\lambda>0$,
				the following pointwise estimate holds for almost every $x\in\R^3$:
				\begin{equation}\label{eq-pointwise-all-a}
				 |\phi(x)|^2\leq C\int_{|x-y|<\lambda^{-1}}
				 \left(\lambda^{-1}|\La\phi(y)|^2
				                     +\lambda^3|\phi(y)|^2\right)\,\dd y.
				\end{equation}
				The constant is independent of $a\geq0$, $\lambda$, and $x$.
			\end{lemma}
			\begin{proof}
				Set $h=\La\phi$. We first verify the local integrability needed
				for Kato's inequality, including at the singularity. Since
				$\phi\in D(\La)\subset H^1(\R^3)$, Hardy's inequality gives
				\[
				 \int_{\R^3}\frac{|\phi(y)|^2}{|y|^2}\,\dd y
				 \leq4\int_{\R^3}|\nabla\phi(y)|^2\,\dd y.
				\]
				For every $R>0$, Cauchy--Schwarz therefore yields
				\begin{align*}
				 \int_{|y|<R}\frac{|\phi(y)|}{|y|^2}\,\dd y
				 &\leq\left(\int_{|y|<R}\frac{|\phi(y)|^2}{|y|^2}\,\dd y\right)^{1/2}
				       \left(\int_{|y|<R}\frac{\dd y}{|y|^2}\right)^{1/2}\\
				 &\leq2(4\pi R)^{1/2}\|\nabla\phi\|_2<\infty.
				\end{align*}
				Thus $|y|^{-2}\phi\in L^1_{\mathrm{loc}}(\R^3)$ and
				$\Delta\phi=a|y|^{-2}\phi-h\in L^1_{\mathrm{loc}}(\R^3)$.
				Follows from the Kato inequality in the distribution sense (see the proof in \cite{BP}),
				\begin{align*}
				 -\Delta|\phi|
				 \leq\Re\left(\frac{\bar\phi}{|\phi|}(-\Delta\phi)\right)=\Re\left(\frac{\bar\phi}{|\phi|}h\right)
				                         -\frac{a}{|y|^2}|\phi|
				 \leq |h|,
				\end{align*}
				where the quotient is set to zero on $\{\phi=0\}$ and the last inequality follows from $a\geq0$.

				Fix $x\in\R^3$, set $R=\lambda^{-1}$ and $B=B(x,R)$, and define
				\[
				 w(z)=\frac1{4\pi}\int_B\frac{|h(y)|}{|z-y|}\,\dd y.
				\]
				Then $w\geq0$ and $-\Delta w=|h|$ in $B$. Hence
				$\Delta(|\phi|-w)\geq0$ in $B$, so that $|\phi|-w$ is
				subharmonic. The mean-value inequality at a Lebesgue point gives
				\begin{align*}
				 |\phi(x)|-w(x)
				 &\leq\frac1{|B|}\int_B\big(|\phi(y)|-w(y)\big)\,\dd y\\
				 &\leq\frac1{|B|}\int_B|\phi(y)|\,\dd y.
				\end{align*}
				The two terms on the resulting right-hand side satisfy
				\begin{align*}
				 \frac1{|B|}\int_B|\phi(y)|\,\dd y
				 &\leq C R^{-3/2}\|\phi\|_{L^2(B)},\\
				 w(x)&\leq\frac1{4\pi}
				       \left(\int_B|h(y)|^2\,\dd y\right)^{1/2}
				       \left(\int_B|x-y|^{-2}\,\dd y\right)^{1/2}\\
				 &\leq C R^{1/2}\|h\|_{L^2(B)}.
				\end{align*}
				Here $\int_{B(x,R)}|x-y|^{-2}\,\dd y=4\pi R$.
				Squaring and substituting $R=\lambda^{-1}$ proves
				\eqref{eq-pointwise-all-a}. In particular, no comparison between
				$\Delta\phi$ and $\La\phi$ in $L^2$ is needed.
			\end{proof}
			
			From the above Lemma \ref{lemma-ine}, \eqref{eq-H^1 estimate1} can estimated by
			\begin{align}\label{eq-derivative for N}
				& \int_0^T\Big( \int_{\R^3}\big| \bar{v}_M(x)\nabla_xu_N(x) \big|^2{\rm d}x \Big){\rm d}t \\
				&\leq   C\int_0^T\Big( \iint_{|x-y|<\frac1M}M^3\big| \bar{v}_M(y)\nabla_x u_N(x) \big|^2
				+ \frac{1}{M}\big| \mathcal{L}_a\bar{v}_M(y)\nabla_x u_N(x) \big|^2{\rm d}x{\rm d}y \Big){\rm d}t.
			\end{align}
			Next we claim that
			\begin{align}\label{eq-claim H}
			 &\int_0^T\iint_{|x-y|<M^{-1}}H_{\tilde\rho}(x-y)
			 \Big(\bar v_M(y)\nabla_xu_N(x)+u_N(x)\nabla_y\bar v_M(y),\nonumber\\
			 &\hspace{48mm}v_M(y)\nabla_x\bar u_N(x)
			                   +\bar u_N(x)\nabla_yv_M(y)\Big)\,\dd x\dd y\dd t\nonumber\\
			 &\quad\leq C\|\nabla\tilde\rho\|_\infty
			 \Big(\|v_M(0)\|_2^2\|u_N(0)\|_2\|u_N(0)\|_{H_a^1}
			       +\|u_N(0)\|_2^2\|v_M(0)\|_2\|v_M(0)\|_{H_a^1}\Big),
			\end{align}
			where $\tilde\rho(z)=\sum_{j=1}^3\rho_M(z_j)$ and the convex
			function $\rho_M$ is specified below. We retain all the steps
			leading from the interaction identity to this localized estimate.

			By Proposition~\ref{prop-morawetz}, Cauchy--Schwarz, and
			conservation of mass and energy, we first have
			\begin{align}\label{eq-one derivative forI}
			 |I_\rho'(t)|
			 &\leq2\|\nabla\rho\|_\infty
			       \Big(\|v(t)\|_2^2\|u(t)\|_2\|\nabla u(t)\|_2
			            +\|u(t)\|_2^2\|v(t)\|_2\|\nabla v(t)\|_2\Big)\nonumber\\
			 &\lesssim\|\nabla\rho\|_\infty
			       \Big(\|v(0)\|_2^2\|u(0)\|_2\|u(0)\|_{H_a^1}
			            +\|u(0)\|_2^2\|v(0)\|_2\|v(0)\|_{H_a^1}\Big).
			\end{align}
			Here $\|\nabla u\|_2\leq\|\La^{1/2}u\|_2$ follows directly
			from $a\geq0$.

			We next rewrite the second derivative as in the original
			interaction argument. To justify the rewriting explicitly,
			expansion of the Hessian form gives
			\begin{align*}
			 &4H_\rho(x-y)\Big(\bar v\nabla u+u\nabla\bar v,
			                              v\nabla\bar u+\bar u\nabla v\Big)\\
			 &=4|v|^2H_\rho(x-y)(\nabla\bar u,\nabla u)
			   +4|u|^2H_\rho(x-y)(\nabla\bar v,\nabla v)\\
			 &\quad+2H_\rho(x-y)(\nabla|u|^2,\nabla|v|^2)\\
			 &\quad-8H_\rho(x-y)
			       \big(\Im(\nabla\bar u\,u),\Im(\nabla\bar v\,v)\big).
			\end{align*}
			The density term satisfies, by integration in $x$ and then in $y$,
			\begin{align*}
			 &\iint H_\rho(x-y)(\nabla_x|u(x)|^2,\nabla_y|v(y)|^2)\,\dd x\dd y\\
			 &=\sum_{j,k=1}^3\iint\partial_j\partial_k\rho(x-y)
			             \partial_{x_j}|u(x)|^2\partial_{y_k}|v(y)|^2\,\dd x\dd y=-\iint\Delta^2\rho(x-y)|u(x)|^2|v(y)|^2\,\dd x\dd y.
			\end{align*}
			Consequently, twice this integral is exactly the sum of the two
			terms involving $\Delta\rho$ in
			\eqref{eq-interaction-second-full}. Thus
			\begin{align}\label{eq-interaction-square-full}
			 I_\rho''(t)
			 ={}&4\iint H_\rho(x-y)
			 \Big(\bar v(y)\nabla u(x)+u(x)\nabla\bar v(y),\nonumber\\
			 &\hspace{38mm}v(y)\nabla\bar u(x)+\bar u(x)\nabla v(y)\Big)\,\dd x\dd y\nonumber\\
			 &+4a\iint\left(\frac{x}{|x|^4}-\frac{y}{|y|^4}\right)
			      \cdot\nabla\rho(x-y)|u(x)|^2|v(y)|^2\,\dd x\dd y.
			\end{align}
			The bi-Laplacian contribution has therefore been included, rather
			than discarded.

			For a convex $\rho$, the Hessian form in
			\eqref{eq-interaction-square-full} is nonnegative. Integrating
			in time, we obtain
			\begin{align*}
			 &4\int_0^T\iint H_\rho(x-y)
			 \Big(\bar v(y)\nabla u(x)+u(x)\nabla\bar v(y),\\
			 &\hspace{40mm}v(y)\nabla\bar u(x)+\bar u(x)\nabla v(y)\Big)
			                                         \,\dd x\dd y\dd t\\
			 &\quad\leq |I_\rho'(0)|+|I_\rho'(T)|\\
			 &\qquad+4a\int_0^T\iint
			     \left|\left(\frac{x}{|x|^4}-\frac{y}{|y|^4}\right)
			                    \cdot\nabla\rho(x-y)\right|
			                         |u(x)|^2|v(y)|^2\,\dd x\dd y\dd t.
			\end{align*}

			For $a>0$, use Theorem~\ref{thm-locals} with $b=0$ and
			$\alpha=1/2$. The admissibility condition becomes
			\[
			 \frac12<\frac14+\frac12\mu_0(3,a)
			          =\frac14+\frac12\sqrt{\frac14+a},
			\]
			which holds for every $a>0$. Since no spherical harmonics are
			excluded when $b=0$, we have for arbitrary initial data
			\begin{equation}\label{eq-smoothing-positive-force}
			 \int_0^T\int_{\R^3}\frac{|u(t,x)|^2}{|x|^3}\,\dd x\dd t
			 \lesssim_a\|\La^{1/4}u(0)\|_2^2
			 \leq\! C_a\|u(0)\|_2\|u(0)\|_{H_a^1}.
			\end{equation}
			The last inequality follows from spectral Cauchy--Schwarz:
			$\|\La^{1/4}h\|_2^2\leq\|h\|_2\|\La^{1/2}h\|_2$.
			The same estimate holds for $v$. We can now estimate the force
			term, keeping the two contributions separate:
			\begin{align*}
			 &4a\int_0^T\iint
			     \left|\left(\frac{x}{|x|^4}-\frac{y}{|y|^4}\right)
			                     \cdot\nabla\rho(x-y)\right|
			                       |u(x)|^2|v(y)|^2\,\dd x\dd y\dd t\\
			 &\quad\leq4a\|\nabla\rho\|_\infty
			     \int_0^T\iint\left(|x|^{-3}+|y|^{-3}\right)
			                       |u(x)|^2|v(y)|^2\,\dd x\dd y\dd t\\
			 &\quad\leq4a\|\nabla\rho\|_\infty\Bigg(
			       \|v(0)\|_2^2\int_0^T\int_{\R^3}|x|^{-3}|u(x)|^2\,\dd x\dd t\\
			 &\hspace{54mm}+\|u(0)\|_2^2
			             \int_0^T\int_{\R^3}|y|^{-3}|v(y)|^2\,\dd y\dd t\Bigg)\\
			 &\quad\lesssim_a\|\nabla\rho\|_\infty
			 \Big(\|v(0)\|_2^2\|u(0)\|_2\|u(0)\|_{H_a^1}
			      +\|u(0)\|_2^2\|v(0)\|_2\|v(0)\|_{H_a^1}\Big).
			\end{align*}
			For $a=0$ the force term is identically zero, so no weighted
			smoothing estimate at $a=0$ is needed. In particular, we make no
			claim that \eqref{eq-smoothing-positive-force} holds at $a=0$.
			Combining the force estimate with
			\eqref{eq-one derivative forI}, for every $a\geq0$ we have
			\begin{align}\label{eq-Hessian}
			 &\int_0^T\iint H_\rho(x-y)
			 \Big(\bar v(y)\nabla u(x)+u(x)\nabla\bar v(y),\nonumber\\
			 &\hspace{38mm}v(y)\nabla\bar u(x)+\bar u(x)\nabla v(y)\Big)
			                                         \,\dd x\dd y\dd t\nonumber\\
			 &\quad\leq C\|\nabla\rho\|_\infty
			 \Big(\|v(0)\|_2^2\|u(0)\|_2\|u(0)\|_{H_a^1}
			      +\|u(0)\|_2^2\|v(0)\|_2\|v(0)\|_{H_a^1}\Big).
			\end{align}
			The constant is independent of $T$.

			Next, following \cite{P14}, define the convex function
			$\rho_M:\R\to\R$ by
			\[
			 \rho_M(z)=
			 \begin{cases}
			    \dfrac{Mz^2}{2}+\dfrac1{2M},& |z|\leq M^{-1},\\[1mm]
			    |z|,& |z|>M^{-1}.
			 \end{cases}
			\]
			Its first and second derivatives are
			\[
			 \rho_M'(z)=
			 \begin{cases}Mz,&|z|<M^{-1},\\
			               \operatorname{sgn}(z),&|z|>M^{-1},\end{cases}
			 \qquad
			 \rho_M''(z)=M\mathbf1_{\{|z|<M^{-1}\}}.
			\]
			The first derivative is continuous at $\pm M^{-1}$, so there are
			no boundary delta terms in the second derivative. In particular,
			$\|\rho_M'\|_\infty\leq1$. We first apply
			\eqref{eq-Hessian} to a convex smooth approximation of
			$\rho_M(x_1-y_1)$ and then pass to the limit. The Hessian has
			only one nonzero entry, namely
			$M\mathbf1_{\{|x_1-y_1|<M^{-1}\}}$ in the $(1,1)$ position.
			Consequently,
			\begin{align}\label{eq-direction-one-retained}
			 &M\int_0^T\iint_{|x_1-y_1|<M^{-1}}
			 \big|\bar v(y)\partial_{x_1}u(x)
			                    +u(x)\partial_{y_1}\bar v(y)\big|^2
			                                     \,\dd x\dd y\dd t\nonumber\\
			 &\quad\leq C\Big(
			    \|v(0)\|_2^2\|u(0)\|_2\|u(0)\|_{H_a^1}
			   +\|u(0)\|_2^2\|v(0)\|_2\|v(0)\|_{H_a^1}\Big).
			\end{align}
			There is no contribution from $|x_1-y_1|>M^{-1}$ because the
			Hessian vanishes there. Applying the same calculation to
			$\rho_M(x_j-y_j)$ for $j=2,3$ gives, for each $j=1,2,3$,
			\begin{align*}
			 &M\int_0^T\iint_{|x_j-y_j|<M^{-1}}
			 \big|\bar v(y)\partial_{x_j}u(x)
			                    +u(x)\partial_{y_j}\bar v(y)\big|^2
			                                     \,\dd x\dd y\dd t\\
			 &\quad\leq C\Big(
			    \|v(0)\|_2^2\|u(0)\|_2\|u(0)\|_{H_a^1}
			   +\|u(0)\|_2^2\|v(0)\|_2\|v(0)\|_{H_a^1}\Big).
			\end{align*}

			To pass from the three slabs to the ball, observe explicitly that
			\[
			 \{|x-y|<M^{-1}\}
			 \subset\bigcap_{j=1}^3\{|x_j-y_j|<M^{-1}\}.
			\]
			Since each integrand is nonnegative, we can restrict its domain
			to the ball and sum the three inequalities. This yields
			\begin{align*}
			 &M\int_0^T\iint_{|x-y|<M^{-1}}
			       |\bar v(y)\nabla_xu(x)+u(x)\nabla_y\bar v(y)|^2
			                                      \,\dd x\dd y\dd t\\
			 &\quad\leq M\sum_{j=1}^3\int_0^T
			      \iint_{|x_j-y_j|<M^{-1}}
			       |\bar v(y)\partial_{x_j}u(x)
			                 +u(x)\partial_{y_j}\bar v(y)|^2
			                                      \,\dd x\dd y\dd t\\
			 &\quad\lesssim
			    \|v(0)\|_2^2\|u(0)\|_2\|u(0)\|_{H_a^1}
			   +\|u(0)\|_2^2\|v(0)\|_2\|v(0)\|_{H_a^1}.
			\end{align*}
			Equivalently, on this ball the weight
			$\tilde\rho(z)=\sum_{j=1}^3\rho_M(z_j)$ has Hessian $M$ times
			the identity matrix and $\|\nabla\tilde\rho\|_\infty\leq\sqrt3$.
			This also proves \eqref{eq-claim H}.
			Now replace $u,v$ by $u_N,v_M$. Since $1\leq M\leq N$,
			\[
			 \|u_N(0)\|_{H_a^1}\lesssim N\|P_N^a\varphi_1\|_2,
			 \qquad
			 \|v_M(0)\|_{H_a^1}\lesssim M\|P_M^a\varphi_2\|_2.
			\]
			Dividing the preceding estimate by $M$, we obtain
			\begin{align}\label{eq-1morewarz reduce estimate}
			 &\int_0^T\iint_{|x-y|<M^{-1}}
			 \big|u_N(x)\nabla_y\bar v_M(y)
			              +\nabla_xu_N(x)\bar v_M(y)\big|^2
			                                      \,\dd x\dd y\dd t\nonumber\\
			 &\quad\leq C M^{-1}N
			       \|P_N^a\varphi_1\|_2^2\|P_M^a\varphi_2\|_2^2.
			\end{align}

			Next, $\La v_M$ is a solution with the same spectral support and
			$\|\La v_M(0)\|_2\lesssim M^2\|P_M^a\varphi_2\|_2$.
			Applying \eqref{eq-1morewarz reduce estimate} to this solution gives
			\begin{align}\label{eq-1morewarz-La-retained}
			 &\int_0^T\iint_{|x-y|<M^{-1}}
			 \big|u_N(x)\nabla_y(\La\bar v_M)(y)
			        +\nabla_xu_N(x)(\La\bar v_M)(y)\big|^2
			                                     \,\dd x\dd y\dd t\nonumber\\
			 &\quad\leq C M^{-1}N
			           \|P_N^a\varphi_1\|_2^2\|\La v_M(0)\|_2^2\nonumber\\
			 &\quad\leq C NM^3
			           \|P_N^a\varphi_1\|_2^2\|P_M^a\varphi_2\|_2^2.
			\end{align}
			Here $\La\bar v_M=\overline{\La v_M}$ because $\La$ has real
			coefficients; the solution to which the identity is applied is
			$\La v_M$, not its complex conjugate.

			We now combine the pointwise estimate with these two ball
			estimates. Pointwise in $(t,x,y)$,
			\begin{align*}
			 |\bar v_M(y)\nabla_xu_N(x)|^2
			 &\leq2|\bar v_M(y)\nabla_xu_N(x)
			                       +u_N(x)\nabla_y\bar v_M(y)|^2\\
			 &\quad+2|u_N(x)\nabla_y\bar v_M(y)|^2,\\
			 |(\La\bar v_M)(y)\nabla_xu_N(x)|^2
			 &\leq2|(\La\bar v_M)(y)\nabla_xu_N(x)
			                       +u_N(x)\nabla_y(\La\bar v_M)(y)|^2\\
			 &\quad+2|u_N(x)\nabla_y(\La\bar v_M)(y)|^2.
			\end{align*}
			Substitution into \eqref{eq-derivative for N}, followed by
			\eqref{eq-1morewarz reduce estimate} and
			\eqref{eq-1morewarz-La-retained}, yields
			\begin{align*}
			 &\int_0^T\int_{\R^3}|\bar v_M(x)\nabla_xu_N(x)|^2\,\dd x\dd t\\
			 &\quad\leq C\int_0^T\iint_{|x-y|<M^{-1}}
			 \Big(M^3|u_N(x)\nabla_y\bar v_M(y)|^2
			       +M^{-1}|u_N(x)\nabla_y(\La\bar v_M)(y)|^2\Big)
			                                                  \,\dd x\dd y\dd t\\
			 &\qquad+C\big(M^3(M^{-1}N)+M^{-1}(NM^3)\big)
			       \|P_N^a\varphi_1\|_2^2\|P_M^a\varphi_2\|_2^2\\
			 &\quad\leq C\int_0^T\iint_{|x-y|<M^{-1}}
			 \Big(M^3|u_N(x)\nabla_y\bar v_M(y)|^2
			       +M^{-1}|u_N(x)\nabla_y(\La\bar v_M)(y)|^2\Big)
			                                                  \,\dd x\dd y\dd t\\
			 &\qquad+CNM^2\|P_N^a\varphi_1\|_2^2\|P_M^a\varphi_2\|_2^2.
			\end{align*}

			It remains to bound both displayed error integrals. For the first,
			make the change of variables $w=y-x$. Translation invariance of
			the spatial $L^2$ norm, not of the Schr\"odinger equation, gives
			\begin{align}\label{eq-v_M estimate}
			 &M^3\int_0^T\iint_{|x-y|<M^{-1}}
			                    |u_N(x)\nabla_y\bar v_M(y)|^2\,\dd x\dd y\dd t\nonumber\\
			 &=M^3\int_{|w|<M^{-1}}\int_0^T\int_{\R^3}
			                  |u_N(x)|^2|\nabla\bar v_M(x+w)|^2\,\dd x\dd t\dd w\nonumber\\
			 &\leq M^3\int_{|w|<M^{-1}}
			        \|u_N\|_{L_t^2((0,T);L_x^\infty)}^2
			        \|\nabla v_M\|_{L_t^\infty((0,T);L_x^2)}^2\,\dd w\nonumber\\
			 &\lesssim M^3\int_{|w|<M^{-1}}
			         N\|u_N\|_{L_t^2((0,T);L_x^6)}^2
			         M^2\|v_M(0)\|_2^2\,\dd w\nonumber\\
			 &\lesssim M^3\big|B(0,M^{-1})\big|NM^2
			          \|P_N^a\varphi_1\|_2^2\|P_M^a\varphi_2\|_2^2\nonumber\\
			 &\lesssim NM^2\|P_N^a\varphi_1\|_2^2\|P_M^a\varphi_2\|_2^2.
			\end{align}
			Here we used the endpoint Strichartz estimate, Bernstein's
			inequality, the quadratic form bound, and
			$|B(0,M^{-1})|=\frac{4\pi}{3}M^{-3}$.

			For the second integral, retain the same argument with $v_M$
			replaced by $\La v_M$. Keeping track of its prefactor gives
			\begin{align}\label{eq-Av_M estimate}
			 &M^{-1}\int_0^T\iint_{|x-y|<M^{-1}}
			             |u_N(x)\nabla_y(\La\bar v_M)(y)|^2\,\dd x\dd y\dd t\nonumber\\
			 &=M^{-4}\left[M^3\int_0^T\iint_{|x-y|<M^{-1}}
			             |u_N(x)\nabla_y(\La\bar v_M)(y)|^2\,\dd x\dd y\dd t\right]\nonumber\\
			 &\lesssim M^{-4}NM^2
			                \|P_N^a\varphi_1\|_2^2\|\La v_M(0)\|_2^2\nonumber\\
			 &\lesssim M^{-4}NM^2M^4
			                \|P_N^a\varphi_1\|_2^2\|P_M^a\varphi_2\|_2^2\nonumber\\
			 &\lesssim NM^2\|P_N^a\varphi_1\|_2^2\|P_M^a\varphi_2\|_2^2.
			\end{align}
			Combining \eqref{eq-v_M estimate} and \eqref{eq-Av_M estimate}
			with the preceding bound proves \eqref{eq-H^1 estimate1}.
			Both gradient estimates are uniform in $T$, and the same argument
			applies on $(-T,0)$. Letting $T\to\infty$ gives the corresponding
			global estimates. Finally,
			\begin{align*}
			 \|\La^{1/2}(u_Nv_M)\|_2^2
			 &=\|\nabla(u_Nv_M)\|_2^2
			                   +a\||x|^{-1}u_Nv_M\|_2^2\\
			 &\leq(1+4a)\|\nabla(u_Nv_M)\|_2^2\\
			 &\leq2(1+4a)
			       \big(\|v_M\nabla u_N\|_2^2+\|u_N\nabla v_M\|_2^2\big).
			\end{align*}
			The absolute values of the factors in these two terms are
			unchanged when $v_M$ and $\nabla v_M$ are conjugated. Thus
			\eqref{eq-H^1 estimate1}--\eqref{eq-H^1 estimate2} imply
			\eqref{eq-A^1 estimate}, which completes the proof of the lemma.
		\end{proof}

		We now prove the $L^2$ bilinear estimate for arbitrary initial data.
		The divergence identity below eliminates the singular potential term,
		so no decomposition into spherical harmonics is required.

		\begin{theorem}
			\label{Theorem-linear bilinear estimate}
			Let $1 \leq M \leq N$ be dyadic integers and $a\geq0$. Then there exists $C$ such that
			\begin{align*}
				\big\| e^{it\mathcal{L}_a}P_N^a\varphi_1 e^{it\mathcal{L}_a}P_M^a\varphi_2 \big\|_{L_t^2(\R;L_x^2(\R^3))}^2 \leq CM^2N^{-1}\|P_N^a\varphi_1\|_{L_x^2(\R^3)}^2\|P_M^a\varphi_2\|_{L_x^2(\R^3)}^2
			\end{align*}
		\end{theorem}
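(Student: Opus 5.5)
We will derive the $L^2$ bilinear bound from the $\dot H_a^1$ bilinear estimate of Lemma~\ref{le-improved bilinear estimates} by a Littlewood--Paley decomposition of the product $u_Nv_M$, where $u_N=e^{it\La}P_N^a\varphi_1$ and $v_M=e^{it\La}P_M^a\varphi_2$. We write
\[
u_Nv_M=P^a_{>\delta N}(u_Nv_M)+P^a_{\le\delta N}(u_Nv_M),
\]
with $\delta>0$ a small fixed constant. The two pieces are handled differently.

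\emph{High output frequency.} For $P^a_{>\delta N}(u_Nv_M)$ we use the spectral theorem. Since the spectral multiplier $\lambda^{-1}\mathbf 1_{\lambda>\delta N}$ of $\sqrt{\La}$ is bounded by $(\delta N)^{-1}$,
\[
\|P^a_{>\delta N}F\|_{L^2}\le (\delta N)^{-1}\|\La^{1/2}F\|_{L^2}.
\]
Squaring, integrating in time, and applying \eqref{eq-A^1 estimate} gives
\[
\int_\R\|P^a_{>\delta N}(u_Nv_M)\|_2^2\,\dd t\lesssim N^{-2}\cdot M^2N\,\|P_N^a\varphi_1\|_2^2\|P_M^a\varphi_2\|_2^2,
\]
which is the desired $M^2N^{-1}$ bound.

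\emph{Low output frequency.} For $P^a_{\le\delta N}(u_Nv_M)$ we follow the outline in the introduction. Spectral localization lets us write $u_N=N^{-2}\La\widetilde u_N$, where $\widetilde u_N=e^{it\La}\widetilde\phi_N(\sqrt{\La})\varphi_1$ with $\widetilde\phi(\lambda)=\lambda^{-2}\phi(\lambda)$. Then
\[
u_Nv_M=N^{-2}\Big(\widetilde u_N\La v_M+\operatorname{div}\big(\widetilde u_N\nabla v_M-v_M\nabla\widetilde u_N\big)\Big).
\]
The first term is $N^{-2}\widetilde u_N\cdot(\La v_M)$. Since $\La v_M$ is a solution at frequency $M$ with norm $\lesssim M^2\|P_M^a\varphi_2\|_2$, a crude estimate suffices here. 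Using H\"older with $L_t^2L_x^\infty\times L_t^\infty L_x^2$, endpoint Strichartz and Bernstein, this term is bounded by
\[
N^{-2}\,N^{1/2}M^{2}\,\|P_N^a\varphi_1\|_2\|P_M^a\varphi_2\|_2,
\]
and $N^{-3/2}M^2\le MN^{-1/2}$ because $M\le N$.

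The divergence term is the main obstacle. We need an $L^2$ bound for the spectrally localized divergence, of the form
\[
\|P^a_{\le\delta N}\operatorname{div}G\|_{L^2}\lesssim N\|G\|_{L^2},
\]
or equivalently $\|\operatorname{div}G\|_{\dot H_a^{-1}}\lesssim\|G\|_{L^2}$. By duality this reduces to $\|\nabla h\|_2\le\|\La^{1/2}h\|_2$, which holds for $a\ge0$ (Hardy direction). Combining this with Bernstein, $\|\La^{1/2}P^a_{\le\delta N}h\|_2\lesssim N\|h\|_2$, the divergence term is controlled by
\[
N^{-1}\Big(\|\widetilde u_N\nabla v_M\|_{L^2_{t,x}}+\|v_M\nabla\widetilde u_N\|_{L^2_{t,x}}\Big).
\]
These are exactly the gradient bilinear quantities \eqref{eq-H^1 estimate2}, \eqref{eq-H^1 estimate1}, applied with $\widetilde u_N$ in place of $u_N$; the same proof applies since $\widetilde u_N$ is again spectrally supported at $N$. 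Each is bounded by $M N^{1/2}\|P_N^a\varphi_1\|_2\|P_M^a\varphi_2\|_2$, so the divergence term is $\lesssim MN^{-1/2}$ times the data norms, as required.

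We expect this duality step to need the most care: we must make sure the conjugates and the real-coefficient structure let \eqref{eq-H^1 estimate1} apply to $\widetilde u_N$, and that the dual bound $\|\nabla h\|_2\le\|\La^{1/2}h\|_2$ is used in the correct direction. Finally, the restriction $1\le M$ is removed by rescaling $x\mapsto \lambda x$. This is legitimate because $\La$ is scale-invariant and both sides of the estimate scale the same way, so the bound extends to all dyadic $M\le N$, giving Theorem~\ref{thm1}.
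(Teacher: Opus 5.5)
Your proposal is correct and follows essentially the same route as the paper. The paper also splits $u_Nv_M$ by output frequency, controls the high part through the spectral theorem and the $\dot H_a^1$ bilinear lemma, and controls the low part by writing $u_N=N^{-2}\La\widetilde u_N$, using the potential-free divergence identity, the dual bound $\|P^a_{\le N}\operatorname{div}X\|_2\lesssim N\|X\|_2$, the gradient estimates for $(\widetilde u_N,v_M)$, and the crude bound on $N^{-2}\widetilde u_N\La v_M$. The only differences are cosmetic: your threshold $\delta N$ plays no role (the paper simply takes $\delta=1$), and your closing rescaling remark belongs to the next theorem rather than to this statement.
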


		\begin{proof}
			
			For simplicity, we denote
 $u_N : = e^{it\mathcal{L}_a}P_N^a\varphi_1$, $v_M : = e^{it\mathcal{L}_a}P_M^a\varphi_2$.
			We perform a Littlewood--Paley decomposition of the product
			$v_Mu_N$, separating $K\leq N$ from $K>N$. Then
			\begin{align*}
			 \|v_Mu_N\|_{L_{t,x}^2}^2
			 &=\left\|\sum_{K\in2^\Z}P_K^a(v_Mu_N)\right\|_{L_{t,x}^2}^2\\
			 &\leq2\left\|\sum_{K\in2^\Z,\ K\leq N}
			                       P_K^a(v_Mu_N)\right\|_{L_{t,x}^2}^2
			     +2\left\|\sum_{K\in2^\Z,\ K>N}
			                       P_K^a(v_Mu_N)\right\|_{L_{t,x}^2}^2\\
			 &=:2I_1+2I_2.
			\end{align*}
			\textbf{Low frequency:} For $I_1$, let
			\[
			 S_N=\sum_{K\in2^\Z,\ K\leq N}P_K^a=P_{\leq N}^a.
			\]
			On the spectral support of $u_N$, define
			$\widetilde u_N=N^2\La^{-1}u_N$. Then
			$\widetilde u_N$ solves \eqref{linear eq}, has the same spectral
			support as $u_N$, and satisfies
			\[
			 \La\widetilde u_N=N^2u_N,
			 \qquad
			 \|\widetilde u_N(0)\|_2\lesssim\|P_N^a\varphi_1\|_2.
			\]
			Thus the original insertion of two derivatives becomes
			\begin{equation}\label{Sn}
			 S_N(v_Mu_N)=N^{-2}S_N(v_M\La\widetilde u_N).
			\end{equation}
			Instead of expanding $\La(v_M\widetilde u_N)$, subtract the
			term with $\La$ acting on $v_M$. Direct calculation gives
			\begin{align}\label{eq-divergence-cancellation-retained}
			 v_M\La\widetilde u_N-\widetilde u_N\La v_M
			 &=-v_M\Delta\widetilde u_N+\widetilde u_N\Delta v_M\nonumber\\
			 &=\operatorname{div}
			        (\widetilde u_N\nabla v_M-v_M\nabla\widetilde u_N).
			\end{align}
			The two multiplication-potential terms cancel identically,
			and so do the cross terms when the divergence is expanded.
			In particular, no estimate for $|x|^{-2}\widetilde u_Nv_M$
			is needed.

			We next prove the low-frequency divergence bound used to estimate
			\eqref{eq-divergence-cancellation-retained}. Since $S_N$ is a
			real spectral multiplier supported where $\sqrt{\La}\leq2N$,
			positivity of the quadratic form gives
			\[
			 \|\nabla S_Nh\|_2^2
			 \leq\|\La^{1/2}S_Nh\|_2^2
			 \lesssim N^2\|h\|_2^2.
			\]
			For $X\in L^2(\R^3;\C^3)$, integration by parts and duality yield
			\begin{align*}
			 |\langle S_N\operatorname{div}X,h\rangle|
			 &=|\langle X,-\nabla S_Nh\rangle|\\
			 &\leq\|X\|_2\|\nabla S_Nh\|_2
			 \lesssim N\|X\|_2\|h\|_2.
			\end{align*}
			Therefore, with the divergence understood distributionally,
			\begin{equation}\label{eq-low-divergence-retained}
			 \|S_N\operatorname{div}X\|_2\lesssim N\|X\|_2.
			\end{equation}

			Combining \eqref{Sn} and
			\eqref{eq-divergence-cancellation-retained}, we obtain
			\begin{align*}
			 S_N(v_Mu_N)
			 ={}&N^{-2}S_N\operatorname{div}
			          (\widetilde u_N\nabla v_M-v_M\nabla\widetilde u_N)\\
			 &+N^{-2}S_N(\widetilde u_N\La v_M).
			\end{align*}
			Consequently, by \eqref{eq-low-divergence-retained},
			\begin{align*}
			 I_1
			 &\lesssim N^{-2}
			   \left(\|\widetilde u_N\nabla v_M\|_{L_{t,x}^2}^2
			          +\|v_M\nabla\widetilde u_N\|_{L_{t,x}^2}^2\right)
			       +N^{-4}\|\widetilde u_N\La v_M\|_{L_{t,x}^2}^2.
			\end{align*}
			The two gradient terms are precisely the estimates
			\eqref{eq-H^1 estimate2} and \eqref{eq-H^1 estimate1}, applied to
			$\widetilde u_N,v_M$ (conjugation does not change their absolute
			values). Hence
			\begin{align*}
			 N^{-2}\left(\|\widetilde u_N\nabla v_M\|_{L_{t,x}^2}^2
			          +\|v_M\nabla\widetilde u_N\|_{L_{t,x}^2}^2\right)\lesssim N^{-2}M^2N
			          \|\widetilde u_N(0)\|_2^2\|v_M(0)\|_2^2
\lesssim M^2N^{-1}
			          \|P_N^a\varphi_1\|_2^2\|P_M^a\varphi_2\|_2^2.
			\end{align*}
			For the remaining term, Bernstein's inequality, the endpoint
			Strichartz estimate, and spectral localization give
			\begin{align*}
			 \|\widetilde u_N\La v_M\|_{L_{t,x}^2}^2
			\leq\|\widetilde u_N\|_{L_t^2L_x^\infty}^2
			            \|\La v_M\|_{L_t^\infty L_x^2}^2
			 \lesssim N\|\widetilde u_N\|_{L_t^2L_x^6}^2
			                   M^4\|v_M(0)\|_2^2
			 \lesssim NM^4\|P_N^a\varphi_1\|_2^2\|P_M^a\varphi_2\|_2^2.
			\end{align*}
			It follows, using $M\leq N$, that
			\begin{align*}
			 I_1
			 &\lesssim\big(M^2N^{-1}+M^4N^{-3}\big)
			               \|P_N^a\varphi_1\|_2^2\|P_M^a\varphi_2\|_2^2\\
			 &\lesssim M^2N^{-1}
			               \|P_N^a\varphi_1\|_2^2\|P_M^a\varphi_2\|_2^2.
			\end{align*}

			\textbf{High frequency:} We retain the high-output-frequency
			argument. The spectral supports of the dyadic multipliers have
			bounded overlap. Thus, by the spectral theorem and
			Lemma~\ref{le-improved bilinear estimates},
			\begin{align*}
			 I_2
			 &\lesssim\sum_{K\in2^\Z,\ K>N}
			               \|P_K^a(v_Mu_N)\|_{L_{t,x}^2}^2\\
			 &\lesssim\sum_{K\in2^\Z,\ K>N}K^{-2}
			               \|P_K^a\La^{1/2}(v_Mu_N)\|_{L_{t,x}^2}^2\\
			 &\lesssim N^{-2}\sum_{K\in2^\Z,\ K>N}
			               \|P_K^a\La^{1/2}(v_Mu_N)\|_{L_{t,x}^2}^2\\
			 &\lesssim N^{-2}\|\La^{1/2}(v_Mu_N)\|_{L_{t,x}^2}^2\\
			 &\lesssim M^2N^{-1}
			               \|P_N^a\varphi_1\|_2^2\|P_M^a\varphi_2\|_2^2.
			\end{align*}
			Combining the estimates for $I_1$ and $I_2$ completes the proof.
			All estimates can first be taken on finite time intervals and
			then passed to the global interval, since their constants are
			independent of the interval.
	\end{proof}
	We now remove the restriction $M\geq1$ by scaling and obtain
	the global bilinear estimate for all positive dyadic frequencies.
	\begin{theorem}
		\label{Theorem- bilinear estimate}
		Let $a\geq0$ and let $M,N\in2^\Z$ satisfy $M\leq N$. Then there exists $C$ such that
		\begin{align*}
			\big\| e^{it\mathcal{L}_a}P_N^a\varphi_1 e^{it\mathcal{L}_a}P_M^a\varphi_2 \big\|_{L_t^2(\R;L_x^2(\R^3))}^2 \leq CM^{2}N^{-1}\|P_N^a\varphi_1\|_{L_x^2(\R^3)}^2\|P_M^a\varphi_2\|_{L_x^2(\R^3)}^2.
		\end{align*}
	\end{theorem}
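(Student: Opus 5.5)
The plan is to reduce the statement to Theorem~\ref{Theorem-linear bilinear estimate}, which already covers $1\leq M\leq N$, by exploiting the scaling invariance of $\La$. The inverse-square potential has the same homogeneity as the Laplacian. So for $\lambda>0$ and the $L^2$-unitary dilation $(D_\lambda f)(x)=\lambda^{3/2}f(\lambda x)$, we have $\La D_\lambda=\lambda^2D_\lambda\La$. By the functional calculus this gives
\[
 F(\La)D_\lambda=D_\lambda F(\lambda^2\La)
\]
for every bounded Borel function $F$. In particular,
\[
 e^{it\La}D_\lambda=D_\lambda e^{i\lambda^2t\La},
 \qquad
 P_N^aD_\lambda=D_\lambda P_{N/\lambda}^a .
\]
The second identity holds because $\phi_N(\sqrt{\lambda^2\La})=\phi_{N/\lambda}(\sqrt{\La})$. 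The constant in Theorem~\ref{Theorem-linear bilinear estimate} depends only on $a$ and the fixed cutoff $\varphi$, and both are preserved by this rescaling. This is why the scaling argument closes.

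Given dyadic $M\leq N$ with $M<1$, set $\lambda=M\in2^{\Z}$. We write $\varphi_j=D_\lambda\psi_j$, with $\psi_j=D_{\lambda}^{-1}\varphi_j$. Then
\[
 e^{it\La}P_N^a\varphi_1=D_\lambda\big(e^{i\lambda^2t\La}P_{N/\lambda}^a\psi_1\big),
\]
and similarly for the second factor with $M/\lambda=1$. For the product of two dilated functions, $(D_\lambda f)(D_\lambda g)=\lambda^{3/2}D_\lambda(fg)$. Using that $D_\lambda$ is unitary on $L^2_x$ and substituting $s=\lambda^2t$ in time, we get
\[
 \big\|e^{it\La}P_N^a\varphi_1\,e^{it\La}P_M^a\varphi_2\big\|_{L^2_{t,x}}^2
 =\lambda^{3}\lambda^{-2}\big\|e^{is\La}P_{N/\lambda}^a\psi_1\,e^{is\La}P_{1}^a\psi_2\big\|_{L^2_{s,x}}^2 .
\]

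Now apply Theorem~\ref{Theorem-linear bilinear estimate} with frequencies $1\leq N/\lambda$. This bounds the right side by
\[
 C\lambda\,(N/\lambda)^{-1}\|P^a_{N/\lambda}\psi_1\|_2^2\|P_1^a\psi_2\|_2^2
 =C\lambda^2N^{-1}\|P_N^a\varphi_1\|_2^2\|P_M^a\varphi_2\|_2^2 ,
\]
where we used $\|P^a_{N/\lambda}\psi_1\|_2=\|D_\lambda^{-1}P_N^a\varphi_1\|_2=\|P_N^a\varphi_1\|_2$, and likewise for $\psi_2$. Since $\lambda=M$, the bound is $CM^2N^{-1}$, as claimed. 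The case $M\geq1$ is Theorem~\ref{Theorem-linear bilinear estimate} itself.

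There is no real obstacle here. The only point needing care is the exact commutation $P_N^aD_\lambda=D_\lambda P^a_{N/\lambda}$. It requires that $D_\lambda$ preserve the Friedrichs domain of $\La$, which holds because dilation preserves $C_0^\infty(\R^3\setminus\{0\})$ and scales the quadratic form by $\lambda^2$. Beyond that, the exponent bookkeeping of $\lambda$ must be tracked correctly: a factor $\lambda^{3}$ from the product, $\lambda^{-2}$ from the time change, and $\lambda$ from $(N/\lambda)^{-1}$.
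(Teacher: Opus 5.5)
Your proposal is correct and is essentially the paper's proof. The paper also uses the unitary dilation $D_\lambda$ with $\La D_\lambda=\lambda^2D_\lambda\La$ to move the frequency pair $(N,M)$ to $(N/M,1)$, applies Theorem~\ref{Theorem-linear bilinear estimate}, and tracks the single factor $\lambda^{3}\cdot\lambda^{-2}=\lambda$; the only difference is cosmetic, since the paper dilates the data by $\lambda=M^{-1}$ directly while you write the data as $D_M\psi_j$.
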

	\begin{proof}
		The estimate for $1\leq M\leq N$ follows from
		Theorem~\ref{Theorem-linear bilinear estimate}, without an angular
		restriction. To include every dyadic $0<M\leq N$, define the
		unitary dilation $D_\lambda h(x)=\lambda^{3/2}h(\lambda x)$.
		The homogeneity of the inverse-square potential gives
		\[
		 \La D_\lambda=\lambda^2D_\lambda\La,
		 \qquad
		 P_{\lambda K}^aD_\lambda=D_\lambda P_K^a,
		 \qquad
		 e^{it\La}D_\lambda=D_\lambda e^{i\lambda^2t\La}.
		\]
		Set $\lambda=M^{-1}$. Then
		$D_\lambda P_N^a\varphi_1$ and
		$D_\lambda P_M^a\varphi_2$ have frequencies $N/M$ and $1$,
		respectively, and their $L^2$ norms are unchanged. A change of
		variables gives
		\begin{align*}
		 &\big\|e^{it\La}D_\lambda P_N^a\varphi_1\,
		             e^{it\La}D_\lambda P_M^a\varphi_2\big\|_{L_{t,x}^2}^2\\
		 &\qquad=\lambda
		 \big\|e^{it\La}P_N^a\varphi_1\,
		             e^{it\La}P_M^a\varphi_2\big\|_{L_{t,x}^2}^2.
		\end{align*}
		Applying the already proved estimate at frequencies $1,N/M$
		and multiplying by $\lambda^{-1}=M$ yields exactly the desired
		factor $M^2N^{-1}$. This proves Theorem~\ref{thm1} as well.
		The same dilation extends \eqref{eq-A^1 estimate} to all positive
		dyadic frequencies; its squared left-hand side scales as
		$\lambda^3$ rather than $\lambda$.
	\end{proof}

	We also show the bilinear estimates involving the derivatives, which is crucial in establishing the global well-posedness in Theorem \ref{thm: nonlinearmain}.
	\begin{proposition}[Bilinear Strichartz estimates II]Let $\rho\in[0,\frac12)$, then
		\begin{gather}
			\big\|\La^\frac{\rho}{2}(e^{it\La}fe^{it\La}g)\big\|_{L_{t,x}^2([-T,T]\times\R^3)}\leq C\|f\|_{\dot H_a^{\frac12+\rho}(\R^3)}\|g\|_{L^2(\R^3)},\label{low}\\
			\big\|\La^\frac{\rho}{2}(e^{it\La}u_1e^{it\La}u_2)\big\|_{L_{t,x}^2([-T,T]\times\R^3)}\leq C\|u_1\|_{X_a^{\frac12+\rho,\frac12+}(I)}\|u_2\|_{X_a^{0,\frac12+}(I)}\label{low-B}
		\end{gather}
		Moreover, if $\frac12\leq\rho<\frac12+\sigma$ and $0\leq\sigma\leq1$, it holds
		\begin{gather}
			\big\|\La^\frac{\rho}{2}(e^{it\La}fe^{it\La}g)\big\|_{L_{t,x}^2([-T,T]\times\R^3)}\leq C\|f\|_{\dot H_a^{\sigma}(\R^3)}\|g\|_{\dot H_a^{\rho+\frac12-\sigma}(\R^3)},\label{high}\\
			\big\|\La^\frac{\rho}{2}(u_1u_2)\big\|_{L_{t,x}^2([-T,T]\times\R^3)}\leq C\|u_1\|_{X_a^{\sigma,\frac12+}([-T,T])}\|u_2\|_{X_a^{\rho+\frac12-\sigma,\frac12+}([-T,T])}\label{high-B}
		\end{gather}
		where $X_a^{s,b}([-T,T])$ is the restricted Bourgain space.
	\end{proposition}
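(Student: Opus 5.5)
Fix $f,g$ and split both into Littlewood--Paley pieces, $f=\sum_N P_N^af$, $g=\sum_M P_M^ag$. For each pair of dyadic frequencies we use Theorem~\ref{Theorem- bilinear estimate} (equivalently Theorem~\ref{thm1}) for $L^2$ control of the product, and Lemma~\ref{le-improved bilinear estimates}, extended to all dyadic frequencies by the dilation argument, for $\dot H_a^1$ control. Interpolating spectrally between the two gives, for $0\le\rho\le1$,
\[
\big\|\La^{\frac\rho2}(u_Nv_M)\big\|_{L^2_{t,x}}\lesssim \min(M,N)\,\max(M,N)^{\rho-\frac12}\,\|P_N^af\|_2\|P_M^ag\|_2 ,
\]
where $u_N=e^{it\La}P_N^af$ and $v_M=e^{it\La}P_M^ag$. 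The interpolation is the inequality $\|\La^{\rho/2}h\|_2\le\|h\|_2^{1-\rho}\|\La^{1/2}h\|_2^{\rho}$, applied for each $t$ and followed by Hölder in $t$. Both bounds are symmetric under $M\leftrightarrow N$, so the estimate holds whichever frequency is larger.

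\textbf{Low case $\rho<\tfrac12$, estimate \eqref{low}.} We split the double sum into $M\le N$ and $M>N$.
\begin{itemize}
\item For $M\le N$ the dyadic factor is $MN^{\rho-\frac12}$. Write it as $(M/N)^{\frac12-\rho}\cdot M^{\frac12+\rho}$ up to rearrangement; more directly, $MN^{\rho-1/2}=N^{\frac12+\rho}(M/N)$. Put $N^{\frac12+\rho}$ on $f$ and keep the geometric factor $M/N$. Schur's test, or Cauchy--Schwarz with $\sum_{M\le N}(M/N)^{\epsilon}<\infty$, then sums to $\|f\|_{\dot H_a^{1/2+\rho}}\|g\|_2$.
\item For $M>N$ the factor is $NM^{\rho-\frac12}=N^{\frac12+\rho}(N/M)^{\frac12-\rho}$. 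Since $\rho<\tfrac12$, this is again geometric and Schur's test applies.
\end{itemize}
This is exactly where $\rho<\tfrac12$ is used.

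To pass from the dyadic pieces to the full product we use the triangle inequality in $L^2_{t,x}$ over the pairs $(N,M)$ and then Schur's test. Square-function arguments are avoided, because the $L^p$ Littlewood--Paley theory for $\La$ is limited.

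\textbf{High case $\tfrac12\le\rho<\tfrac12+\sigma$, estimate \eqref{high}.} Here we distribute the weights differently.
\begin{itemize}
\item For $M\le N$ (high frequency on $f$) we have $MN^{\rho-\frac12}=N^{\sigma}\,M^{\rho+\frac12-\sigma}\,(M/N)^{\sigma+\frac12-\rho}$. The exponent $\sigma+\tfrac12-\rho$ is positive, so the sum is geometric.
\item For $M>N$ we have $NM^{\rho-\frac12}=N^{\sigma}M^{\rho+\frac12-\sigma}(N/M)^{1-\sigma}$. This sum is geometric when $\sigma<1$.
\end{itemize}
The endpoint $\sigma=1$ is the main obstacle: the $M>N$ sum is then only a diagonal sum. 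I would handle it by first using the $\dot H_a^1$ bound with the derivative on the $L^2$-function side, or by noting that for $\sigma=1$ the factor $N^{\sigma}$ already absorbs $N$ exactly. Summing over $N<M$ with the fixed weight $N^{1}M^{\rho-\frac12}$ and applying Cauchy--Schwarz in $N$ then leaves $\sum_{N<M}N\|P_N^af\|_2$, which is bounded by $\|f\|_{\dot H^1_a}$ only with a logarithmic loss. So at $\sigma=1$ I would either accept a restriction to $\sigma<1$ or exploit the strict inequality $\rho<\tfrac12+\sigma$ to trade a small power.

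\textbf{Bourgain-space versions \eqref{low-B} and \eqref{high-B}.} These follow from the linear estimates by the standard transference principle. Write $u_j(t)=\int e^{it\tau}e^{it\La}F_j(\tau)\,d\tau$ with $\widehat{F_j}$ coming from the distorted Fourier representation. Apply \eqref{low} or \eqref{high} for each fixed pair $(\tau_1,\tau_2)$; the phase $e^{it(\tau_1+\tau_2)}$ does not affect the $L^2_{t,x}$ norm. Then integrate in $(\tau_1,\tau_2)$ using Cauchy--Schwarz with $\int\langle\tau\rangle^{-1-}\,d\tau<\infty$, which is where $b=\tfrac12+$ is needed. Restricting to $[-T,T]$ is done by taking extensions and the infimum over them.
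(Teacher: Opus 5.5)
Your route is the paper's: decompose with $P^a_N$, apply the triangle inequality over pairs $(N,M)$, use the dyadic bilinear bound, sum, and transfer to $X^{s,b}$. In two places your version is better than the paper's.
\begin{itemize}
\item The paper asserts $\|\La^{\rho/2}(u_Nv_M)\|_{L^2_{t,x}}\lesssim\max(N,M)^{\rho}\|u_Nv_M\|_{L^2_{t,x}}$. This would need the product to be spectrally localized for $\La$, which is not available. Your spectral H\"older interpolation between Theorem~\ref{Theorem- bilinear estimate} and the rescaled Lemma~\ref{le-improved bilinear estimates} is a valid replacement.
\item The paper discards the off-diagonal factors and ends with $\sum_{N,M}N^{\rho+\frac12}\|P^a_Nf\|_2\|P^a_Mg\|_2$. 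This is a product of $\ell^1$ sums, which $\|f\|_{\dot H_a^{1/2+\rho}}\|g\|_2$ does not control. Keeping $M/N$ and $(N/M)^{\frac12-\rho}$ and applying Schur's test is what is actually needed.
\end{itemize}
So your argument proves \eqref{low} and \eqref{low-B}, and it proves \eqref{high} and \eqref{high-B} for $\sigma<1$, $\rho\le1$.

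The rest of the stated range is not covered, and there is a silent gap. Your interpolation holds only for $0\le\rho\le1$, but the high case allows $\rho<\frac12+\sigma$, which exceeds $1$ once $\sigma>\frac12$. For $1<\rho<\frac12+\sigma$ you use the dyadic factor $\min(M,N)\max(M,N)^{\rho-\frac12}$ without proof. Closing this needs a bilinear bound above the $\dot H^1_a$ level, for instance for
$\La(u_Nv_M)=(\La u_N)v_M+u_N\La v_M-2\nabla u_N\cdot\nabla v_M-a|x|^{-2}u_Nv_M$.
Its last two terms are not covered by Lemma~\ref{le-improved bilinear estimates}, and the paper's ``arguing like above'' supplies nothing here either. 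The only later use (Step 3 of Section 4) has $\sigma=\frac12$ and exponent $1-\rho\le1$, so the natural fix is to restrict to $\rho\le1$.

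At $\sigma=1$ your diagnosis is correct, but the proposed rescue cannot work. For $N<M$ the leftover factor $(N/M)^{1-\sigma}$ does not involve $\rho$, so the strict inequality $\rho<\frac12+\sigma$ gains nothing. In fact no rearrangement can work, because the endpoint is false already for $a=0$. By scaling, restricting to $[-T,T]$ does not help, since a bound on one interval gives the same bound on all intervals.

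Counterexample at $a=0$:
\begin{itemize}
\item Take $\widehat g=\mathbf 1_{B(Me_1,1)}$ and $\widehat f(\eta)=|\eta|^{-2}\mathbf 1_{\{|\eta_1|<\epsilon,\ \epsilon<|\eta|<1/2\}}$ with $M^{-1}\ll\epsilon\ll1$. Then $\|f\|_{\dot H^1}^2\sim\epsilon\log(1/\epsilon)$ and $\|g\|_{\dot H^{\rho-1/2}}^2\sim M^{2\rho-1}$.
\item Let $\xi\in B(Me_1,\frac12)$ and let $\tau$ lie within $M\epsilon/2$ of $|\xi|^2$. The resonant sphere $\{\eta:|\eta|^2+|\xi-\eta|^2=\tau\}$ (up to sign conventions for $\tau$) meets $\{|\eta|<\frac12\}$ in a nearly flat sheet inside the slab $\{|\eta_1|<\epsilon\}$. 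The phase gradient there has size $\sim M$.
\item Hence the space--time Fourier transform of $uv$ at such $(\tau,\xi)$ is $\gtrsim M^{-1}\int_{\epsilon<|\eta'|<1/4}(\epsilon^2+|\eta'|^2)^{-1}\,d\eta'\sim M^{-1}\log(1/\epsilon)$.
\item Integrating over this set gives $\||\nabla|^{\rho}(uv)\|_{L^2_{t,x}}^2\gtrsim M^{2\rho-1}\epsilon\log^2(1/\epsilon)$. This beats $\|f\|_{\dot H^1}^2\|g\|_{\dot H^{\rho-1/2}}^2$ by a factor $\log(1/\epsilon)$.
\end{itemize}
So $\sigma<1$ is a necessary correction to the statement, not a defect of your method.
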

	\begin{proof}
		First, we prove \eqref{low} and \eqref{high}. The remaining two estimates can be obtained by using the transfer principle as in \cite{BGT-Invent}. Throughout the proof, we use the notation $I=[-T,T]$ for short. Using the dyadic decomposition adapted to $\La$,
		\begin{align*}
			f=\sum_{N\in2^{\Bbb Z}}f_N:=\sum_{N\in2^{\Z}}P_N^af,\,\,		g=\sum_{N\in2^{\Bbb Z}}g_N:=\sum_{N\in2^{\Z}}P_N^ag,
		\end{align*}
		where $P_N^af:=\mathcal{F}_a^{-1}(\widehat{\psi}_N(\xi)\mathcal{F}_af(\cdot))(x)$ with $\supp\widehat{\psi}_N\subset \{\xi:|\xi|\sim N\}$. Hence, we have
		\begin{align*}
			&\hspace{5ex}	\big\|\La^\frac{\rho}{2}(e^{it\La}fe^{it\La}g)\big\|_{L_{t,x}^2(I\times\R^3)}
			\lesssim \sum_{N,M\in2^{\Bbb Z}}\big\|\La^\frac{\rho}{2}(e^{it\La}f_Ne^{it\La}g_M)\big\|_{L_{t,x}^2(I\times\R^3)}\\
			&\lesssim\sum_{N\in2^{\Bbb Z}}\sum_{M\leq N}N^\rho\big\|e^{it\La}f_Ne^{it\La}g_M\big\|_{L_{t,x}^2(I\times\R^3)}+\sum_{M\in2^{\Bbb Z}}\sum_{N\leq M}M^\rho\big\|e^{it\La}f_Ne^{it\La}g_M\big\|_{L_{t,x}^2(I\times\R^3)}.
		\end{align*}
		Note that
		\[
		M^\rho \cdot \frac{N}{M^\frac 12}\leq N^{\rho+\frac12}, \quad N \leq M, \quad 0 \leq \rho < \frac{1}{2},
		\]
		and
		\[
		N^{\rho} \cdot \frac{M}{N^\frac12} \leq N^{\rho+\frac{1}{2}}, \quad M \leq N, \quad 0 \leq \rho < \frac{1}{2}.
		\]
		Combining the above two bounds with Bernstein's inequality, one has
		\begin{align*}
			\big\|\La^\frac{\rho}{2}(e^{it\La}fe^{it\La}g)\big\|_{L_{t,x}^2(I\times\R^3)}&\lesssim\sum_{N,M\in2^{\Z}}N^{\rho+\frac12}\|f_N\|_{L^2(\R^3)}\|g_M\|_{L^2(\R^3)}\\
			&\leq C\|f\|_{\dot H_a^{\rho+\frac12}(\R^3)}\|g\|_{L^2(\R^3)},
		\end{align*}
		which completes the proof of \eqref{low}.
		Arguing like above and using the condition $\frac12\leq \rho<\frac12+\sigma$, we can prove the desired estimate \eqref{high}.
	\end{proof}
	\section{High-low frequency decomposition}
	In this section, we show the global well-posedness for \eqref{NLS} by adapting Bourgain's high-low decomposition argument to our setting.
	
	\noindent\textbf{Step 1.} (Decomposing the initial data).For initial data $u_0\in H_a^s(\R^d)$, we decompose it into
	\begin{align*}
		u_0=\phi+\psi:=P_{\leq N}^au_0+(I-P_{\leq N}^a)u_0.
	\end{align*}
	Using the distorted Fourier transform, it is not difficult to verify that
	\begin{align*}
		(\mathcal{F}_au_0)(\xi)=(\mathcal{F}_a\phi)(\xi)+(\mathcal{F}_a\psi)(\xi)=\chi_{|\xi|\leq N_0}\mathcal{F}_au_0+(1-\chi_{|\xi|\leq N_0})\mathcal{F}_au_0.
	\end{align*}
	By the Bernstein inequality and Plancherel theorem, we have
	\begin{align*}
		\big\|\phi(x)\|_{H_a^1(\R^d)}&=\|\phi\|_{L_x^2}+\|\xi\mathcal{F}_a\phi\|_{L_\xi^2}\lesssim N_0^{1-s},\quad 0<s<1.
	\end{align*}
	Using the Sobolev embedding $H_a^s\hookrightarrow L^4$ with $s\geq\frac{3}{4}$, we claim that
	\begin{align*}
		E_a(\phi)=\frac{1}{2}\int_{\R^3}|\nabla\phi(x)|^2+\frac{a}{|x|^2}|\phi(x)|^2+\frac{1}{4}\int_{\R^3}|\phi(x)|^4dx\lesssim N_0^{2(1-s)}.
	\end{align*}
	
	\noindent\textbf{Step 2.} (Estimate of low frequency evolution on $I=[0,\Delta T]$) In this step, we consider the cubic NLS with  initial data $\phi(x)$,
	\begin{align}\label{low-NLS}
		\begin{cases}
			i\partial_tu_1+\mathcal L_au_1=-|u_1|^2u_1,\\
			u_1(0,x)=\phi(x)\in H_a^1(\R^3).
		\end{cases}
	\end{align}
	By the classical Picard iteration, one can easily show that  \eqref{low-NLS} admits a local and strong solution $u_1(t)\in C(I,H_a^1(\R^3))\cap C^1(I,L^2(\R^3))$, where $I=[0,\Delta T]$ and $|I|\sim N_0^{-(1-s)\frac{9}{2}}$. Indeed,  by using the dispersive estimate and Sobolev embedding, one has
	\begin{align*}
		\|e^{-it\mathcal L_a}\phi\|_{L_{t,x}^5(I\times\R^3)}&\lesssim\big\|\La^\frac{3}{20}(e^{-it\La}\phi)\big\|_{L_t^5(I,L_x^\frac{10}{3}(\R^3))}\\
		&\lesssim|I|^\frac15\sup_{t\in I}\big\|\La^\frac{3}{20}(e^{-it\La}\phi)\big\|_{L_x^\frac{10}3(\R^3)}\lesssim |I|^\frac15\|\phi\|_{\dot H_a^\frac 9{10}(\R^3)}<|I|^\frac15 N_0^{\frac{9}{10}(1-s)},
	\end{align*}
	where we use the equivalence of Sobolev norms. Taking $|I|^\frac15=N_0^{\frac{9}{10}(1-s)-}$, we have that
	\begin{align}\label{linear-o(1)}
		\|\La^\frac{3}{20}(e^{-it\La}\phi(x))\|_{L_t^5(I,L_x^\frac{10}{3}(\R^3))}=o(1).
	\end{align}
	Therefore, by Sobolev embedding,
	\begin{align*}
		\|u_1(t)\|_{L_{t,x}^5(I\times\R^3)}\lesssim\|\La^\frac{3}{20}u_1(t)\|_{L_t^5(I,L_x^{\frac{10}{3}}(\R^3))},
	\end{align*}
	and by \eqref{linear-o(1)}, dispersive estimate and Hardy-Littlewood-Sobolev inequality, we get
	\begin{align*}
		\|\La^\frac{3}{20}u_1(t)\|_{L_t^5(I,L_x^{\frac{10}{3}}(\R^3))}&\lesssim o(1)+\Big\|\int_{0}^{t}|t-s|^{-\frac35}\||\nabla|^\frac{3}{10}u_1(t)\|_{L_x^\frac{10}{3}(\R^3)}\|u_1(t)\|_{L_x^5(\R^3)}^2ds\Big\|_{L_t^5}\\
		&\lesssim o(1)+\Big\|\int_{0}^{t}|t-s|^{-\frac35}\|\La^\frac{3}{20}u_1(t)\|_{L_x^\frac{10}{3}(\R^3)}\|u_1(t)\|_{L_x^5(\R^3)}^2ds\Big\|_{L_t^5}\\
		&\lesssim o(1)+\|\La^\frac{3}{20}u_1(t)\|_{L_t^5L_x^\frac{10}3(I\times\R^3)}\|u_1(t)\|_{L_{t,x}^5(I\times\R^3)}^2\\
		&\lesssim o(1)+\|\La^\frac{3}{20}u_1(t)\|_{L_t^5L_x^\frac{10}{3}(I\times\R^3)}^3.
	\end{align*}
	By continuity method, if we take $|I|$ sufficiently small, we  can bound this quantity by $o(1)$.
	
	Next, we show that the following $X^{s,b}$ bounds hold
	\begin{align}\label{claim}
		\|u_1(t)\|_{X_{a}^{0,\frac12+}(I)}<C,\quad \|u_1(t)\|_{X_a^{1,\frac12+}(I)}<N_0^{1-s}
	\end{align}
	for some positive constant $C>0$. From the Strichartz estimate,
	\begin{align*}
		\big\|e^{\pm it\mathcal L_a}f\big\|_{L_t^qL_x^r(I\times\R^3)}\lesssim\|f\|_{L^2(\R^3)},
	\end{align*}
	and Sobolev embedding in time $t$, we obtain
	\begin{align}\label{bourgain-1}
		\|F(t,x)\|_{L_t^qL_x^r(I\times\R^3)}&=\big\|e^{it\La}e^{-it\La}F(t,x)\big\|_{L_t^qL_x^r(I\times\R^3)}\lesssim \sup_{t\in I} \|e^{-it\mathcal{L}_a}F(t,x)\|_{L^2(\R^3)}\notag\\
		&\lesssim\|F(t,x)\|_{X_a^{0,\frac12+}(I)}.
	\end{align}
	By duality, we also have
	\begin{align*}
		\|F\|_{X_a^{0,-\frac12-}}\lesssim\|F(t,x)\|_{L_t^{q^\prime}L_x^{r^\prime}(I\times\R^3)},\quad\forall (q,r)\in\Lambda_0.
	\end{align*}
	Then interpolating with $\|F\|_{X_a^{0,0}}=\|F\|_{L_{t,x}^2}$, one has
	\begin{align*}
		\|F\|_{X_a^{0,-\frac12+}}\lesssim\|F(t,x)\|_{L_{t,x}^{\frac{10}{7}+}(I\times\R^3)},\quad\forall (q,r)\in\Lambda_0.
	\end{align*}
	Let $\L\in\{\La^\frac12,\langle\La\rangle^\frac12\}$, the analogue of \eqref{bourgain-1} holds
	\begin{align*}
		\big\|\L^\rho F(t,x)\big\|_{L_t^qL_x^r(I\times\R^3)}&=\big\|e^{it\La}\L^\rho e^{-it\La}F\big\|_{L_t^qL_x^r(I\times\R^3)}\lesssim  \sup_{t\in I}\big\|\L^\rho e^{-it\La}F\|_{L_x^2(\R^3)}\\
		&\lesssim\|F\|_{X_a^{\rho,\frac12+}}.
	\end{align*}
	Thanks to the Sobolev embedding, we actually have the following general estimates
	\begin{align*}
		\|F(t,x)\|_{L_t^{\tilde{q}}L_x^{\tilde{r}}(I\times\R^3)}\lesssim\|F\|_{X^{\rho,\frac12+}_a},\quad\frac{2}{\tilde q}=3\big(\frac12-\frac{\rho}{3}-\frac1{\tilde{r}}\big),\,\,\rho\in[0,1].
	\end{align*}
	Specifically, we have
	\begin{align}\label{Strichartz-Xsb}
		\|F\|_{L_{t,x}^5(I\times\R^3)}\lesssim\|F\|_{X_a^{\frac12,\frac12+}(I)},\,\,\|F\|_{L_{t,x}^{10}(I\times\R^3)}\lesssim\|F\|_{X_a^{1,\frac12+}(I)}
	\end{align}
	By the dual  Strichartz estimate in Bourgain space and the Gagliardo-Nirenberg inequality, one has
	\begin{align*}
		\big\|u_1(t)\big\|_{X_a^{1,\frac12+}(I)}&\lesssim\|\phi\|_{H_a^1(\R^3)}+\||u_1|^2\nabla u_1(t)\|_{X_a^{0,-\frac12+}(I)}\\
		&\lesssim\|\phi\|_{H_a^1(\R^3)}+\||u_1|^2\nabla u_1\|_{L_{t,x}^{\frac{10}{7}+}(I\times\R^3)}\\
		&\lesssim\|\phi\|_{H_a^1(\R^3)}+\|u_1\|_{L_{t,x}^5(I\times\R^3)}\|u_1\|_{L_{t,x}^{5+}(I\times\R^3)}\|\mathcal L_a^\frac12u_1\|_{L_{t,x}^{\frac{10}{3}}(I\times\R^3)}\\
		&\lesssim N_0^{1-s}+\|u_1(t)\|_{L_{t,x}^5(I\times\R^3)}^{2-\theta}\|u_1(t)\|_{L_{t,x}^{10}(I\times\R^3)}^\theta\|\La^\frac12 u_1(t)\|_{L_{t,x}^{\frac{10}{3}}(I\times\R^3)}\\
		&\lesssim N_0^{1-s}+o(1)\|u_1(t)\|_{X_a^{1,\frac12+}}^{1+\theta}
	\end{align*}
	for some $\theta\in(0,1)$. Similarly, by using the Strichartz estimate, H\"older's inequality and Bernstein, we claim that
	\begin{align*}
		\|u_1(t)\|_{X_a^{0,\frac12+}(I)}\lesssim \|\phi\|_{L^2}+o(1)\|u_1(t)\|_{X_a^{0,\frac12+}(I)}.
	\end{align*}
	Hence, we have proved two estimates in \eqref{claim}.
	
	\noindent\textbf{Step 3.} (A priori bound of the high-frequency evolution) Let $u(t)=u_1(t)+u_2(t),$ then $u_2(t)$ satisfies the difference evolution equation
	\begin{align*}
		\begin{cases}
			i\partial_tu_2+\mathcal{L}_au_2=-\big(2|u_1|^2u_2+|u_2|^2u_2+2u_1^2\overline{u_2}+2\overline{u_1}u_2^2+2u_1|u_2|^2\big),\\
			u_2(0,x)=\psi(x).
		\end{cases}
	\end{align*}
	Using the Duhamel formula, it can be represented as
	\begin{align*}
		u_2(t)=e^{it\mathcal{L}_a}\psi(x)+w(t).
	\end{align*}
	Notice that
	\begin{align*}
		\|\psi\|_{H_a^s(\R^3)}\lesssim C,\,\,\|\psi\|_{L^2(\R^3)}\lesssim N_0^{-s},
	\end{align*}
	combining with \eqref{Strichartz-Xsb} and H\"older's inequality, the following estimates hold for $s>\frac12$,
	\begin{align}\label{key-1}
		\|u_2(t)\|_{X_a^{0,\frac12+}(I)}&\lesssim\|\psi\|_{L^2}+\big\|\||u_2|^2u_2+2|u_1|^2u_2+2u_1^2\overline{u_2}+2\overline{u_1}u_2^2+2u_1|u_2|^2\big\|_{X_a^{0,-\frac12+}(I)}\\
		&\lesssim\|\psi\|_{L^2(\R^3)}+\|u_2\|_{X_a^{0,\frac12+}(I)}(\|u_2\|_{X_a^{\frac12,\frac12+}(I)}^2+o(1)+o(1)\|u_2\|_{X_a^{\frac12,\frac12+}(I)})
	\end{align}
	\begin{align}\label{key-2}
		\|u_2(t)\|_{X_a^{s,\frac12+}(I)}&\lesssim\|\psi\|_{H_a^s}+\big\|\||u_2|^2u_2+2|u_1|^2u_2+2u_1^2\overline{u_2}+2\overline{u_1}u_2^2+2u_1|u_2|^2\big\|_{X_a^{s,-\frac12+}(I)}\\
		&\lesssim\|\psi\|_{H_a^s(\R^3)}+\|u_2\|_{X_a^{s,\frac12+}(I)}(\|u_2\|_{X_a^{\frac12,\frac12+}(I)}^2+o(1)+o(1)\|u_2\|_{X_a^{\frac12,\frac12+}(I)})\\
		&\hspace{2ex}+\|u_1\|_{X_a^{s,\frac12+}(I)}(o(1)\|u_2\|_{X_a^{\frac12,\frac12+}(I)}+\|u_2\|_{X_a^{\frac12,\frac12+}(I)}).
	\end{align}
	By interpolation, we have
	\begin{align*}
		\|u_1\|_{X_a^{s,\frac12+}(I)}\lesssim\|u_1\|_{X_a^{0,\frac12+}(I)}^{1-a}\|u_1\|_{X_a^{1,\frac12+}(I)}^a\lesssim N_0^{s(1-s)},
	\end{align*}
	where $a\in(0,1)$. Then invoking the above bound in \eqref{key-1} and \eqref{key-2}, we have
	\begin{align*}
		\|u_2\|_{X_a^{0,\frac12+}}\leq C,\,\,\|u_2\|_{X_a^{s,\frac12+}}\leq C.
	\end{align*}
	
	Next, we begin to estimate the energy increment of the difference equation since it is not  Hamiltonian. By the direct calculation and the equivalent of Sobolev norms,
	\begin{align*}
		\|\La^\frac12w(t)\|_{L_t^\infty L_x^2(I\times\R^3)}
		&\leq \|\La^\frac12w(t)\|_{X_a^{0,\frac12+}(I)}\\
		&\leq \sup_{\|W\|_{X_a^{0,\frac12+}}\leq 1}\int_{\R}(W,\La^\frac12(2|u_1|^2u_2+|u_2|^2u_2+2u_1^2\overline{u_2}+2\overline{u_1}u_2^2+2u_1|u_2|^2))\,ds\\
	\end{align*}
	Then by Plancherel's theorem, we have
	\begin{align*}
		(f,|\nabla| g)&=\int_{\R^3}f\overline{|\nabla| g}\,dx=\int_{\R^3}\widehat{f}\cdot|\xi|\overline{\widehat{g}}\,d\xi\\
		&=\int_{\R^3}|\xi|^\rho\widehat{f}\cdot|\xi|^{1-\rho}\overline{\widehat{g}}\,d\xi=(|\nabla|^\rho f,|\nabla|^{1-\rho} g).
	\end{align*}
	For convenience, we only estimate the nonlinear term $|u_1|^2u_2$.
	The basic computation yields
	\begin{align*}
		\|\La^\frac{1}{2}(|u_1|^2u_2)\|_{X_a^{0,-\frac12-}}&\lesssim	\|\nabla(|u_1|^2u_2)\|_{X_a^{0,-\frac12-}}\\
		&=\sup_{\|G\|_{X^{0,\frac12+}(I)}\lesssim1}|(G,u_1u_2\nabla u_1+u_1^2\nabla u_2)|\\
		&\lesssim\int_{I\times\R^3}|G||u_1||u_2|\nabla u_1|\,dx\,dt+
		|(\nabla(u_1u_2),u_1G)|\\
		&\lesssim\|G\|_{X^{0,\frac12+}(I)}\|u_1(t)\|_{X_a^{1,\frac12+}(I)}^2\|u_2(t)\|_{X_a^{0,\frac12+}(I)}+\int_{I\times\R^3}||\nabla|^\rho(u_1G)||\nabla|^{1-\rho}(u_2u_1)|\,dx\,dt\\
		&\lesssim N_0^{2-3s}+\||\nabla|^\rho(u_1G)\|_{L_{t,x}^2}\||\nabla|^{1-\rho}(u_1u_2)\|_{L_{t,x}^2}.
	\end{align*}
	Using the equivalence of Sobolev norm and bilinear Strichartz estimate, we have
	\begin{align*}
		\||\nabla|^\rho(u_1G)\|_{L_{t,x}^2}\||\nabla|^{1-\rho}(u_1u_2)\|_{L_{t,x}^2} &\leq \| \La^{\frac{\rho}{2}} ( u_1G) \|_{L_{t,x}^2} \| \La^{\frac{1-\rho}{2}}(u_1u_2) \|_{L_{t,x}^2} \\
		&\leq \| u_1 \|_{X_a^{\frac{1}{2}+\rho,\frac{1}{2}+}} \| u_2 \|_{X_a^{\frac{1}{2}-\rho,\frac{1}{2}+}} \| u_1 \|_{X_a^{\frac{1}{2},\frac{1}{2}+}} \\
		&\leq N_0^{2-3s+},
	\end{align*}
	where we complete the estimate of $\La(|u_1|^2u_2)$. As a conclusion, if $s>\frac23$, we have
	\begin{align*}
		\sup_{t\in I}\|w(t)\|_{H_a^1(\R^3)}\lesssim N_0^{2-3s+}.
	\end{align*}
	
	\noindent\textbf{Step 4.} (Iteration Scheme) Consider $t_1=\Delta T$, we have
	\begin{align*}
		u(t_1)=u_1(t_1)+e^{it_1\La}\psi(x)+w(t_1)\stackrel{\triangle}{=}\phi_1(t_1)+\psi_1(t_1),
	\end{align*}
	where
	\begin{align*}
		\phi_1(t_1):=u_1(t_1)+w(t_1),\,\,	\psi_1(t_1):=e^{it_1\La}\psi(x).
	\end{align*}
	Thus, the pair $(\phi, \psi)$ can be replaced by $(\phi_1(t_1), \psi_1(t_1))$ to re-iterate above steps. The Hamiltonian increment when replacing $\phi$ by $\phi_1(t_1)$ can be bounded by
	
	\begin{align*}
		\left|E_a(\phi_1(t_1)) - E_a(\phi_0)\right| &= \left|E_a(u_1(t_1) + \omega(t_1)) - E_a(u_1(t_1))\right| \\
		&\leq \left(\|u_1(t_1)\|_{H_a^1} + \|\omega(t_1)\|_{H_a^1}\right) \|\omega(t_1)\|_{H_a^1} \\
		&\quad + \left(\|u_1(t_1)\|_{L^6(\R^3)} + \|\omega(t_1)\|_{L^6(\R^3)}\right)^3 \|\omega(t_1)\|_{L^2(\R^3)} \\
		&\leq N_0^{1-s} N_0^{2-3s+} + N_0^{3(1-s)-s} \\
		&\leq N_0^{3-4s+},
	\end{align*}
	The energy increment between $\psi_1(t_1)$ and $\psi$ can be estimated in a similar way. Then, in order to extend the solution to any time $T > 0$, we need iterate $k$
	times where
	\begin{align*}
		k\sim \frac{T}{\Delta T}\sim TN_0^{\frac{9}{2}(1-s)-}.
	\end{align*}
	This leads to the restriction condition on regularity of initial data
	\begin{align*}
		TN_0^{\frac{9}{2}(1-s)-}N_0^{3-4s+}
		<E_a(\phi) \sim N_0^{2(1-s)},
	\end{align*}
	which is equivalent to $TN_0^{\frac{11-13s}{2}+}<1.$ Hence, if
	$s >\frac{11}{13}$,
	we can continue the iteration on $[0, T]$ and maintain the increment
	$N_0 := N_0(T) = T^{\frac{ 2}
		{13s-11} +}.$

	\vspace{2ex}
	\noindent\textbf{Statements and Declarations:} The authors have no relevant financial or non-financial interests to disclose. Also the authors do not use any LLM. 
	
	\noindent\textbf{Data availability:} Data sharing is not applicable to this article as no datasets were generated or analyzed during the current study.
	
	\noindent\textbf{Conflicts of interests:} On behalf of all authors, the corresponding author states that there is no conflict of interest.

\end{document}